\renewcommand\le{\leqslant}
\renewcommand\ge{\geqslant}
\renewcommand\to{\rightarrow}
	\def\<{\langle }
	\def\>{\rangle }
\newtheorem{theor}{Theorem}[section]
\newtheorem{prop}[theor]{Proposition}
\newtheorem{lemma}[theor]{Lemma}
\newtheorem{coro}[theor]{Corollary}
\newtheorem{conj}[theor]{Conjecture}
\theoremstyle{definition}
\newtheorem{defi}[theor]{Definition}
\theoremstyle{remark}
\newtheorem{rema}[theor]{Remark}
\newtheorem{exam}[theor]{Example}
\newtheorem{nota}[theor]{Notation}
\newtheorem*{claim*}{Claim}
\newtheorem*{qu*}{Question}
\newcommand{\genA}{[A]}
\newcommand{\aab}{m}
\newcommand{\Zdd}{\mathbb Z^2}
\newcommand{\Zdt}{\mathbb Z^3}
\newcommand{\Zd}{\mathbb Z^d}
\newcommand{\pp}{\mathbb P}
\newcommand{\p}{\mathbb P}
\newcommand{\U}{\mathcal U}
\newcommand{\UU}{\mathcal U}
\newcommand{\dhp}{\mathbb H_u}
\newcommand{\Ss}{\mathcal{S}}
\begin{document}

\title{Three-dimensional 2-critical bootstrap percolation: The stable sets approach}
\author{Daniel Blanquicett}

\address{Mathematics Department,
University of California, Davis, CA 95616, USA}
\email{drbt@math.ucdavis.edu}
\thanks{{\it Date}: January 26, 2022.\\
\indent 2010 {\it Mathematics Subject Classification.}  Primary 60K35; Secondary 60C05.\\
\indent {\it Key words and phrases.}  Anisotropic bootstrap percolation, Beams process.}
	
\begin{abstract}
Consider a $p$-random subset $A$ of initially infected vertices in the discrete cube $[L]^3$,
and assume that the neighbourhood of each vertex consists of the $a_i$ nearest neighbours
in the $\pm e_i$-directions for each
$i \in \{1,2,3\}$, where $a_1\le a_2\le a_3$.
Suppose we infect any healthy vertex $v\in [L]^3$ already having $r$ infected neighbours,
and that infected sites remain infected forever.
In this paper we determine $\log$ of the critical length for percolation up to a constant factor, 
for all $r\in \{a_3+1,
\dots, a_3+a_2\}$ with $a_3\ge a_1+a_2$. 
We moreover give upper bounds for all remaining cases $a_3 < a_1+a_2$ 
and believe that they are tight up to a constant factor.
\end{abstract}
	
\maketitle 
\section{Introduction}
The study of bootstrap processes on graphs was initiated in 1979 by Chalupa,
Leath and Reich~\cite{ChLR79}, and is motivated by problems arising from statistical physics, such as the Glauber dynamics of 
the zero-temperature Ising model, and kinetically constrained spin models of the liquid-glass transition 
(see, e.g.,~\cite{DB21,FSS02,Morris09,MMT18,Morris17}). 
The $r$-neighbour bootstrap process on a locally finite graph $G$ is a monotone cellular automata on the 
configuration space $\{0,1\}^{V(G)}$, (we call vertices in state $1$ ``infected"), evolving in discrete time
in the following way: $0$ becomes $1$ when it has at least $r$ neighbours in state $1$, and infected vertices remain infected forever.
Throughout this paper, $A$ denotes the initially infected set, and we write $\genA=G$
if the state of each vertex is eventually 1.

We will focus on \emph{anisotropic} bootstrap models, which are $d$-dimensional analogues of a family of
(two-dimensional) processes studied by Duminil-Copin, van Enter and Hulshof \cite{EH07,DCE13,DEH18}.
In these models the graph $G$ has
vertex set $[L]^d$, and the neighbourhood of each vertex consists of the $a_i$ nearest neighbours in the
$-e_i$ and $e_i$-directions for each $i \in [d]$,
where $a_1\le \cdots\le a_d$ and $e_i\in\Zd$ denotes the $i$-th canonical unit vector.
In other words, $u,v\in [L]^d$ are neighbours if (see Figure \ref{figanis3d} for $d=3$)
\begin{align}\label{neigh3} 
u-v\in N_{a_1,\dots,a_d}:=\{\pm e_1,\dots, \pm a_1e_1\}\cup \cdots \cup \{\pm e_d,\dots, \pm a_de_d\}.
\end{align}
We also call this process the $\mathcal N_r^{a_1,\dots, a_d}$-{\it model}.
Our initially infected set $A$ 
is chosen according to the Bernoulli product measure $\p_p=\bigotimes_{v\in [L]^d}$Ber$(p)$,
and we are interested in the so-called {\it critical length for percolation},
for small values of $p$
\begin{equation}\label{criticalL}
 L_c(\mathcal N_r^{a_1,\dots,a_d},p):= \min\{L\in\mathbbm N: \pp_p(\genA=[L]^d
 )\ge 1/2\}.
 \end{equation}

 
The analysis of these bootstrap processes for $a_1=\cdots= a_d=1$ was initiated by Aizenman and Lebowitz~\cite{AL88} in 1988,
who determined the magnitude of the critical length 
up to a constant factor in the exponent for the $\mathcal N_2^{1,\dots,1}$-model (in other words, they determined the
`metastability threshold' for percolation). In the case $d = 2$, Holroyd~\cite{H03} determined (asymptotically, as $p \to 0$) the constant in the exponent  (this is usually called a sharp metastability threshold).

For the general $\mathcal N_r^{1,\dots,1}$-model with $2\le r\le d$, the threshold was determined by Cerf and Cirillo \cite{CC99} and Cerf
and Manzo \cite{CM02}, and the sharp threshold by Balogh, Bollob\'as and Morris \cite{BBM09}
and Balogh, Bollob\'as, Duminil-Copin and Morris  \cite{BBDM12}: for all $d\ge r\ge 2$ there exists a computable constant 
$\lambda(d,r)$ such that, as $p\to 0$,
\begin{equation*}
 L_c(\mathcal N_r^{1,\dots,1},p) = \exp_{(r-1)}\bigg(\frac{\lambda(d,r) + o(1)}{p^{1/(d-r+1)}}\bigg).
\end{equation*}

In dimension $d=2$, we write $a_1=a, a_2=b$, and the $\mathcal N_r^{a,b}$-model is called isotropic when $a=b$ and anisotropic when $a<b$.
Hulshof and van Enter \cite{EH07} determined the threshold for the first interesting anisotropic model given by the family $\mathcal N_{3}^{1,2}$, and the 
corresponding sharp threshold was determined by Duminil-Copin and van Enter  \cite{DCE13}.

The threshold was also determined in the general case $r=a+b$ by van Enter and Fey 
 \cite{AA12} and the proof can be extended to all $b+1\le r\le a+b$: as $p\to 0$,
\begin{equation}\label{paso1}
\log L_c\left(\mathcal N_{r}^{a,b},p\right)=
\begin{cases}
\Theta\left(p^{-(r-b)}\right) & \textup{if }b=a,\\
\Theta\left(p^{-(r-b)}(\log p)^2\right) & \textup{if }b>a.
\end{cases}
\end{equation}

 
\subsection{Anisotropic bootstrap percolation on $[L]^3$}
In this paper we consider the three-dimensional analogue of the anisotropic bootstrap process studied by Duminil-Copin,
van Enter and Hulshof. 
In dimension $d=3$, we write $a_1=a, a_2=b$ and $a_3=c$.
\vskip -.2cm
\begin{figure}[ht]
	\centering
	\includegraphics[width=0.35\textwidth]{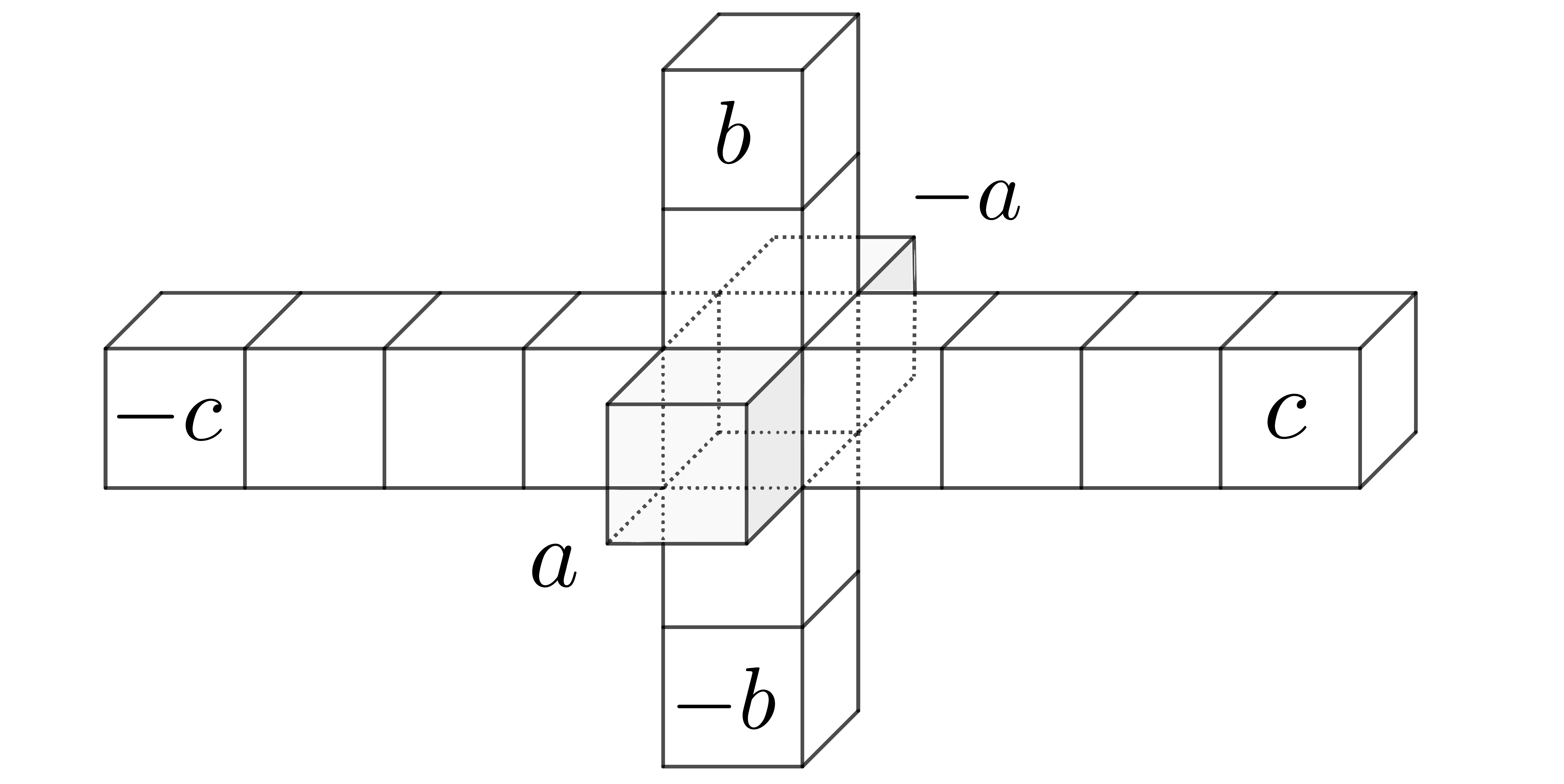}
	\caption{The neighbourhood $N_{a,b,c}$ with $a=1, b=2$ and $c=4$. The $e_1$-axis is towards the reader, the 
	$e_2$-axis is vertical, and the $e_3$-axis is horizontal.}
	\label{figanis3d}
\end{figure}

These models were studied by van Enter and Fey \cite{AA12}, and the present author \cite{DB22-1} for $r\in\{1+b+c,\dots,a+b+c\}$; 
they determined the following bounds on the critical length, as $p\to 0$,

\begin{equation}\label{vefey}
		\log \log L_c\left(\mathcal N_{r}^{a,b,c},p\right)= 
	\begin{cases}
	\Theta\left(p^{-(r-(b+c))}\right)                    & \textup{if } b=a, \\
	\Theta\left(p^{-(r-(b+c))}(\log \frac 1p)^{2}\right) & \textup{if } b>a.
	\end{cases}
	\end{equation}

We moreover determined the magnitude of the critical length up to a constant factor in the exponent in the cases $r\in\{c+1,c+2\}$, $r\le a+c$, for all triples $(a,b,c)$, except for $c=a+b-1$ when $r=c+2$ (see Section 6 in \cite{DB20}): set $s:=r-c\in\{1,2\}$, then, as $p\to 0$, 

\begin{equation}\label{myP1}
\log L_c\left(\mathcal N_{r}^{a,b,c},p\right)=  \begin{cases}
\Theta\left(p^{-s/2}\right)  & \textup{if } c=b=a, \\
\Theta\left(p^{-s/2}(\log \frac 1p)^{1/2}\right) & \textup{if } c=b>a, \\
\Theta\left(p^{-s/2}(\log \frac 1p)^{3/2}\right) & \textup{if } c\in \{b+1,\dots,a+b-s\}, \\
\Theta\left(p^{-s}\right)                                  &   \textup{if } c=a+b,   \\
\Theta\left(p^{-s}(\log \frac 1p)^{2}\right)   & \textup{if } c> a+b. 
	\end{cases}
\end{equation}

In this paper we extend the last two cases in \eqref{myP1} to all values $c<r\le b+c$, 
by determining $\log$ of the critical length up to  a  constant  factor for these values of $r$ and $c$.
The following is our main result.
\begin{theor}\label{extendBeam}
For every $r\in \{c+1,\dots,b+c\}$, as $p\to 0$, 
\begin{equation}\label{myP4}
\log L_c\left(\mathcal N_{r}^{a,b,c},p\right)=  \begin{cases}
\Theta\left(p^{-(r-c)}\right)                                  &   \textup{if } c=a+b,   \\
\Theta\left(p^{-(r-c)}(\log \frac 1p)^{2}\right)   & \textup{if } c> a+b. 
	\end{cases}
\end{equation}
\end{theor}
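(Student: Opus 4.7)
Set $s := r - c \in \{1, \ldots, b\}$; the hypothesis $c \geq a+b$ places us in the regime where the $e_3$-axis is dominant. The key structural observation is that any site with a fully infected $c$-thick slab on its $-e_3$ side needs only $s$ additional infections within its 2D cross-section in order to be infected; hence growth of such a slab in the $\pm e_3$-direction reduces, one new layer at a time, to running the 2D subprocess $\mathcal N_s^{a,b}$ at density $p$ on the newly exposed layer. The two regimes $c=a+b$ and $c>a+b$ in \eqref{myP4} are then the 3D analogues of the 2D dichotomy $b=a$ vs.\ $b>a$ in \eqref{paso1}: isotropic cross-sectional growth in the first case, anisotropic growth (with the $(\log 1/p)^2$ overhead) in the second. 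This correspondence organises both bounds.

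For the upper bound I plan a hierarchical droplet construction in the style of \cite{DB22-1, AA12}. I begin with a fully infected $c\times c\times c$ cube, which appears inside a box of polynomial-in-$1/p$ size with positive probability, and then grow a $c$-thick slab in three phases: (i) an Aizenman--Lebowitz-type 2D iteration within the slab to enlarge its $(e_1,e_2)$-cross-section; (ii) extension by one unit in the $\pm e_3$-direction, controlled from below by the probability that $\mathcal N_s^{a,b}$ percolates the current cross-section at density $p$; and (iii) occasional additional nucleations needed to broaden very long beams. Optimising the cross-sectional cost against the $e_3$-length produces a critical droplet whose probability yields $\log L_c = O(p^{-s})$ when $c=a+b$ and $\log L_c = O(p^{-s}(\log(1/p))^2)$ when $c>a+b$, by a standard ``a droplet nucleates somewhere in $[L]^3$'' argument.

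For the lower bound I plan the stable sets method announced in the title. At each relevant scale I define a family of \emph{stable sets}: sparse patterns of healthy sites whose presence obstructs any candidate slab from extending past them. An Aizenman--Lebowitz-type monotonicity lemma, adapted to the beams process, forces $\genA=[L]^3$ to produce internally spanned rectangular blocks at every intermediate scale; such blocks must avoid the stable sets of their scale, and the probability of this avoidance can be bounded sharply. Summing over block positions and optimising over scales then yields the matching lower bound. The main technical obstacle is the $(\log(1/p))^2$ factor in the case $c>a+b$: it emerges from an entropy-versus-energy balance between the $e_3$-extent that a stable set must cover and the number of transverse positions available to it, and getting this balance tight in \emph{both} directions of the inequality is the crux of the proof. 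A subsidiary difficulty is to design 3D stable sets whose blocking action survives the interleaving of 2D-in-slab growth with $e_3$-extension that is inherent to the beams process.
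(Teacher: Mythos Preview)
Your upper-bound sketch is broadly viable but diverges in structure from the paper. For $c>a+b$ the paper simply invokes the general bound \eqref{generalub}, so that case is free. For $c=a+b$ the paper does not grow a single $c$-thick slab; it splits according to whether $r>a+c$ or $r\le a+c$ (i.e., whether the induced $e_1$-face process $\mathcal N_{r-a}^{b,c}$ is critical or still supercritical), using droplets of shape $[a]\times[h]\times[p^{-a}h]$ in the first sub-case and $[l]\times[l^{s-1}]\times[l^s]$ in the second. The second sub-case moreover needs a dedicated tool (Lemma \ref{MainLemmafor>2}) controlling the probability that $\mathcal N_s^{s,s}$ fills a rectangle via ``$s$-patterns'', optimised across several intermediate scales $p^{-m}\le k\le p^{-(m+1)}$; your phase (ii) reduction to $\mathcal N_s^{a,b}$ on a single new layer is morally right but does not, on its own, produce the sharp exponent $p^{-s}$ without that optimisation.

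Your lower-bound plan has a real conceptual mismatch. In this paper ``stable set'' means $\Ss(\U)\subset S^{d-1}$, the set of \emph{directions} used to classify families (Remark \ref{Sscases}); it is not a family of healthy obstruction patterns, and no such patterns appear anywhere in the lower bound. The paper's mechanism is different: the beams-process Aizenman--Lebowitz lemma (Lemma \ref{ALlema2}) produces a covered beam $H\times[w]$ where $H$ is a general connected $\mathcal N_{a+b+1}^{a,b}$-closed set, \emph{not} a rectangle as you write. One then bounds two alternatives. If $w$ is large, a direct product over disjoint slices (each needing $s$ nearby infections) suffices. If $|H|$ is large, one couples the cross-sectional growth with coarse \emph{subcritical} $\mathcal N_{a+b+1}^{a,b}$-bootstrap at effective density $\varepsilon=O(wp^{\,r-(a+b)})$ and invokes exponential decay of the cluster size (Theorem \ref{expdecay}). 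The $(\log\tfrac 1p)^2$ for $c>a+b$ arises exactly here: one can afford $w=p^{-s-1/2}$, giving $\varepsilon=O(p^{1/2})$ and hence decay rate $C=\Theta(\log\tfrac 1p)$, which is then applied at scale $h=\Theta(p^{-s}\log\tfrac 1p)$. Your ``entropy-versus-energy balance between the $e_3$-extent of a stable set and transverse positions'' is not the source of the two logarithms; they come from the subcritical decay rate times the transverse scale. I would replace the obstruction-pattern plan by this coupling-plus-exponential-decay argument.
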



We also have upper bounds for the remaining values $c<a+b$ when $c<r\le a+c$, and believe that they tell us the right order of the threshold. This range of values of $r$ corresponds to two out of four possibilities for the {\it stable set} of $\mathcal N_{r}^{a,b,c}$ (see Remark \ref{Sscases}). 

\begin{prop}\label{twoout4}
Consider the sequences $\{\alpha_s\}_{s\ge 2}$ and $\{t_s\}_{s\ge 2}$  given by
\begin{equation}\label{alfas}
  \alpha_s= \frac{t+1}{t+2}\left(s-t/2\right),
  \textup{ and }
  t=t_s:=\left\lceil\frac{\sqrt{9+8s}-5}{2}\right\rceil.
\end{equation}
\begin{enumerate}
\item[\textup{(i)}] For every $r\in \{c+2,\dots,a+c\}$, if $a+b-(r-c)<c <a+b$, as $p\to 0$, 
\begin{equation}\label{myP4-3}
\log L_c\left(\mathcal N_{r}^{a,b,c},p\right)=  \begin{cases}
O\left(p^{-\alpha_{r-c}}(\log \tfrac 1p)^{2}\right)
   & \textup{if } r < a+b+\alpha_{r-c}. 
  \\
O\left(p^{-(r-(a+b))}(\log \tfrac 1p)^{2}\right)
   & \textup{if } r\ge a+b+\alpha_{r-c}. 
	\end{cases}
\end{equation}

\item[\textup{(ii)}]
For every $r\in \{c+3,\dots,a+c\}$, as $p\to 0$, \begin{equation}\label{magnitudrlea+c}
\log L_c\left(\mathcal N_{r}^{a,b,c},p\right)=  \begin{cases}
O\left(p^{-\alpha_{r-c}}\right)  & \textup{if } c=b=a, \\
O\left(p^{-\alpha_{r-c}}(\log \frac 1p)^{(t_{r-c}+1)/(t_{r-c}+2)}\right) & \textup{if } c=b>a, \\
O\left(p^{-\alpha_{r-c}}(\log \frac 1p)^{(t_{r-c}+3)/(t_{r-c}+2)}\right) & \textup{if } b<c \le a+b-(r-c).
	\end{cases}
\end{equation}
\end{enumerate}
\end{prop}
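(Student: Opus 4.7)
The plan is to prove the upper bounds by constructing, for each parameter regime, an explicit growth mechanism that starts from a small seed and, with probability bounded away from zero, infects a target box of the size claimed in the proposition. The desired bound on $L_c$ then follows from the standard tiling argument: if $q$ denotes the probability that a box $D$ is fully infected by a $p$-random set of initial infections inside $D$, then $L^3 \gg |D|/q$ suffices for at least one translate of $D$ to percolate, giving $\log L_c \lesssim \log(1/q)$.

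The growth mechanism will be an axis-aligned hierarchy of rectangular droplets $D_0 \subset D_1 \subset \cdots \subset D_{t+1}$ with $t = t_{r-c}$. The innermost seed $D_0$ is a box of polylogarithmic size in $1/p$ with every vertex infected; finding such a seed inside $[L]^3$ costs a factor of $p^{\mathrm{polylog}(1/p)}$ that will be absorbed into the exponent. Each growth step $D_i \rightsquigarrow D_{i+1}$ requires the strip between the two droplets to contain enough seeds of $A$ to feed the monotone update rule; after choosing the dimensions $(x_i,y_i,z_i)$ of the $D_i$ optimally, each growth succeeds with probability at least $\exp(-p^{-\alpha_{r-c}/(t+1)}(\log(1/p))^{O(1)})$, following the beams framework developed in \cite{DB22-1}. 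Multiplying over the $t+1$ layers yields total probability of order $\exp(-p^{-\alpha_{r-c}}(\log(1/p))^{O(1)})$, matching the claimed bounds.

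The choice of the number of layers $t+1$ comes from optimizing the product of the per-layer costs against the savings obtained from growing in the third dimension; this optimization forces $t+1 \approx \sqrt{2s}$ with $s = r-c$, producing the ceiling formula for $t_s$ and the exponent $\alpha_s = (t+1)(s - t/2)/(t+2)$. The case distinctions in the proposition are then read off as follows. In part (i), when $r \ge a+b+\alpha_{r-c}$, a single two-dimensional slab growing by the anisotropic $(a,b)$-rule in the $xy$-plane already beats the three-dimensional construction, giving the $p^{-(r-(a+b))}(\log(1/p))^2$ bound familiar from \eqref{paso1}; otherwise, the three-dimensional construction is optimal. The three subcases in part (ii) reflect the cost of propagating infection across each horizontal slab using the two-dimensional rule: no log-factors when $a=b=c$, a log-factor of exponent $(t+1)/(t+2)$ when $a<b=c$, and one of exponent $(t+3)/(t+2)$ when $b<c$, mirroring exactly the two-dimensional estimate in \eqref{paso1}.

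The main obstacle lies in verifying the growth step $D_i \rightsquigarrow D_{i+1}$ in the genuinely three-dimensional, many-layer regime. One must track how infection propagates across the boundary strip along each face of $D_{i+1}$, and the anisotropy $(a,b,c)$ together with the threshold $r$ produces a proliferation of subcases depending on which faces are \emph{easy} (grow with no additional seeds) and which are \emph{hard} (require explicit nucleation). The stable-set classification referenced in Remark~\ref{Sscases} determines this split. Apart from this delicate bookkeeping, the argument is an amalgam of standard anisotropic bootstrap techniques and the beams process of \cite{DB22-1}, and no fundamentally new ideas beyond those already available should be needed.
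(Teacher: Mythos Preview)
Your proposal misidentifies where $t=t_s$ enters the argument. In the paper $t+1$ is not the number of nested droplets in a hierarchy; it counts the ``hard'' columns of an $s$-pattern used to trigger the supercritical two-dimensional process on the face orthogonal to $e_3$. Since $s:=r-c\le a\le b$, the induced face-process $\mathcal N_{s}^{a,b}$ is dominated by $\mathcal N_s^{s,s}$; Lemma~\ref{MainLemmafor>2} and Corollary~\ref{1corols>2} show that a square $[k]^2$ is internally filled under $\mathcal N_s^{s,s}$ once each width-$s$ strip contains sets $S_0,\dots,S_{s-1}$ of sizes $s,s-1,\dots,1$ in consecutive columns. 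When $k\ge p^{-(s-t-1)}$ the sets of size at most $s-t-1$ are essentially free, and the remaining $t+1$ sets cost $\prod_{i=0}^{t} kp^{s-i}=k^{t+1}p^{(t+1)(s-t/2)}$; one more factor of $k$ from the number of strips gives the exponent $\alpha_s(t+2)=(t+1)(s-t/2)$. The value of $t_s$ is fixed by the constraint $s-t-1\le\alpha_s<s-t$, not by balancing layers of a droplet hierarchy. Your per-layer cost $\exp\bigl(-p^{-\alpha_s/(t+1)}\bigr)$ has no counterpart in the actual computation, and even on its own terms the arithmetic fails: the product of $t+1$ such factors is $\exp\bigl(-(t+1)p^{-\alpha_s/(t+1)}\bigr)$, not $\exp\bigl(-p^{-\alpha_s}\bigr)$.

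Two further corrections. The beams process is a \emph{lower}-bound tool (Section~\ref{SectionLower}); the upper bounds here are proved by the standard critical-droplet method, growing a single box $R_k$ one unit at a time via Lemmas~\ref{regimer>a+b} and~\ref{regimer<a+b} together with Corollary~\ref{corosp}. And the three log-exponents in part~(ii) are not read off from~\eqref{paso1}: they come from the specific shape of the critical droplet in each subcase. For $b<c\le a+b-s$ the droplet is $[h]^2\times[c]$ with $h=C_s p^{-\alpha_s}(\log\tfrac1p)^{1/(t+2)}$, the power $1/(t+2)$ chosen so that the face-probability from Corollary~\ref{1corols>2} becomes $1-p^{C_s}$; filling the droplet row by row then costs $p^{\Theta(h)}$, whence the exponent $1+1/(t+2)=(t+3)/(t+2)$. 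For $c=b=a$ the growth integrates $-\log(1-e^{-z^{t+2}})$ and no log correction appears. For $c=b>a$ the droplet is instead thin in the $e_1$-direction, of width $p^{-\alpha_s}(\log\tfrac1p)^{-1/(t+2)}$, giving $(t+1)/(t+2)$. None of this structure is visible in your outline.
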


Here there are some numerical values of $t_s$ and $\alpha_s$, for $s=2,3,\dots,14$.
\begin{center}
\begin{tabular}{|c|c|c|c|c|c|c|c|c|c|c|c|c|c|} 
 \hline
 $s$ & 2 & 3 & 4 & 5 & 6& 7 & 8 & 9 & 10 & 11 & 12 & 13& 14 \\ \hline
 $t_s$ & 0 & 1 & 1 & 1 & 2 & 2 & 2 & 2 & 3 & 3 & 3 & 3 & 3 \\ \hline
 $\alpha_s$ & 1 & 5/3 & 7/3 & 3 & 15/4 & 18/4  & 21/4 & 6 & 34/5 & 38/5 & 42/5 & 46/5 & 10 \\ 
 \hline
\end{tabular}
\captionof{table}{Some values of $t_s$ and $\alpha_s$.} 
\end{center}

Finally, the range $a+c<r \le b+c$ corresponds to a third possibility for the stable set of $\mathcal N_{r}^{a,b,c}$; in these cases with $c< a+b$ (based on upper bounds), we conjecture that the critical length is given by the following.
\begin{conj}\label{targetgeneralc>b}
For $r\in\{a+c+1,\dots ,b+c\}$, as $p\to 0$,
\begin{equation}\label{magnitudrleb+c}
\log L_c\left(\mathcal N_{r}^{a,b,c},p\right)=  \begin{cases}
\Theta\left(p^{-(r-c-a+\alpha_a)}\right)  & \textup{if } c=b>a, \\
\Theta\left(p^{-(r-a-b)}
\right) 
& \textup{if } b< c < a+b, 
\end{cases}
\end{equation}
where $\alpha_1=1/2$.
\end{conj}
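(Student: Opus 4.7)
The plan is to establish matching upper and lower bounds for $\log L_c(\mathcal{N}_r^{a,b,c},p)$ in both subcases, adapting the ``stable sets'' machinery used for Theorem \ref{extendBeam} to the third possible stable-set structure that arises precisely when $a+c<r\le b+c$ and $c<a+b$. For the upper bound one first identifies the stable set $\Ss_r$ of $\mathcal{N}_r^{a,b,c}$: here $\Ss_r$ contains directions that can only be destabilized by a nontrivial two-dimensional nucleation, so the appropriate critical droplet is a parallelepiped whose growth in the hardest direction $e_3$ proceeds slab-by-slab, each new slab requiring a two-dimensional ``helper''. One then builds a percolating configuration by (i) nucleating a small fully-infected parallelepiped $R_0$, (ii) growing $R_0$ by appending successive $e_3$-slabs, each generated by an internally spanned 2D droplet of a subordinate anisotropic model inside the $(e_1,e_2)$-plane, and (iii) balancing the dimensions so that the probability cost of nucleation matches that of the helpers. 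In the subcase $c=b>a$, the helper is an internally spanned droplet of $\mathcal{N}_{r-c}^{a,b}$ with $r-c\in\{a+1,\dots,b\}$, and the two-dimensional analogue of Proposition \ref{twoout4} gives critical exponent $\alpha_a$; this produces the additive $\alpha_a$ in $r-c-a+\alpha_a$. In the subcase $b<c<a+b$, the helper needs $r-a-b$ simultaneous near-matches per step, yielding the exponent $r-a-b$.

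For the lower bound I would employ the hierarchies method of Balogh-Bollob\'as-Morris, adapted to the new stable set. The steps are: (1) prove an Aizenman-Lebowitz-type lemma showing that $\genA=[L]^3$ forces internally spanned parallelepipeds at every scale up to $L$; (2) define a ``good hierarchy'' as a tree of nested internally spanned parallelepipeds whose children cover their parent under a beam-closure operation tailored to $\Ss_r$; (3) show that percolation implies the existence of such a hierarchy with controllable branching and depth; (4) enumerate hierarchies and, using the van den Berg-Kesten inequality, bound the probability that all required seed and growth events are present in $A$; (5) union-bound over hierarchies and optimize in $L$ to conclude $\p_p(\genA=[L]^3)<1/2$ whenever $\log L$ is smaller than the claimed exponent by a sufficiently small constant factor.

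The main obstacle is the combinatorial structure forced by the new stable set. The beam-closure used for Theorem \ref{extendBeam} is no longer adequate, since the directions in $\Ss_r$ require growth events that genuinely mix the $(e_1,e_2)$-plane with the $e_3$-axis. In the subcase $c=b>a$, each 3D growth step in fact embeds a 2D anisotropic bootstrap sub-problem, so the hierarchy becomes \emph{nested}: each node of the 3D hierarchy carries its own 2D sub-hierarchy responsible for producing a helper slab. Carrying out this nested enumeration while preserving the sharp scaling in the exponent, and in particular recovering the half-integer contribution $\alpha_a$ on the lower-bound side, is the delicate part of the argument; a parallel difficulty for $b<c<a+b$ is ensuring that the $r-a-b$ independent near-matches per slab cannot be replaced by a cheaper mixed mechanism exploiting the anisotropy between the $a$- and $b$-directions.
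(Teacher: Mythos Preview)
The statement you are attempting to prove is labeled \emph{Conjecture} in the paper (Conjecture~\ref{targetgeneralc>b}), and the paper does not prove it. In the discussion preceding it the author writes ``based on upper bounds, we conjecture that the critical length is given by the following,'' and in the final section (Future work) the author explicitly lists the determination of the threshold for $c+3\le r\le b+c$ with $c<a+b$ as an open problem, adding that ``significant technical obstacles remain.'' There is therefore no proof in the paper to compare your proposal against.

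As for the proposal itself: it is not a proof but a high-level programme. The upper-bound sketch is broadly in the spirit of the paper's methods (nucleate a droplet, grow slab by slab using a subordinate two-dimensional process), and indeed the paper hints that upper bounds of roughly this shape are what motivated the conjecture. The lower-bound sketch, however, is where the genuine difficulty lies, and your proposal does not go beyond naming the standard ingredients (Aizenman--Lebowitz, hierarchies, BK). The specific obstacle you identify --- that each node of a three-dimensional hierarchy must carry its own two-dimensional sub-hierarchy producing a helper slab, and that one must enumerate these nested structures while retaining the correct exponent (in particular the $\alpha_a$ contribution when $c=b>a$) --- is exactly the kind of technical obstacle the author alludes to. Nothing in your outline indicates how to carry out that nested enumeration or how to rule out cheaper mixed growth mechanisms; these are the missing ideas, not merely missing details.
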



\subsection{The stable sets approach}
The model we study here is a special case of the following extremely general class of $d$-dimensional monotone cellular automata, which were introduced by Bollob\'as, Smith and Uzzell~\cite{BSU15}.

Let $\U=\{X_1,\dots,X_m\}$ be an arbitrary finite family of finite
subsets of $\Zd\setminus \{0\}$. We call $\U$ the {\it update family}, 
each $X\in\U$ an {\it update rule}, and the process itself $\U${\it-bootstrap percolation}.
Let $\Lambda$ be either $\Zd$ or $\Zd_L$ (the $d$-dimensional torus of sidelength $L$).
Given a set $A\subset \Lambda$ of initially {\it infected} sites, set $A_0=A$, and define for each $t\ge 0$,
\[A_{t+1}=A_t\cup\{x\in\Lambda: x+X\subset A_t \text{ for some }X\in\U\}.\]
The set of eventually infected sites is the {\it closure} of $A$, denoted by
$\genA_\U=\bigcup_{t\ge 0}A_t$, and
we say that there is {\it percolation} when $\genA_\U=\Lambda$.\\ 
For instance, our $\mathcal N_{r}^{a_1,\dots,a_d}$-model is the same as $\mathcal N_{r}^{a_1,\dots,a_d}$-bootstrap percolation,
where $\mathcal N_{r}^{a_1,\dots,a_d}$ is the family consisting of all subsets of size $r$ of the neighbourhood 
$N_{a_1,\dots,a_d}$ in (\ref{neigh3}).


 Let $S^{d-1}$ be the unit $(d-1)$-sphere and denote the discrete half space orthogonal to $u\in S^{d-1}$ as
 $\dhp^d:=\{x\in\Zd:\langle x,u\rangle <0\}$.
The {\it stable set} $\Ss=\Ss(\U)$ is the set of all $u\in S^{d-1}$
such that no rule $X\in\U$ is contained in $\dhp^d$. 
Let $\mu$ denote the Lebesgue measure on $S^{d-1}$. The following classification of families was proposed in \cite{BSU15} for $d=2$ and extended to all dimensions in \cite{BDMS15}:
A family $\U$ is

\begin{itemize}
 \item {\it subcritical} if for every hemisphere $\mathcal H \subset S^{d-1}$ we have $\mu(\mathcal H \cap\Ss)>0$.
 \item {\it critical} if there exists a hemisphere $\mathcal H \subset S^{d-1}$ such that $\mu(\mathcal H \cap\Ss)=0$, and
every open hemisphere in $S^{d-1}$ has non-empty intersection with $\Ss$;
 \item {\it supercritical} otherwise. 
 \end{itemize}



In general, we are mostly interested in critical families. It is easy to check that the family $\mathcal N_{r}^{a,b,c}$ is critical if and only if 
 $r\in\{c+1,\dots,a+b+c\}.$

\begin{rema}[The stable sets approach]\label{Sscases}
For each $i=1,2,3$, let us denote by $S_i^1:=\{(u_1,u_2,u_3)\in S^{2}: u_i=0\}$ the unit circle contained in $S^2$ that is orthogonal to the vector $e_i$.
Then, straightforward calculations lead us to
\begin{equation}\label{viaSs}
\Ss(\mathcal N_{r}^{a,b,c})=
\begin{cases}
\{\pm e_1, \pm e_2, \pm e_3\} & \textup{ for } c< r \le a+b,\\
\{\pm e_3\}\cup S^1_3 & \textup{ for } a+b< r \le a+c,\\
S^1_2 \cup S^1_3 & \textup{ for } a+c< r \le b+c,\\
S^1_1 \cup S^1_2 \cup S^1_3 & \textup{ for } b+c< r \le a+b+c.
\end{cases}
\end{equation}
The critical length $L_c\left(\mathcal N_{r}^{a,b,c},p\right)$ is determined in the case $\Ss(\mathcal N_{r}^{a,b,c})=S^1_1 \cup S^1_2 \cup S^1_3$ and by \eqref{vefey} it is doubly exponential in $p$, as $p\to 0$.
On the other hand, we have shown in \cite{DB20} that $L_c\left(\mathcal N_{r}^{a,b,c},p\right)$ is singly exponential in the first 3 cases in \eqref{viaSs}.

Given \eqref{myP1} and Theorem \ref{extendBeam}, it only remains to determine $L_c\left(\mathcal N_{r}^{a,b,c},p\right)$ 
in the first 3 cases in \eqref{viaSs} for $c<a+b$, where we believe that the magnitude is given by Proposition \ref{twoout4} and Conjecture \ref{targetgeneralc>b}. 
Note that the cases in \eqref{magnitudrleb+c} correspond to $\Ss(\mathcal N_{r}^{a,b,c})= S^1_2 \cup S^1_3$,
the cases in \eqref{myP4} and \eqref{myP4-3} correspond to $\Ss(\mathcal N_{r}^{a,b,c})= \{\pm e_3\}\cup S^1_3$, while cases in  \eqref{magnitudrlea+c} correspond to $\Ss(\mathcal N_{r}^{a,b,c})= \{\pm e_1, \pm e_2, \pm e_3\}$.

\end{rema}



\section{Proof of Theorem \ref{extendBeam}: Upper bounds}\label{SectionUpper12}
It is known that (see Proposition A.1 in \cite{DB20}) for every $r\in\{c+1,\dots,c+b\}$, as $p\to 0$,
\begin{equation}\label{generalub}
\log L_c\left(\mathcal N_{r}^{a,b,c},p\right)
=O\left(p^{-(r-c)}(\log p)^2\right).
\end{equation}

In particular, the the upper bound for the second case in \eqref{myP4} follows. Therefore, it only remains to cover the first case $c=a+b$ when $r-c\in\{3,\dots,b\}$, since the sub-cases $r-c\in\{1,2\}$ are covered by \eqref{myP1}.


\begin{defi}\label{intfilled}
A {\it rectangular block} is a set of the form $R=[l]\times[h]\times[w]\subset\Zdt$. 
A rectangular block $R$ is {\it internally filled} if $R\subset [ A\cap R]$,
and denote this event by $I^\bullet(R)$.
\end{defi}
 When $l,h,w\ge c$, for simplicity we denote the event
\[I(l,h,w) := I^\bullet([l]\times [h]\times[w]).\]


Throughout this section we will assume that
\[c=a+b.\]
As usual in bootstrap percolation, we actually prove a stronger proposition.
\begin{prop}\label{upper=}
Consider $\mathcal N_{r}^{a,b,c}$-bootstrap percolation with $r\in\{c+3,\dots,b+c\}$. 
There exists a constant $\Gamma>0$ such that, if \[L=\exp\left(\Gamma p^{-(r-c)}\right),\] then
 $\p_p\left(I^\bullet([L]^3)\right)\to 1,\ as\ p\to 0.$\end{prop}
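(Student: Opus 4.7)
I would adopt the standard nucleation-and-growth scheme: (i) construct a \emph{critical droplet} $R_0\subset[L]^3$ of polynomial-in-$p^{-1}$ volume whose internal-fill probability is at least $\exp(-C_0\,p^{-(r-c)})$ for some constant $C_0>0$, and (ii) prove a growth lemma showing that conditional on $I^\bullet(R_0)$, the infection fills $[L]^3$ with probability $1-o(1)$. Granting these two ingredients, the Proposition follows from a standard disjoint-translates argument: $[L]^3$ contains $\Theta(L^3/|R_0|)$ disjoint translates of $R_0$, and by independence the probability that none of them is internally filled is at most
\[
\bigl(1-\exp(-C_0 p^{-(r-c)})\bigr)^{L^3/|R_0|},
\]
which tends to $0$ whenever $3\log L > C_0\, p^{-(r-c)}$; hence any $\Gamma > C_0/3$ suffices.

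\textbf{Construction of $R_0$.} The construction exploits the hypothesis $c=a+b$ in the following crucial way. If a slab $[l]\times[h]\times[c]$ is fully infected, then every site $(x,y,c)$ in the next $e_3$-layer automatically has the $c$ neighbours $(x,y,0),\ldots,(x,y,c-1)$ infected, so it is infected as soon as it acquires $s:=r-c$ further infected neighbours inside the layer. Hence adding a new $e_3$-layer to an internally-filled $c$-thick slab reduces to the planar $\mathcal N_s^{a,b}$-bootstrap process with initial density $p$ on that layer. Since $s\in\{3,\ldots,b\}$ satisfies $s\le b$, this 2D process is \emph{supercritical}, and a layer of polynomial-in-$p^{-1}$ side length percolates at polynomial cost in $p^{-1}$. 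I would therefore build $R_0$ by producing a small internally-filled $c$-thick base---whose internal fill carries essentially all the cost and contributes $\exp(-\Theta(p^{-(r-c)}))$---and stacking $w_0=p^{-\Theta(1)}$ supercritically-filling layers on top; the total internal-fill probability then compounds to the claimed $\exp(-\Theta(p^{-(r-c)}))$.

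\textbf{Growth from $R_0$ and main obstacle.} Because the stable set $\mathcal S(\mathcal N_r^{a,b,c})$ is either $\{\pm e_3\}\cup S_3^1$ or $S_2^1\cup S_3^1$---neither containing an open hemisphere---the infection propagates freely from the boundary of $R_0$ along the unstable ``diagonal'' directions. Combining this with the layer-by-layer supercritical mechanism above, one obtains a growth lemma of the flavour of Proposition~A.1 of~\cite{DB20}, showing that an internally-filled $R_0$ grows to cover $[L]^3$ with probability $1-o(1)$ as soon as $\log L = O(p^{-(r-c)})$. The main obstacle is the critical-droplet estimate itself: verifying $\log\p_p(I^\bullet(R_0))\ge -\Theta(p^{-(r-c)})$ with \emph{no} $(\log 1/p)$ prefactor. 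The saving over the general bound~\eqref{generalub} comes precisely from the supercriticality of $\mathcal N_s^{a,b}$ when $c=a+b$, which permits adding each new $e_3$-layer at polynomial cost in $p^{-1}$ rather than the logarithmic-in-$L$ cost incurred by the generic Aizenman--Lebowitz construction. I expect the bulk of the technical work will consist of a careful bookkeeping of these layer-by-layer costs so that no stray $\log$ factor infiltrates the final exponent.
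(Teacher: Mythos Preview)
Your high-level framework---nucleation, growth, disjoint translates---is the same as the paper's, and you correctly isolate the point: when $c=a+b$ the induced $2$-dimensional process $\mathcal N_{r-c}^{a,b}$ is supercritical, and this is what removes the $(\log\frac1p)^2$ from the general bound~\eqref{generalub}. But the actual critical-droplet construction you sketch has a real gap, and it is not merely ``bookkeeping''.

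You propose to ``produce a small internally-filled $c$-thick base whose internal fill carries essentially all the cost $\exp(-\Theta(p^{-(r-c)}))$, then stack $e_3$-layers''. This is where the argument breaks down. For the supercritical $\mathcal N_s^{a,b}$ process (with $s=r-c$) to fill a new $[l]\times[h]$ layer with probability $1-o(1)$, one needs roughly $l,h\gg p^{-s/2}$ (see Lemma~\ref{MainLemmafor>2} with $m=0$), so the base already has volume $\gtrsim c\,p^{-s}$. You do not say how to internally fill such a base at cost $\exp(-\Theta(p^{-s}))$: asking for it to lie in $A$ costs $p^{\Theta(p^{-s})}$, which is far too small, and there is no cheaper mechanism available \emph{within a fixed $c$-thick slab}. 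In other words, the step you defer (``the base carries the cost'') is essentially the whole problem.

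The paper avoids this circularity by \emph{not} fixing a base and stacking in one direction. Instead it grows a sequence of boxes $R_k$ simultaneously in two or three directions, with carefully tuned aspect ratios, so that the conditional probability of each growth step tends to~$1$ at a rate that makes the product $\prod_k \p_p(I^\bullet(R_{k+1})\mid I^\bullet(R_k))$ reduce to $\exp\big(-\Theta(p^{-s})\int_0^\infty f\big)$ for an integrable $f$. Two sub-cases are needed:
\begin{itemize}
\item For $a+c<r\le b+c$, the droplet is kept thin ($l=a$) and grown in the $e_2,e_3$ plane along the trajectory $w_k=p^{-a}h_k$, using Lemma~\ref{regimer>a+c}.
\item For $c+3\le r\le a+c$, one grows in all three directions along $h_k=l_k^{\,s-1}$, $w_k=l_k^{\,s}$, and the $e_3$-face estimate requires a finer analysis of the supercritical $\mathcal N_s^{s,s}$ process via the ``$s$-patterns'' of Definition~\ref{seqpattern} and Lemma~\ref{MainLemmafor>2}, which gives different bounds as $h_k$ crosses each threshold $p^{-m}$.
\end{itemize}
It is precisely this growing-box trajectory, together with the window-dependent bounds of Lemma~\ref{MainLemmafor>2}, that prevents any $\log\frac1p$ from appearing. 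Your fixed-base-plus-layers scheme does not provide this mechanism.
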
 


 \subsection{The upper bounds for $r\in \{ a+c+1, \dots, b+c\}$}
 We start with $\Ss(\mathcal N_{r}^{a,b,c})= S^1_2 \cup S^1_3$.
Let us consider the cases $a+c<r\le b+c$,
then the processes induced on the faces orthogonal to $e_3$ and $e_2$, namely $\mathcal N_{r-c}^{a,b}$ and $\mathcal N_{r-b}^{a,c}$ respectively, are supercritical, while the induced process (orthogonal to $e_1$) $\mathcal N_{r-a}^{b,c}$ is critical.
This means, that the most likely way to grow is to start with some small initially infected rectangular block and grow simultaneously along the $e_3$ and $e_2$-directions, until we reach a volume of the order of $L_c\big(\mathcal N_{r-a}^{b,c} , p\big)$, only then we can grow along the $e_1$-direction.

\begin{lemma}[Regime critical $e_1$-process]\label{regimer>a+c}
Consider $\mathcal N_{r}^{a,b,c}$-bootstrap percolation with $a+c<r\le b+c$, and
fix integers $l,h,w\ge c$. If $p$ is small enough, then
\begin{enumerate}
    \item[\textup{(i)}] $\p_p\left(I(l,h+1,w)|I(l,h,w)\right)\ge
	\left(1-e^{-\Omega(p^{r-b}w)}\right)^{a}
	\left(1-e^{-\Omega(p^{r-(a+b)}w)}\right)^{l},$
    \item[\textup{(ii)}] $\p_p\left(I(l,h,w+1)|I(l,h,w)\right)\ge
	\left(1-e^{-\Omega(p^{r-c}h)}\right)^{a}
	\left(1-e^{-\Omega(p^{r-(a+c)}h)}\right)^{l}$.
\end{enumerate}
\end{lemma}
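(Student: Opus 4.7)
\emph{Proof plan.} I will describe the argument for (i); part (ii) is identical after swapping the roles of $e_2$ and $e_3$. The first move is to reduce to a 2D problem on the new layer $[l]\times\{h+1\}\times[w]$. Condition on $I(l,h,w)$: since the Bernoulli($p$) field on $[l]\times\{h+1\}\times[w]$ is independent of the configuration on $[l]\times[h]\times[w]$, it remains an i.i.d.\ $p$-field. Because $h\ge c\ge b$ and the block below is fully infected, each site $(i,h+1,k)$ in the new layer already enjoys $b$ infected $-e_2$-neighbours for free (heights $h,h-1,\dots,h-b+1$). Therefore it needs only $r-b$ further infected neighbours, all coming from within the new layer, and the induced dynamics on the 2D slab $[l]\times[w]$ with coordinates $(i,k)$ is the $\mathcal N_{r-b}^{a,c}$-process. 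Since $r\le b+c$ gives $r-b\le c$, this 2D process is supercritical in the $e_3$-direction.

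Decompose the new layer into the parallel rows $R_i=\{i\}\times\{h+1\}\times[w]$ for $i=1,\dots,l$, each a 1D line of length $w$ along $e_3$. I fill the layer row-by-row starting from $R_1$. Once $R_1,\dots,R_{i-1}$ are infected, a site of $R_i$ gains $\min(i-1,a)$ infected $-e_1$-neighbours from the previous rows, so combined with the $b$ free neighbours from below it needs only $r-b-\min(i-1,a)$ further infected neighbours, which can come only from within $R_i$. Call $R_i$ \emph{seeded} if it contains a window of $c$ consecutive positions with at least $r-b-\min(i-1,a)$ initially infected sites. A standard 1D bootstrap argument shows that once $R_i$ is seeded (and $R_1,\dots,R_{i-1}$ are filled), the seed propagates to both ends of $R_i$: each successive site just outside the currently infected block in $R_i$ has its $c$-range covering at least $r-b-\min(i-1,a)\le c$ infected sites (either surviving seed sites or newly infected sites entering the window as it slides).

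To bound the probability that every row is seeded, partition each $R_i$ into $\lfloor w/c\rfloor$ disjoint windows of width $c$; the probability that a fixed window contains at least $r'$ infected sites is at least $\binom{c}{r'}p^{r'}(1-p)^{c-r'}=\Theta(p^{r'})$, whence
$
\p_p(R_i\text{ is seeded at threshold }r')\ge 1-(1-\Theta(p^{r'}))^{\lfloor w/c\rfloor}\ge 1-e^{-\Omega(p^{r'}w)}.
$
The rows $R_1,\dots,R_l$ occupy disjoint subsets of $\Z^3$, so the seed events are independent and the joint probability factorises. Using the crude bound $r'=r-b$ for the first $a$ rows and $r'=r-a-b$ for the remaining $l-a$ rows, and weakening $l-a$ to $l$ in the exponent (which only decreases the lower bound), one obtains exactly the right-hand side of (i). Part (ii) goes through verbatim after exchanging $e_2$ and $e_3$: the new layer is $[l]\times[h]\times\{w+1\}$, each site has $c$ free infected neighbours from behind, the induced 2D process is $\mathcal N_{r-c}^{a,b}$ (supercritical because $r-c\le b$), and the rows $R_i$ are replaced by the columns $\{i\}\times[h]\times\{w+1\}$ of length $h$ along $e_2$.

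The one step that is not pure bookkeeping is the 1D propagation argument in paragraph two: as the infected cluster inside a row expands, one must check that surviving seed sites together with the newly infected sites always supply at least $r-b-\min(i-1,a)$ points in the relevant $c$-window. This is precisely where the supercriticality hypothesis $r-b\le c$ (resp.\ $r-c\le b$) is used, and I expect it to be the only nontrivial point of the proof; the rest reduces to the standard Bernoulli seed-cluster computation and disjointness of the seed events.
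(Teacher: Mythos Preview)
Your proof is correct and follows essentially the same approach as the paper's (which defers to Lemma~2.6 in \cite{DB20}): reduce to the new 2D face using the $b$ (resp.\ $c$) free neighbours from the filled block, then fill the face row by row in the $e_1$-direction, seeding each row along the easy $e_3$ (resp.\ $e_2$) direction with independent events. The only cosmetic difference is the choice of seed---the paper uses $r'$ \emph{consecutive} initially infected sites in each row, whereas you use any $r'$ infected sites inside a window of length $c$ (resp.\ $b$); both yield the same $1-e^{-\Omega(p^{r'}w)}$ estimate, and in fact your window immediately fills itself (every uninfected window site sees all $r'$ infected window sites among its $c$ nearest $e_3$-neighbours), after which propagation is the trivial block argument.
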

\begin{proof}
See Lemma 2.6 in \cite{DB20}.
\end{proof}

Next, we show that if a rectangle $R\subset [L]^3$ of a well chosen size is internally filled, then it can grow and fill $[L]^3$ with high probability (in the literature  $R$ is called a {\it critical droplet}), for $L$ larger than the critical length (up to a constant factor in the exponent).
\begin{lemma}\label{lemagrow0}
Set $L=\exp(\Gamma p^{-(r-c)})$ and fix $\varepsilon>0$, $h=p^{-(r-c+\varepsilon)}$, and \[R:=[a]\times [h]\times [p^{-a}h].\] 
Conditionally on $I^\bullet(R)$,
the probability that $[L]^3$ is internally filled goes to $1$ as $p\to 0$.
\end{lemma}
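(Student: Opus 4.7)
The plan is to show by iterative growth that, conditionally on $I^\bullet(R)$, the block $R$ fills $[L]^3$ with probability $1-o(1)$. Lemma~\ref{regimer>a+c} controls growth in the $e_2$- and $e_3$-directions; for the $e_1$-direction I would exploit the induced 2D critical process $\mathcal N_{r-a}^{b,c}$ on slabs.

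\textbf{Step 1 (inflate the $e_2\times e_3$-face).} Keeping the $e_1$-thickness fixed at $l=a$, I would alternately apply parts (i) and (ii) of Lemma~\ref{regimer>a+c} to grow $R=[a]\times[h_0]\times[p^{-a}h_0]$ (with $h_0=p^{-(r-c)-\varepsilon}$) all the way up to $[a]\times[L]\times[L]$. Using $c=a+b$, the dominant per-step failure probability is at most $a\exp(-\Omega(p^{r-c}\min(h,w)))$. In a first sub-phase where $\min(h,w)$ is still polynomial in $p^{-1}$, say $\min(h,w)\le p^{-2(r-c)}$, this bound is $\le a\exp(-\Omega(p^{-\varepsilon}))$, which absorbs a union bound over the polynomially many steps of the sub-phase trivially. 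In a second sub-phase with $\min(h,w)\ge p^{-2(r-c)}$, the per-step failure drops to $a\exp(-\Omega(p^{-(r-c)}))$; a union bound over the at most $L^2=\exp(2\Gamma p^{-(r-c)})$ further steps is $o(1)$ provided $\Gamma$ is chosen smaller than the constant hidden in $\Omega(\cdot)$.

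\textbf{Step 2 (propagate in the $e_1$-direction).} Once $[a]\times[L]\times[L]$ is infected, every site $v$ in the adjacent slab $S=\{a+1\}\times[L]\times[L]$ already has $a$ infected neighbours (namely $v-e_1,\dots,v-ae_1$), so $v$ becomes infected as soon as it gathers $r-a$ infected neighbours from the $\pm e_2,\pm e_3$-directions. Restricted to $S$ the dynamics therefore coincides with 2D $\mathcal N_{r-a}^{b,c}$-bootstrap percolation on $[L]^2$ from the fresh Bernoulli$(p)$ initial condition $A\cap S$. Since $r-a\in\{c+1,\dots,b+c\}$, \eqref{paso1} gives $\log L_c(\mathcal N_{r-a}^{b,c},p)=\Theta(p^{-(r-a-c)}(\log p)^2)$, and the ratio $\log L/\log L_c=\Omega(p^{-a}/(\log p)^2)\to\infty$, so $[L]^2$ is vastly super-critical for the induced 2D process. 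Iterating the argument over the $L$ slabs in each of the $\pm e_1$-directions fills $[L]^3$.

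\textbf{Main obstacle.} The delicate part is Step 2: a union bound over the $L=\exp(\Gamma p^{-(r-c)})$ slabs requires per-slab failure probability $o(1/L)=o(\exp(-\Gamma p^{-(r-c)}))$, much stronger than the qualitative $\to 1$ that follows directly from $L\gg L_c(\mathcal N_{r-a}^{b,c},p)$. I would upgrade this to a quantitative bound by tiling each $[L]^2$ slab into $\Theta((L/L_c)^2)$ disjoint sub-slabs of side $\Theta(L_c(\mathcal N_{r-a}^{b,c},p))$, each percolating with probability bounded below by a positive constant and independently of the others, and then using a coalescence argument (or directly the two-dimensional analogue of Lemma~\ref{regimer>a+c}) to spread from a positive density of percolated sub-slabs to the whole slab. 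Since $L/L_c=\exp(\Omega(p^{-(r-c)}))$, the resulting failure probability is stretched-exponentially smaller than $1/L$. A minor wrinkle in Step 1 is that Lemma~\ref{regimer>a+c} is formally stated for $l\ge c$ whereas our block has $l=a$; this is either covered by a mild extension of the lemma, or sidestepped by first inflating the $e_1$-thickness of $R$ from $a$ to $c$ via the same 2D-slab argument of Step 2, applied in a much narrower regime.
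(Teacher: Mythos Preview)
Your approach is valid and uses the same two ingredients as the paper's (one-sentence) proof---Lemma~\ref{regimer>a+c} for $e_2,e_3$-growth and the induced critical 2D process $\mathcal N_{r-a}^{b,c}$ for $e_1$-growth---but orders them differently. The paper grows the $e_2\times e_3$-face only to side $w\approx L_c(\mathcal N_{r-a}^{b,c},p)$, at which point $e_1$-growth becomes available and all three directions can be grown together; you instead push the face all the way to $[L]^2$ before touching the $e_1$-direction. Both routes work, and your Step~2 (tile each slab into $\Theta((L/L_c)^2)$ independent sub-squares, at least one of which percolates except with doubly-exponentially small probability, then spread via the 2D analogue of the growth lemma) is a correct way to make quantitative what the paper leaves implicit under ``then it becomes easy to grow in all directions''.

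One point in Step~1 does need fixing: you may not take $\Gamma$ small. The lemma is invoked in the proof of Proposition~\ref{upper=} precisely with $\Gamma$ chosen \emph{large} (so that among the $\sim L^3$ disjoint translates of $R$ in $[L]^3$ at least one is internally filled), so the clause ``provided $\Gamma$ is chosen smaller than the constant hidden in $\Omega(\cdot)$'' is not available to you. The remedy is simply to replace your crude union bound by the actual sum: maintaining the aspect ratio $w\approx p^{-a}h$, each unit increment of $h$ (together with its $\approx p^{-a}$ accompanying $w$-increments) fails with probability at most $O(p^{-a})\exp(-\Omega(p^{r-c}h))$, and the geometric series gives
\[
\sum_{h\ge h_0}p^{-a}\,e^{-\Omega(p^{r-c}h)}\;=\;O\!\left(p^{-a-(r-c)}\,e^{-\Omega(p^{-\varepsilon})}\right)\longrightarrow 0,
\]
independently of $\Gamma$. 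Once $w$ has reached $L$ and only $h$ remains to be grown, the per-step failure drops to $\exp(-\Omega(p^{r-b}L))$, which is doubly-exponentially small, so the remaining $O(L)$ steps are harmless for every $\Gamma$. With this correction your argument goes through.
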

\begin{proof}
Start with $R$ and apply Lemma \ref{regimer>a+c} until we reach an internally filled rectangular block $R'=[a]\times [w]^2$, with $w\approx L_c\big(\mathcal N_{r-a}^{b,c} , p\big)$, then it becomes easy to grow in all directions.
\end{proof}	

Now, we prove the upper bound for the critical length.
\begin{proof}[Proof of Proposition \ref{upper=} \textup{($r> a+c$)}]
	Set $L=\exp\big(\Gamma p^{-(r-c)}\big)$, where $\Gamma$ is a constant to be chosen. Fix a small $\varepsilon >0$, and consider the rectangle
	\[R:=\left[a\right]\times [p^{-(r-c+\varepsilon) }]\times [p^{-(r-b+\varepsilon) }] \subset[L]^3.\]
As usual, by using Lemma \ref{lemagrow0}, considering disjoint copies of $R$ in $[L]^3$ and taking $\Gamma>0$ large, it is enough to show that there exists a constant $C>0$ such that
	\begin{equation}\label{spcd0}
	\p_p(I^\bullet(R))\ge \exp(-Cp^{-(r-c)}).
	\end{equation}
	To do so, set $h=p^{-2\varepsilon}$, then for every 
	$k=1,\dots,n:=p^{-(r-c)+\varepsilon}$ set
	\[h_k=hk,\ \ w_k=p^{-a}h_k,\ \ R_k=[a]\times [h_k]\times[w_k], \text{ and } \ R_k'=[a]\times [h_k]\times[w_{k+1}].\]
	Note that $R_n=R$,
	$h_{k+1}=h_k+h$ and $w_{k+1}=w_k+p^{-a}h$,
	so by Lemma \ref{regimer>a+c},

	\begin{align*}
	\p_p(I^\bullet(R_n)) & \ge \p_p(R_1\subset A)\prod_{k=1}^{n-1}
	\p_p(I^\bullet(R_k')|I^\bullet(R_k))
	\p_p(I^\bullet(R_{k+1})|I^\bullet(R_{k}'))\\
	& \ge p^{|R_1|}
\prod_{k=1}^{n}	\left[ 
	\left(1-e^{-\Omega(p^{r-c}h_{k})}\right)^{a}
	\left(1-e^{-\Omega(p^{r-c-a}h_{k})}\right)^{a}
	\right]^{p^{-a}h}\\
	& \ \ \ \times 
\prod_{k=1}^{n}	\left[ 
	\left(1-e^{-\Omega(p^{r-b}w_{k+1})}\right)^{a}
	\left(1-e^{-\Omega(p^{r-b-a}w_{k+1})}\right)^{a}
	\right]^{h}\\
	& \ge p^{|R_1|}
	p^{O(n)}
\prod_{k=1}^{n}	\left[ 
1-e^{-\Omega(p^{r-2a-b}hk)}
	\right]^{2p^{-a}h}.
\end{align*}
Finally, note that $|R_1|= ap^{-a-4\varepsilon}\ll O(n)$, since $r>a+c$, thus
\begin{align*}
\p_p(I^\bullet(R))    	& \ge  
	p^{O(n)}
\exp\left(-\Omega\left(p^{-a}p^{-(r-2a-b)}\int_{0}^{\infty}[-\log(1-e^{-x})]\,dx\right)\right)\\
& \ge \exp\big(-Cp^{-(r-(a+b))}\big),
	\end{align*}
	for some constant $C>0$, as we claimed.
\end{proof}

Before we prove Proposition \ref{upper=} when $r\le a+c$, we will have a quick discussion about supercritical two-dimensional families.

\subsection{The supercritical families $\mathcal N_s^{s,s}$}
In this section we consider two-dimensional supercritical $\mathcal N_s^{s,s}$-bootstrap percolation and assume that $s\ge 3$.
\begin{defi}\label{seqpattern}
An $s$-{\it pattern} is a union of $t+1$ sets of vertices:
\[S_0\cup S_{1}\cup \cdots\cup S_t,\]
where $t=t_s<s$ is the biggest integer satisfying

\begin{equation}\label{valueoft}
    \frac{|S_0|+ |S_{1}|+ \cdots +|S_t|
    }{t+2} < s-t,
\end{equation}
and for each $i=0,1,\dots,t$, $S_i\subset \{i+1\}\times\mathbb Z$ is a copy of $\{1\}\times[s-i]$ (so that $|S_i|=s-i$) in the following restricted sense (recall that $e_2=(0,1)$)
\[S_i=m(s-i)e_2 + \{i+1\}\times[s-i], \textup{ for some integer }m\ge 0.\]

\begin{rema}\label{reasonrestricted}
The restrictions above are made to guarantee that when two $s$-patterns intersect in some column $i$, this necessarily implies that they coincide in column $i$. This fact and independence imply that the probability of existing a set $S_i$ inside $\{i+1\}\times[k]$ is at least
\begin{equation}\label{costofSi}
  1-\exp\left(-\Omega\left(p^{|S_i|}k\right)\right).  
\end{equation}
\end{rema}

The next step is to provide a lower bound for the probability of the event $I^{\bullet}([l]\times [k])$. This is the main lemma for $s\ge 3$.

\begin{lemma}[Supercritical induced process]\label{MainLemmafor>2}
Fix $m\in [s]\cup \{0\}$. Under $\mathcal N_s^{s,s}$-bootstrap percolation, there exists $\delta>0$ such that, if $k=\Omega(p^{-m})$ then
\begin{equation}\label{intfillingprob'}
\p_p(I^{\bullet}([l]\times [k])) \ge 
1-\left(  1- \delta\prod_{i=m+1}^s
 \left[  1-\exp\left(-\Omega\left(kp^i\right)\right)  \right]\right)^{l/s}.
\end{equation}
If moreover, $k\le (2/3) p^{-(m+1)}$, then 
\begin{equation}\label{intfillingprob''}
\p_p(I^{\bullet}([l]\times [k])) \ge 
  1-\exp\left(-\Omega\left(k^{s-m}lp^{\sum_{i=m+1}^si}\right)\right).
\end{equation}
\end{lemma}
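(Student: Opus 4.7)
The plan is to partition $[l]\times[k]$ into $N:=\lfloor l/s\rfloor$ disjoint vertical strips $T_j:=\{(j-1)s+1,\dots,js\}\times [k]$, to bound from below the probability that each strip is internally filled, and to conclude by independence of the events $\{A\cap T_j\}_{j=1}^N$ under the Bernoulli product measure.

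For a single strip $T_j$, the strategy is to build a full seed consisting of an $s$-pattern (the germ) in the leftmost $t+1$ columns of $T_j$, plus a short vertical segment (a helper) in each of the remaining $s-t-1$ columns. By Remark \ref{reasonrestricted} and the column-independence of $A$, the probability that an $s$-pattern exists in the leftmost $t+1$ columns of $T_j$ is at least
\begin{equation*}
\prod_{i=0}^{t}\bigl[1-\exp(-\Omega(p^{s-i}k))\bigr]=\prod_{j=s-t}^{s}\bigl[1-\exp(-\Omega(p^{j}k))\bigr],
\end{equation*}
by \eqref{costofSi}.

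To propagate through the remaining columns $t+2,\dots,s$ of the strip, for each such column $j'$ I need an initially infected vertical segment of length $s-j'+1$: this is the size required to make a vertex $(j',y_0)$ triggerable once columns $1,\dots,j'-1$ are fully infected, since such a vertex sees only $j'-1<s$ left-neighbors and therefore needs $s-j'+1$ further neighbors in its own column. Each such segment exists in its column with probability at least $1-\exp(-\Omega(p^{s-j'+1}k))$ by \eqref{costofSi}. Reindexing, the helper contributions are factors $[1-\exp(-\Omega(p^{i}k))]$ for $i=1,\dots,s-t-1$, and combining with the germ gives
\begin{equation*}
\p_p(I^{\bullet}(T_j))\ge \delta'\prod_{i=1}^{s}\bigl[1-\exp(-\Omega(kp^i))\bigr]
\end{equation*}
for some constant $\delta'>0$ absorbing the deterministic closure after all segments are present. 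If $k=\Omega(p^{-m})$, then for every $i\le m$ we have $kp^i=\Omega(1)$ and the corresponding factor is at least $1-e^{-\Omega(1)}$; absorbing these into $\delta$ yields $\p_p(I^{\bullet}(T_j))\ge \delta\prod_{i=m+1}^{s}[1-\exp(-\Omega(kp^i))]$, and \eqref{intfillingprob'} follows by strip independence via $\p_p(I^{\bullet}([l]\times[k]))\ge 1-(1-\p_p(I^{\bullet}(T_1)))^N$.

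For the sharper bound \eqref{intfillingprob''}, in the regime $k\le (2/3)p^{-(m+1)}$ each remaining factor has $kp^i\le 2/3$ (for $i\ge m+1$), so $1-\exp(-\Omega(kp^i))=\Omega(kp^i)$ via $1-e^{-x}\ge x/2$ on $[0,1]$; the product becomes $\Omega(k^{s-m}p^{\sum_{i=m+1}^s i})$, and $(1-x)^N\le e^{-Nx}$ with $N=\lfloor l/s\rfloor$ yields the stated exponential. The main obstacle will be justifying the deterministic-closure claim: I must verify column-by-column that each helper segment, together with the already-infected columns to its left, triggers the full vertical propagation of its column under the $\mathcal N_s^{s,s}$-rule, and that the germ grows to fill columns $1,\dots,t+1$ vertically before the rightward cascade begins. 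The constraint \eqref{valueoft}, which forces the average germ-column size to be strictly below $s-t$, is precisely what ensures this growth-feeds-triggering structure, and carrying out this column-by-column induction carefully is the technical heart of the argument.
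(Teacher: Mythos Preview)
Your proposal is essentially the paper's own argument: partition into $\lfloor l/s\rfloor$ strips of width $s$, and in each strip look for restricted segments $S_0,\dots,S_{s-1}$ of sizes $s,s-1,\dots,1$ in columns $1,\dots,s$ respectively (the paper does not separate these into ``germ'' and ``helpers'', but the sets and the resulting product $\prod_{i=1}^{s}[1-\exp(-\Omega(kp^i))]$ are identical), then absorb the first $m$ factors into $\delta$ and linearize the remaining ones exactly as you do. One small correction: the constraint \eqref{valueoft} on $t_s$ plays no role in the deterministic closure here---the column-by-column induction (column $j$ fills once columns $1,\dots,j-1$ are full and a vertical segment of length $s-j+1$ is present in column $j$) works for every $j\le s$ regardless of $t$, and this lemma uses all $s$ columns; the constraint \eqref{valueoft} enters only later, in the optimization defining $\alpha_s$.
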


\begin{proof}
Partition the rectangle $R=[l]\times [k]$ 
into $l/s$ copies of $R'=[s]\times [k]$, and note that $R$ is internally filled if we can find $s$ restricted (in the sense of Definition \ref{seqpattern}) sets
\[S_0 \cup S_{1}\cup  \cdots \cup S_{s-1}\]
 in the rectangle $R'$ (or any of its disjoint copies), so by Remark \ref{reasonrestricted} it follows that
\begin{equation*}
  \p_p(I^\bullet (R))\ge
  1-\left(  1- \prod_{i=1}^s
 \left[  1-\exp\left(-\Omega\left(kp^i\right)\right)  \right]\right)^{l/s}.
\end{equation*}
Now, if $k=\Omega(p^{-m})$ then 
$
\prod_{i=1}^m
 \left[  1-\exp\left(-\Omega\left(kp^i\right)\right)  \right]
 \ge \delta,
$
this proves (\ref{intfillingprob'}).

Finally, if $k\le (2/3) p^{-(m+1)}$, then for every $i\ge m+1$ we have $kp^i\le 2/3$, hence
\begin{equation*}
 \prod_{i=m+1}^s
 \left[  1-\exp\left(-\Omega\left(kp^i\right)\right)  \right]
 \ge \Omega\left(\prod_{i=m+1}^s kp^i\right)
 =\Omega\left(k^{s-m}p^{\sum_{i=m+1}^si}\right),
\end{equation*}
and (\ref{intfillingprob''}) follows by applying $1-q\ge e^{-2q}$ for $q$ small.
\end{proof}

\subsection{The upper bounds for $r\in \{c+3, \dots , a+c\}$}
Now, let us consider our $\mathcal N_r^{a,b,c}$-bootstrap percolation with $c+2<r\le a+c$, set
\[s:= r-c\ge 3.\]
This is a consequence of Lemma \ref{MainLemmafor>2}.
\begin{coro}\label{corosp}
Consider $\mathcal N_r^{a,b,c}$-bootstrap percolation and fix $m\in [s]\cup \{0\}$. If $p$ is small enough and $p^{-m}< h \le  p^{-(m+1)}$, then 
\begin{equation}\label{corospatt}
\p_p\left(I(l,h,w+1)|I(l,h,w)\right) \ge 
  1-\exp\left(-\Omega\left(h^{s-m}lp^{\sum_{i=m+1}^si}\right)\right).
\end{equation}
\end{coro}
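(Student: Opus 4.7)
My plan is to reduce the growth of one extra $e_3$-layer to a 2D problem and invoke Lemma~\ref{MainLemmafor>2}. Condition on $I(l,h,w)$ and, assuming $w\ge c$ (the small-$w$ case is similar), consider the fresh top layer $L:=[l]\times[h]\times\{w+1\}$. Since the full slab $[l]\times[h]\times[w]$ is infected, every site $v\in L$ already has the $c$ sites $v-e_3,\ldots,v-ce_3$ infected. Hence, for $I(l,h,w+1)$ to occur it suffices that the random set $A\cap L$ percolates in $L$ under the in-layer rule that infects a site once $s=r-c$ of its $2a+2b$ neighbours in $L$ are infected; this in-layer process is precisely $\mathcal N_{s}^{a,b}$-bootstrap percolation on the 2D rectangle $[l]\times[h]$.

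Next I compare $\mathcal N_{s}^{a,b}$ with the simpler family $\mathcal N_{s}^{s,s}$ treated in the lemma. Because $r\le a+c$ we have $s\le a\le b$, so the $\mathcal N_{s}^{s,s}$-neighbourhood is contained in the $\mathcal N_{s}^{a,b}$-neighbourhood, and every rule of $\mathcal N_{s}^{s,s}$ is a rule of $\mathcal N_{s}^{a,b}$. Consequently any set that percolates on $[l]\times[h]$ under $\mathcal N_{s}^{s,s}$ also percolates under $\mathcal N_{s}^{a,b}$, and therefore the probability that $A\cap L$ fills $L$ is at least $\p_p(I^{\bullet}([l]\times[h]))$ computed in the $\mathcal N_{s}^{s,s}$-process. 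Since $A\cap L$ is independent of $A\cap([l]\times[h]\times[w])$, this lower bound transfers to the conditional probability $\p_p(I(l,h,w+1)\mid I(l,h,w))$.

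It remains to apply Lemma~\ref{MainLemmafor>2} with $k=h$. The hypothesis $p^{-m}<h\le p^{-(m+1)}$ places $h$ in the regime where estimate~\eqref{intfillingprob''} is valid: the factor $2/3$ in the lemma only serves to ensure $kp^i\le 2/3$ so that $1-e^{-\Omega(kp^i)}\ge\Omega(kp^i)$, and here $hp^i\le p^{i-m-1}\le 1$ for $i\ge m+1$ likewise grants this linearisation, with the constants absorbed into the $\Omega(\cdot)$. Substituting $k=h$ into~\eqref{intfillingprob''} then yields exactly~\eqref{corospatt}. The only genuine step is the monotone coupling in the first two paragraphs; once that comparison is in place, the corollary is an immediate substitution into Lemma~\ref{MainLemmafor>2}.
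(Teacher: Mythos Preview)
Your argument is correct and is exactly the approach taken in the paper: reduce growth in the $e_3$-direction to the induced two-dimensional $\mathcal N_{s}^{a,b}$-process on the $[l]\times[h]$ face, observe that since $s=r-c\le a\le b$ every rule of $\mathcal N_{s}^{s,s}$ is a rule of $\mathcal N_{s}^{a,b}$, and then invoke Lemma~\ref{MainLemmafor>2} with $k=h$. Your explicit treatment of the independence of the fresh layer and your remark about absorbing the $2/3$ constant into the $\Omega(\cdot)$ are both fine and merely spell out details the paper leaves implicit.
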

\begin{proof}
The induced $\mathcal N_{r-c}^{a,b}$-process along the $e_3$-direction (on the $[l]\times [h]$ face) is dominated by the $\mathcal N_{s}^{s,s}$-process, since $s=r-c$ and $s\le a\le b$, so Lemma \ref{MainLemmafor>2} applies.
\end{proof}

This corollary tells us the cost of growing along the (easiest) $e_3$-direction, and we are also interested in computing the cost of growing along the $e_1$ and $e_2$ (harder) directions.\\
Note that $r>c\ge a+b$ in Theorem \ref{extendBeam}, so we just need to cover the cases $a+b<r\le a+c$,
where, all induced processes $\mathcal N_{r-c}^{a,b}$, $\mathcal N_{r-b}^{a,c}$ and $\mathcal N_{r-a}^{b,c}$ are supercritical.

\begin{lemma}[Regime supercritical $e_1$-process]\label{regimer>a+b}
Consider $\mathcal N_{r}^{a,b,c}$-bootstrap percolation with $a+b<r\le a+c$, and
fix integers $l,h,w\ge c$. If $p$ is small enough, then
\begin{enumerate}
    \item[\textup{(i)}] $\p_p\left(I(l,h+1,w)|I(l,h,w)\right)\ge
	\left(1-e^{-\Omega(p^{r-b}w)}\right)^{a}
	\left(1-e^{-\Omega(p^{r-(a+b)}w)}\right)^{l},$
    \item[\textup{(ii)}] $\p_p\left(I(l+1,h,w)|I(l,h,w)\right)\ge
	\left(1-e^{-\Omega(p^{r-a}w)}\right)^{b}
	\left(1-e^{-\Omega(p^{r-(a+b)}w)}\right)^{h}$.
\end{enumerate}
\end{lemma}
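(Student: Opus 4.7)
The plan is to follow the strategy of Lemma \ref{regimer>a+c} (i.e.\ Lemma 2.6 in \cite{DB20}), whose part (i) has an identical statement: once $R$ is filled, the induced two-dimensional bootstrap process on the newly added face is supercritical with a one-dimensional easy direction, so we will fill that face row by row along the easy direction via a one-dimensional bootstrap process.

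For part (i), condition on $I(l,h,w)$ and consider the added layer $L:=[l]\times\{h+1\}\times[w]$; since $L$ is disjoint from $R$, the restriction of $A$ to $L$ remains Bernoulli. Every $v=(i,h+1,k)\in L$ has its $b$ neighbours in the $-e_2$-direction already infected (they sit inside $R$), so $L$ runs an induced $\mathcal{N}_{r-b}^{a,c}$-bootstrap process, fed by $A\cap L$. The hypothesis $a+b<r\le a+c$ combined with $a\le b$ gives $a<r-b\le c$, so this 2D family is supercritical, with hard direction $\pm e_1$ and easy direction $\pm e_3$; we will fill the rows $\{i\}\times\{h+1\}\times[w]$ sequentially for $i=1,\ldots,l$.

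When rows $1,\ldots,i-1$ are already filled, the vertex $(i,h+1,k)$ has $b+\min(i-1,a)$ automatically infected neighbours (from $R$ via $-e_2$ and from the filled rows via $-e_1$), and so needs $r-b-(i-1)$ further infections if $i\le a$, or $r-(a+b)$ further infections if $i>a$, drawn from its $2c$ same-row $\pm e_3$ neighbours. This is a one-dimensional $\mathcal{N}_s^{c}$-bootstrap process on $[w]$ with $s\le c$, so any single $2c$-window carrying $s$ infections acts as a seed that propagates to the whole row (the inequality $s\le c$ lets a length-$s$ block infect the next site). Partitioning $[w]$ into $\lfloor w/(2c)\rfloor$ disjoint windows of length $2c$, the probability that no window is a seed is at most $\exp(-\Omega(p^{s}w))$, so the $i$-th row is filled with probability at least $1-\exp(-\Omega(p^{r-b}w))$ for $i\le a$ and at least $1-\exp(-\Omega(p^{r-(a+b)}w))$ for $i>a$; multiplying these bounds over the $l$ rows yields~(i).

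Part (ii) is the mirror image on the face $F:=\{l+1\}\times[h]\times[w]$: each vertex there has $a$ automatic neighbours in $-e_1$, so the induced 2D process is $\mathcal{N}_{r-a}^{b,c}$, again supercritical (now $b<r-a\le c$), with hard direction $\pm e_2$ and easy direction $\pm e_3$. Filling rows in the $e_2$-direction one by one by the same one-dimensional argument, with thresholds $r-a-(j-1)$ for $j\le b$ and $r-(a+b)$ for $j>b$, will yield the claimed bound. The only step that might look delicate is checking $r-a\le c$ and $r-b\le c$, both needed for the 1D propagation to succeed, but each follows at once from $r\le a+c$ and $a\le b$, so I do not anticipate any serious obstacle.
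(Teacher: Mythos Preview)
Your proof is correct and follows the same row-by-row strategy as the paper's reference (Lemma~2.6 in \cite{DB20}): on the new face, use the $b$ (resp.\ $a$) automatic neighbours from the filled block, then fill the rows sequentially, each via a one-dimensional $\mathcal N_s^{c}$-process seeded with probability $1-e^{-\Omega(p^{s}w)}$. One cosmetic point: your claim that ``any $2c$-window carrying $s$ infections acts as a seed'' is true but needs a short argument; it is cleaner (and closer to the paper) to ask for $s$ \emph{adjacent} infected vertices in each row, which already gives the required $\Omega(p^{s}w)$ exponent.
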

\begin{proof}
See Lemma 2.6 in \cite{DB20}.
\end{proof}

Next, we show the candidate to be our critical droplet.

\begin{lemma}\label{lemagrow}
Set $L=\exp(\Gamma p^{-s})$ and fix $\varepsilon>0$, $l=p^{-(1+\varepsilon)}$, and \[R:=[l]\times [l^{s-1}]\times [l^s].\] 
Conditionally on $I^\bullet(R)$,
the probability that $[L]^3$ is internally filled goes to $1$ as $p\to 0$.
\end{lemma}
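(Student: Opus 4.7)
The plan is to grow $R$ to $[L]^3$ in four successive stages, each succeeding with conditional probability tending to $1$ as $p \to 0$. Fix a large constant $C = C(a,b,s)$ (e.g.\ $C = 2s+b+1$) and set $w_\star = p^{-C}$.

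In the first stage, I grow the $e_3$-direction from $l^s$ up to $w_\star$ by iterating Corollary \ref{corosp}. Since $h = l^{s-1}$ lies in $(p^{-(s-1)}, p^{-s}]$ for $\varepsilon$ small, the applicable index is $m = s-1$, and each added column succeeds with probability at least $1 - \exp(-\Omega(l^s p^s)) = 1 - \exp(-\Omega(p^{-s\varepsilon}))$. Since only $p^{-C}$ columns are added, the total failure probability is at most $p^{-C}\exp(-\Omega(p^{-s\varepsilon}))$, which tends to $0$. This produces the filled block $[l]\times[l^{s-1}]\times[w_\star]$.

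In the second and third stages, with $w = w_\star$ large, I apply Lemma \ref{regimer>a+b}(i) to grow $e_2$ from $l^{s-1}$ up to $L$, and then Lemma \ref{regimer>a+b}(ii) to grow $e_1$ from $l$ up to $L$. The per-step probabilities are of the form $(1-e^{-\Omega(p^{s+a-C})})^{a}(1-e^{-\Omega(p^{s-C})})^{l}$ and $(1-e^{-\Omega(p^{s+b-C})})^{b}(1-e^{-\Omega(p^{s-C})})^{L}$ respectively; provided $C \ge 2s+b+1$ (and using $b \ge a$), all the relevant exponents $p^{s+a-C}, p^{s+b-C}, p^{s-C}$ dominate $\log L = \Gamma p^{-s}$ in magnitude, and union-bounding over the $L$ steps of each stage yields failure probability $\to 0$. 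After these stages the filled block is $[L] \times [L] \times [w_\star]$.

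The fourth stage grows $e_3$ from $w_\star$ up to $L$. The face orthogonal to $e_3$ is now $[L]^2$ with $L \ge p^{-s}$, so Lemma \ref{MainLemmafor>2}, formula \eqref{intfillingprob'}, applies with $m = s$, giving per-column success probability at least $1 - (1-\delta)^{L/s}$, which is super-exponentially close to $1$. Union-bounding over $L$ columns gives failure $\to 0$, completing the growth to $[L]^3$. The main obstacle is coordinating the parameters: $C$ must be chosen large enough that Stages 2 and 3 have per-step failure probabilities exponentially smaller than $L^{-1}$, yet small enough (and in particular, a fixed constant) so that Stage 1 remains polynomially cheap. Taking $\Gamma$ sufficiently small (depending on the implicit constants from Corollary \ref{corosp} and Lemma \ref{regimer>a+b}) closes the loop.
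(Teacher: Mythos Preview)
Your proof is correct and follows essentially the same approach as the paper: the paper's one-line proof simply says ``Follows from Lemma \ref{regimer>a+b} and Corollary \ref{corosp} with $m \ge s-1$,'' and your four-stage argument is a faithful fleshing-out of exactly this---using Corollary \ref{corosp} with $m=s-1$ to extend in the $e_3$-direction, then Lemma \ref{regimer>a+b} to extend in $e_1$ and $e_2$, then Corollary \ref{corosp}/Lemma \ref{MainLemmafor>2} with $m=s$ to finish in $e_3$.

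One correction, however: your final sentence is wrong and should be dropped. With $C=2s+b+1$ the relevant exponents $C-s-a$, $C-s-b$, $C-s$ all strictly exceed $s$, so the per-step failure probabilities $e^{-\Omega(p^{-(C-s-b)})}$, etc., are $\exp(-p^{-s-\Omega(1)})$, which beats $L^{O(1)}=\exp(O(\Gamma p^{-s}))$ for \emph{every} fixed $\Gamma$. No smallness of $\Gamma$ is needed---and indeed it must not be assumed, since in the subsequent proof of Proposition \ref{upper=} one takes $\Gamma$ large.
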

\begin{proof}
Follows from Lemma \ref{regimer>a+b} and Corollary \ref{corosp} with $m\ge s-1$.
\end{proof}	

Now, we prove the upper bound for the critical length.
\begin{proof}[Proof of Proposition \ref{upper=} \textup{($r \le a+c$)}]
	Set $L=\exp(\Gamma p^{-s})$, where $\Gamma$ is a constant to be chosen. Fix a small $\varepsilon >0$, and consider the rectangle
	\[R:=\left[p^{-(1+\varepsilon)}\right]\times [p^{-(1+\varepsilon) (s-1)}]\times [p^{-(1+\varepsilon) s}] \subset[L]^3.\]
As usual, by using Lemma \ref{lemagrow}, considering disjoint copies of $R$ in $[L]^3$ and taking $\Gamma>0$ large, it is enough to show that there exists a constant $C>0$ such that
	\begin{equation}\label{spcd}
	\p_p(I^\bullet(R))\ge \exp(-Cp^{-s}).
	\end{equation}
	To do so, set $l=p^{-\varepsilon}$, then for every 
	$k=1,\dots,n:=p^{-1}$ set
	\[l_k=kl,\ h_k=l_k^{s-1},\ w_k=l_k^s,\ \ R_k=[l_k]\times [h_k]\times[w_k],\]
	\[R_k'=[l_k]\times [h_k]\times[w_{k+1}] \text{ and}\ R_k''=[l_k]\times [h_{k+1}]\times[w_{k+1}].\]
	Note that $R_n=R$,
	$l_{k+1}=l_k+l$, $h_{k+1}=h_k+O(k^{s-2}l^{s-1})$ and $w_{k+1}=w_k+O(k^{s-1}l^{s})$,
	so by Lemma \ref{regimer>a+b} and Corollary \ref{corosp} with $m\le s-1$,
	we have
	
	\begin{align*}
	\p_p(I^\bullet(R_n)) & \ge \p_p(R_1\subset A)\prod_{k=1}^{n-1}
	\p_p(I^\bullet(R_k')|I^\bullet(R_k))
	\p_p(I^\bullet(R_k'')|I^\bullet(R_k'))
	\p_p(I^\bullet(R_{k+1})|I^\bullet(R_{k}''))\\
	& \ge p^{|R_1|}\prod_{m=0}^{s-1}\prod_{k=l^{-1}p^{-\frac{m}{s-1}}}^{l^{-1}p^{-\frac{m+1}{s-1}}} \left( 1-\exp\left[-\Omega\left(h_k^{s-m}l_kp^{\sum_{i=m+1}^si}\right)\right]\right)^{O(k^{s-1}l^s)}\\
	& \ \ \ \times 
\prod_{k=1}^{n}	\left[ 
	\left(1-e^{-\Omega(p^{r-b}w_{k+1})}\right)^{r}
	\left(1-e^{-\Omega(p^{s}w_{k+1})}\right)^{l_k}
	\right]^{O(k^{s-2}l^{s-1})}\\
	& \ \ \ \times 
\prod_{k=1}^{n}	\left[ 
	\left(1-e^{-\Omega(p^{r-a}w_{k+1})}\right)^{r}
	\left(1-e^{-\Omega(p^{s}w_{k+1})}\right)^{h_{k+1}}
	\right]^{l}\\
	& \ge p^{|R_1|}\prod_{m=0}^{s-1}\prod_{k\ge 0} \left( 1-\exp\left[-\Omega\left((kl)^{(s-1)(s-m)+1}p^{\sum_{i=m+1}^si}\right)\right]\right)^{O(k^{s-1}l^s)}\\
	& \ \ \ \times 
	p^{O((nl)^{s-1})} \times p^{O(nl)}
\prod_{k=1}^{n}	\left[ 
1-e^{-\Omega(p^{s}(kl)^s)}
	\right]^{O(k^{s-1}l^{s})}
\end{align*}
\begin{align*}
    	& \ge \prod_{m=0}^{s-1}\exp\left(-\Omega\left(p^{-{s\frac{\sum_{i=m+1}^s i }{(s-1)(s-m)+1}}}\int_{0}^{\infty}z^{s-1}[-\log(1-e^{-z^{(s-1)(s-m)+1}})]\,dz\right)\right)\\
	& \ \ \ \times 
	p^{O(p^{-(1+\varepsilon)(s-1)})}
\exp\left(-\Omega\left(p^{-s}\int_{0}^{\infty}z^{s-1}[-\log(1-e^{-z^s})]\,dz\right)\right)\\
& \ge \exp(-Cp^{-s}),
	\end{align*}
	for some constant $C>0$, since $m\le s-1$ implies $(s-m-1)(s-m-2)\ge 0$, so that $\sum_{i=m+1}^s i \le (s-1)(s-m)+1$, and we are finished.
\end{proof}


\section{Lower bounds}\label{SectionLower}  
To prove the lower bounds, we will use a technique introduced in \cite{DB20} called the {\it beams process}. 
\subsection{Subcritical two-dimensional families}
In this section, we recall an exponential decay property that holds for subcritical families $\UU$ with $\Ss(\UU)=S^1$. Consider
$\UU$-bootstrap percolation in $\Zdd$ with $\UU$ subcritical.

\begin{defi}
We define the {\it component} (or {\it cluster})
of $0\in\Zdd$ as the connected component containing $0$ in the graph induced by 
$\langle A\rangle_\UU$, and we denote it by $\mathcal K=\mathcal K(\UU,A)$.
If $0\notin \genA_\UU$, then we set $\mathcal K=\emptyset$.
\end{defi}
The following result was proved in \cite{DB20}.

\begin{theor}[Exponential decay for the cluster size]\label{expdecay}
Consider subcritical $\UU$-bootstrap percolation on $\Zdd$ with $\Ss(\UU)=S^1$.
If $\varepsilon >0$ is small and $C=C(\varepsilon):=\log(\frac{1}{\varepsilon})$, then
	\begin{equation}
	\p_\varepsilon(|\mathcal K|\ge n)\le 
 \varepsilon^{\Omega(n)}=e^{-\Omega(Cn)},
	\end{equation}
	for every $n\in\mathbb N$.
\end{theor}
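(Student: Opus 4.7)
The plan is to combine a deterministic density lemma with a Peierls-style counting argument. The key structural input from $\mathcal{S}(\mathcal{U}) = S^1$ is that for every $u \in S^1$, every rule $X \in \mathcal{U}$ contains an element $x$ with $\langle x, u \rangle \ge 0$; equivalently, by Hahn--Banach separation, $0 \in \operatorname{conv}(X)$ for every rule $X$. In other words, the rules are \emph{centered} around the origin, a condition that is much stronger than plain subcriticality.

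The first step is the density lemma: there exists $\delta = \delta(\mathcal{U}) > 0$ such that for every finite cluster $\mathcal{K}$ of $0$ in $\mathbb{Z}^2$,
\[
\bigl|A \cap \mathcal{K}\bigr| \ge \delta\, |\mathcal{K}|.
\]
My approach would be a \emph{direction-flagging} argument. Each $v \in \mathcal{K} \setminus A$ is infected by some rule $X$, and since $0 \in \operatorname{conv}(X)$ the infected sites $v + X \subset \langle A \rangle_{\mathcal{U}}$ surround $v$; in particular, for a fixed finite set $U \subset S^1$ of directions spanning all of $S^1$, every such $v$ has a rule-neighbor strictly forward in each direction $u \in U$. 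Assigning to each $v \in \mathcal{K} \setminus A$ a ``not-attacked'' direction in which $v$ fails to be an extreme layer of $\mathcal{K}$, and invoking the isoperimetric profile of finite $\mathcal{U}$-closed sets (which the centered-rule property forces to be ``thick''), a double-counting yields the linear lower bound.

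The second step is the probabilistic union bound. The number of connected subsets of $\mathbb{Z}^2$ of size $n$ containing $0$ is at most $\lambda^n$ for some universal lattice-animal constant $\lambda$. For each candidate $K$ of size $n$, the event $\{\mathcal{K} = K\}$ forces at least $\delta n$ initial infections inside $K$, so by a standard binomial tail bound
\[
\p_\varepsilon\bigl(|A \cap K| \ge \delta n\bigr) \;\le\; \binom{n}{\delta n}\, \varepsilon^{\delta n} \;\le\; (C_1 \varepsilon)^{\delta n}.
\]
Summing over shapes and sizes $m \ge n$, and taking $\varepsilon$ small enough that $\lambda (C_1 \varepsilon)^{\delta} < 1$, gives
\[
\p_\varepsilon\bigl(|\mathcal{K}| \ge n\bigr) \;\le\; \sum_{m \ge n} \lambda^m (C_1 \varepsilon)^{\delta m} \;\le\; \varepsilon^{\Omega(n)} \;=\; e^{-\Omega(C n)},
\]
with $C = \log(1/\varepsilon)$, as required.

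The main obstacle is the density lemma. A naive ``peel the extreme points of $\operatorname{conv}(\mathcal{K})$'' argument only produces $\Omega\!\bigl(\sqrt{|\mathcal{K}|}\bigr)$ initial infections rather than $\Omega(|\mathcal{K}|)$, so the real subtlety is in controlling the interior of $\mathcal{K}$. There, the centered-rule property has to be combined with a careful accounting of how many $\mathcal{K}$-neighbors each rule demands; this is essentially a discrete isoperimetric statement for $\mathcal{U}$-closed sets under the hypothesis $\mathcal{S}(\mathcal{U}) = S^1$, and is the only non-routine ingredient.
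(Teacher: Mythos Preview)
The paper does not prove this theorem here; it simply cites Theorem~4.11 of \cite{DB20}. So there is no in-paper argument to compare against, and the question is whether your outline stands on its own.

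Your overall architecture --- a deterministic ``density lemma'' followed by a Peierls union bound over lattice animals --- is a reasonable plan, and your observation that $\mathcal{S}(\mathcal{U})=S^1$ is equivalent to $0\in\operatorname{conv}(X)$ for every rule $X\in\mathcal{U}$ is correct and is indeed the relevant structural input. However, the density lemma as you state it is \emph{false}. Take $\mathcal{U}=\{X\}$ with the single rule $X=\{(-2,0),(2,0)\}$; then $0\in\operatorname{conv}(X)$ and $\mathcal{S}(\mathcal{U})=S^1$. Let
\[
A=\{(-2,k):0\le k\le n\}\cup\{(2,k):0\le k\le n\}.
\]
Then $\langle A\rangle_{\mathcal U}=A\cup\{(0,k):0\le k\le n\}$, and in the nearest-neighbour graph on $\mathbb{Z}^2$ the three vertical segments at $x=-2,0,2$ are distinct connected components of $\langle A\rangle_{\mathcal U}$. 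Hence the cluster of the origin is $\mathcal{K}=\{(0,k):0\le k\le n\}$, which contains \emph{no} points of $A$: we have $|A\cap\mathcal{K}|=0$ while $|\mathcal{K}|=n+1$. The point is that sites in $\mathcal{K}$ may be infected using sites of $\langle A\rangle$ that lie outside $\mathcal{K}$ (within rule-range but not graph-adjacent), so $A\cap\mathcal{K}$ need not witness anything.

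A repaired statement would replace $A\cap\mathcal{K}$ by $A\cap B_D(\mathcal{K})$, where $D$ is the maximal rule diameter, or equivalently would coarsen the notion of connectivity so that vertices within distance $D$ are adjacent (note that the paper does exactly this for $\mathbb{Z}^3$ in Definition~\ref{strconn}). With that correction the Peierls step still goes through, since $|B_D(\mathcal{K})|=O(|\mathcal{K}|)$. But even for the corrected version you have not supplied a proof: the ``direction-flagging'' and ``discrete isoperimetric'' sketch is not an argument, and you yourself note that peeling extreme points only yields $\Omega(\sqrt{|\mathcal{K}|})$ seeds. That linear lower bound on the number of seeds is precisely the substantive content of the theorem, and your proposal leaves it open.
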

\begin{proof}
See Thorem 4.11 in \cite{DB20}.
\end{proof}
Observe that $\Ss(\mathcal N_{m}^{a,b})=S^1$   if and only if $m\ge a+b+1$,
in particular, our exponential decay result (Theorem \ref{expdecay}) holds for these families.

\subsection{The beams process}
From now on we set
\begin{equation}
m:=a+b+1. 
\end{equation}

\begin{defi}\label{beam}
A {\it beam} is a finite subset of $\Zdt$ of the form $H\times[w]$, where $H\subset\Zdd$ is 
 connected and $\langle H\rangle_{\mathcal N_{m}^{a,b}}=H$.
\end{defi}

In order to introduce the beams process, we need more definitions.

\begin{defi}\label{genbeam}
Given a beam $H\times[w]$ and sets $S_1,S_2\subset  H\times[w]$, we say that $H\times[w]$ is {\it generated by} $S_1\cup S_2$ if the sets $H_1,H_2\subset H$ given by
 \[H_i:=\{x\in R: (\{x\}\times[w])\cap S_i\ne\emptyset\},\ \ \ i=1,2,\]    
are connected and there exists a path $P\subset H$  with minimal size ($P$ could be $\emptyset$), connecting $H_1$ to $H_2$, such that $H=\langle H_1\cup H_2\cup P\rangle_{\mathcal N_{m}^{a,b}}$.
Moreover, we denote \[B(S_1\cup S_2):=H\times [w].\]
\end{defi}
In this definition $\langle S_1\cup S_2\rangle\subset H\times[w]$ for each $r\ge m$, and generated beams could depend on the choice of the path $P$. However, such minimal paths are not relevant for our purposes.

\begin{exam}
In Figure \ref{figBEAM} we show (disconnected) sets $S_1, S_2$ on the picture to the left, and the beam $B(S_1\cup S_2)$ with respect to the subcritical family $\mathcal N_{4}^{1,2}$ to the right. $S_1$ consists of the left-most isolated vertex union the copy of $\{e_3,2e_3,3e_3\}$ (three consecutive vertices) on the top right-most side, while $S_2$ consists of all remaining vertices.\\
Following the notation in Definition \ref{genbeam}, note that $H_1$ and $H_2$ are connected, and we can take $P=\emptyset$ since $\langle H_1\cup H_2\rangle_{\mathcal N_{4}^{1,2}}=H$ is already connected.
\begin{figure}[ht]
	\includegraphics[width=5cm, height=3.5cm]{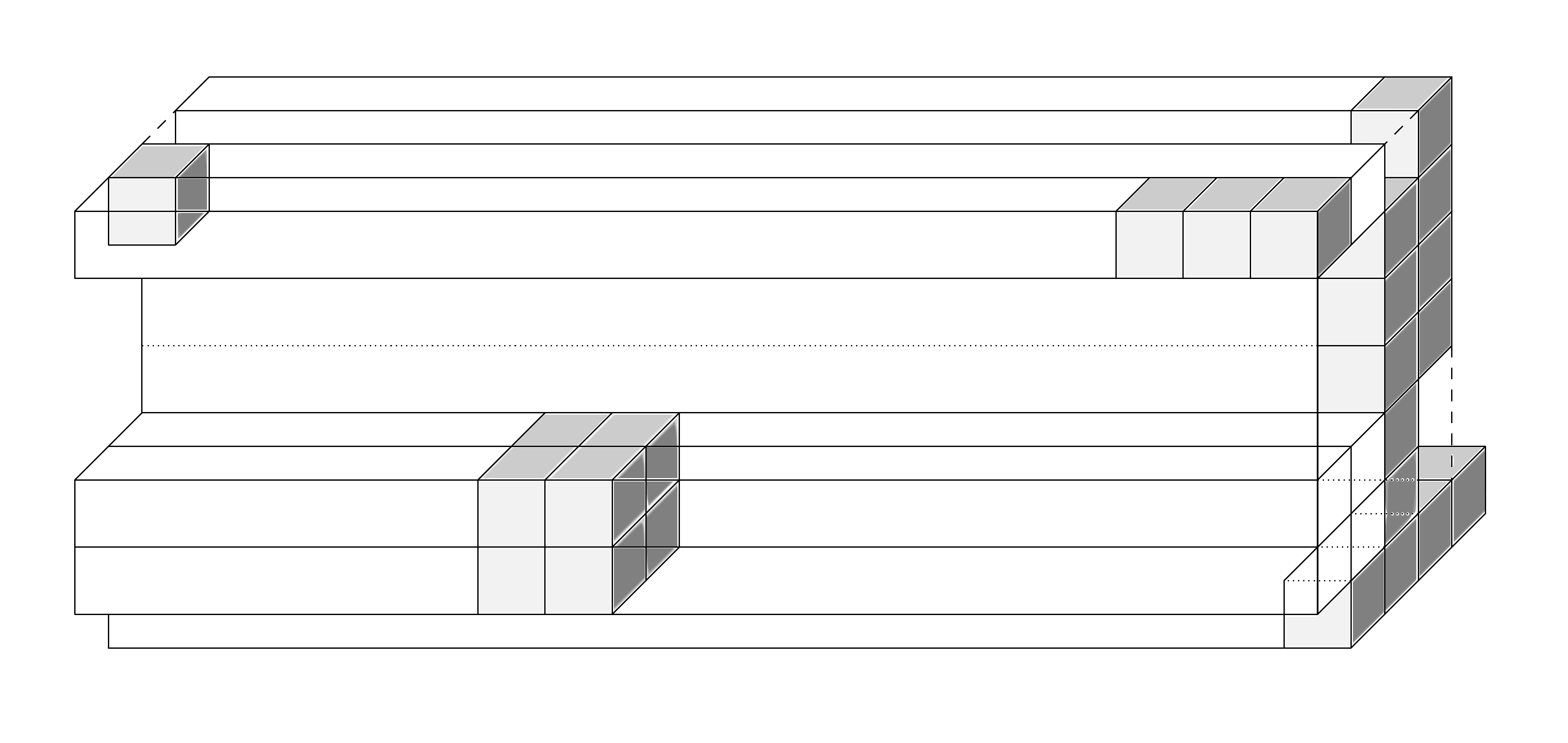} 
\hspace{2.5cm}
	\includegraphics[width=5cm, height=3.5cm]{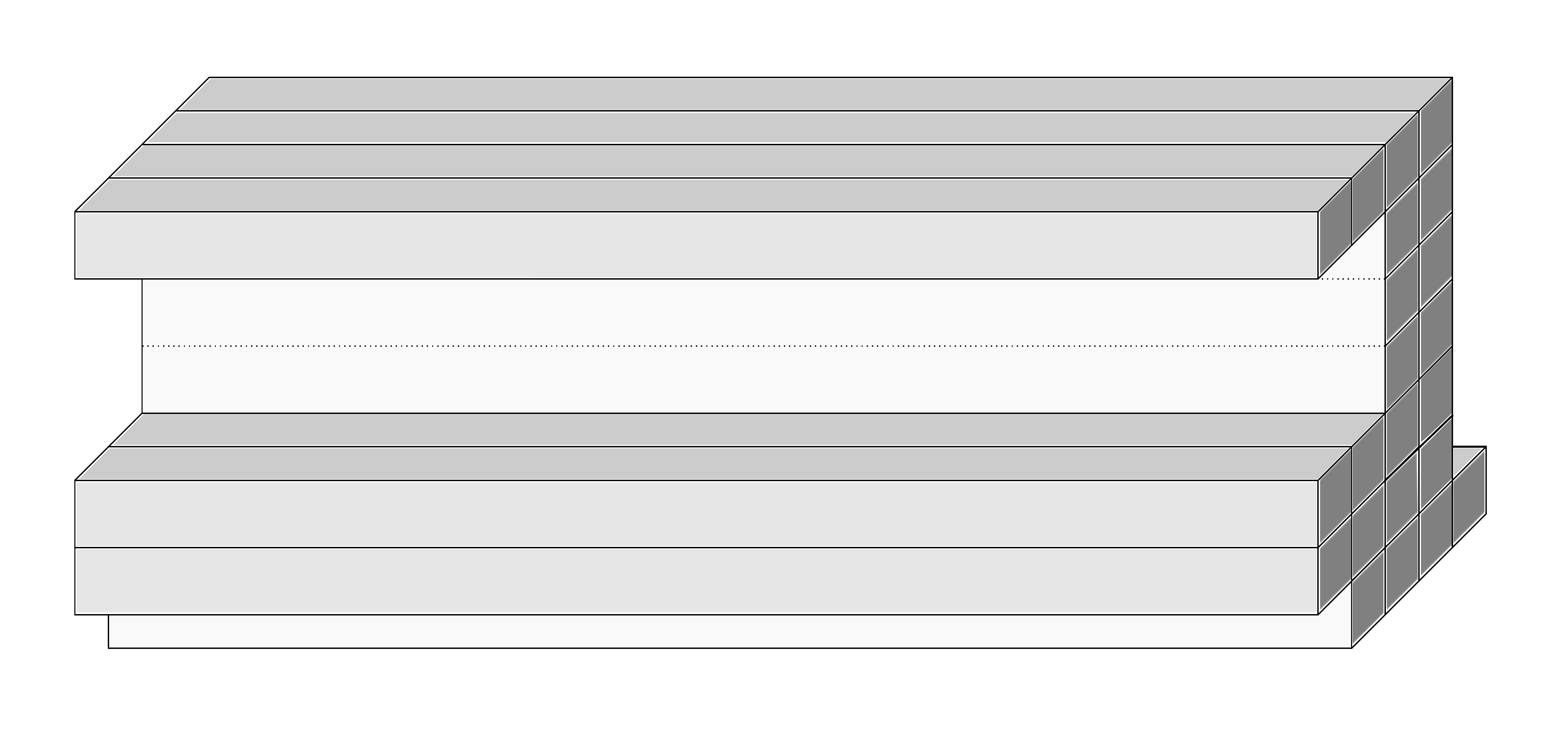}
	\caption{A generated beam w.r.t. the subcritical family $\mathcal N_{4}^{1,2}$.}
	\label{figBEAM}
\end{figure}
\end{exam}

Next, let us consider the following coarser process.
\begin{defi}[Coarse bootstrap percolation]\label{coarse}
Partition $[L]^2$ as $L^2/(b+1)^2$ copies of
 $\boxplus:=[b+1]^2$ in the obvious way, and think of $\boxplus$ as a single vertex in the new scaled grid $[L/(b+1)]^2$. 
 Given a two-dimensional family $\UU$, suppose we have some fully infected copies of $\boxplus\in [L/(b+1)]^2$ and denote this initially infected set by $A$, then we define
 {\it coarse} $\UU$-bootstrap percolation to be the result of applying $\UU$-bootstrap percolation
 to the new rescaled vertices. We denote the closure of this process by $\genA_{b}$.
\end{defi}

To avoid trivialities, we assume that $b+1$ divides $L$. 
\begin{defi}\label{coarbeam}
A {\it coarse beam} is a finite set 
of the form $H\times[w]$,
 where $H\subset\Zdd$ is connected and $[ H]_{b}=H$
 under coarse $\mathcal N_{m}^{a,b}$-bootstrap percolation.
\end{defi}
\begin{nota}\label{constru2}
Given sets $S_1,S_2\subset [L]^2\times[L]$, we partition $[L]^2$ as in Definition \ref{coarse} and denote by $B_b(S_1\cup S_2)$ the coarse beam generated by $S_1\cup S_2$ which is constructed in the (coarse) analogous way, as we did in Definition \ref{genbeam}, using coarse paths when needed. Note that every coarse beam is a beam.
\end{nota}
\begin{defi}\label{strconn}
Let $G^3=(V,E)$ be the graph with vertex set $[L]^3$ and edge set given by
$E=\{uv: \|u-v\|_\infty \le 2c\}$.   
 We say that a set $S\subset [L]^3$ is {\it strongly connected}
 if it is connected in the graph $G^3$.
\end{defi}

We use the beams process to show an Aizenman-Lebowitz-type lemma which says that when $[L]^3$
is internally filled, then it contains {\it covered} beams of all relevant intermediate sizes (see Lemma \ref{ALlema2} below).

\begin{defi}[The coarse beams process]
Let $A=\{x_1,\dots,x_{|A|}\}\subset [L]^3$ and fix $r\ge c+1$.
 Set $\mathcal B:=\{S_1,\dots,S_{|A|}\}$, where $S_i=\{x_i\}$ for each $i=1,\dots,|A|$, and
repeat until STOP:
 \begin{enumerate}
  \item If there exist distinct beams $S_{1},S_{2}\in\mathcal B$ such that
  \[S_{1}\cup S_{2}\]
  is strongly connected, and $\langle S_{1}\cup S_{2} \rangle\ne 
  S_{1}\cup  S_{2}$,
  then  remove them from $\mathcal B$,
  and replace by a coarse beam $B_b(S_{1}\cup S_{2})$. 
  \item If there do not exist such a family of sets in $\mathcal B$, then STOP.
 \end{enumerate}
We call any beam $S=B_b(S_{1}\cup  S_{2})$ added to the collection $\mathcal B$ a {\it covered beam}, and denote the event that $S$ is covered by
$I_b^{\text{\ding{86}}}(S).$
 \end{defi}

Consider $\mathcal N_{r}^{a,b,c}$-bootstrap percolation with $r\ge c+1$, and let $\kappa, \lambda$ be large constants depending on $b,c$ and $r$. The following is a beams version of the Aizenman-Lebowitz lemma.

\begin{lemma}\label{ALlema2}
 If $[L]^3$ is internally filled then for every $h,k=\kappa,\dots,L$, there exists
 a covered (coarse) beam
 $H\times[w]\subset [L]^3$ satisfying $w\le \lambda k$, $|H|\le \lambda h$, and
 either $w\ge k$ or $|H|\ge h$.
\end{lemma}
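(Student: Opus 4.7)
The plan is to run an Aizenman--Lebowitz style doubling argument on the coarse beams process. The key input is a multiplicative control on how a single merge step can grow a beam: when two beams $S_i = H_i \times [w_i]$, $i=1,2$, are merged into $B = H \times [w] = B_b(S_1 \cup S_2)$, I claim that
\[
w \le w_1 + w_2 + 2c, \qquad |H| \le C(|H_1| + |H_2|),
\]
for some $C = C(a,b,c)$. The height bound is immediate from strong connectivity in $G^3$: the $e_3$-projections of $S_1, S_2$ must lie within $\ell_\infty$-distance $2c$, so the third-coordinate range of $B$ is the concatenation of those of $S_1, S_2$ up to an $O(c)$ overlap. For the cross-section, the same strong connectivity forces the minimal coarse connecting path $P \subset \Zdd$ in Definition \ref{genbeam} to have constant length $|P| = O(c)$, and then the coarse closure $\langle H_1 \cup H_2 \cup P\rangle_{\mathcal N_m^{a,b}}$ inflates its seed by at most a constant factor. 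This last inflation bound is where subcriticality of $\mathcal N_m^{a,b}$ (with $m = a+b+1$, whence $\Ss = S^1$) is used essentially: at the coarse scale the closure of a finite connected seed in a fully-stable subcritical two-dimensional family admits a deterministic linear size bound, in the spirit of Theorem \ref{expdecay}.

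The second input I need is that $[L]^3$ being internally filled by $A$ forces the coarse beams process to produce at least one covered beam $B^\star = H^\star \times [w^\star]$ with $|H^\star| \ge L^2/(b+1)^2$ or $w^\star \ge L$. I would prove this by tracking how the current collection $\mathcal B$ encodes the evolving $\mathcal N_r^{a,b,c}$-infection: an induction on $t$ shows that every vertex of $A_t$ lies inside some beam of $\mathcal B$, because each new $\mathcal N_r^{a,b,c}$-infection is witnessed by a strongly-connected pair of existing beams whose coarse $\mathcal N_m^{a,b}$-union is non-stable, forcing a merge in the beams process. Iterating until $A_t = [L]^3$ must produce a beam of at least one of the two claimed sizes, since no single beam can remain small on both scales and still cover $[L]^3$.

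With both inputs in hand the doubling argument is immediate. Fix $h, k \in \{\kappa, \dots, L\}$ with $\kappa$ large enough to absorb the additive $O(c)$ terms, and let $\tau$ be the first step in the process at which a covered beam $B$ satisfies $|H(B)| \ge h$ or $w(B) \ge k$; this $\tau$ exists by the second input since $L \ge h, k$. Writing $B = B_b(S_1 \cup S_2)$, minimality of $\tau$ gives $|H(S_i)| < h$ and $w(S_i) < k$ for $i=1,2$, so
\[
w(B) \le 2k + 2c \le \lambda k, \qquad |H(B)| \le 2C h \le \lambda h,
\]
for $\lambda = \lambda(a,b,c,r)$ taken large enough. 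This $B$ is the covered beam claimed in the lemma. The main obstacle is the deterministic multiplicative inflation bound in the first input --- everything else is standard doubling bookkeeping; this bound should follow from subcriticality of $\mathcal N_m^{a,b}$ with full stable set $S^1$, but it is the technical heart of the argument.
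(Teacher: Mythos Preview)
The paper does not prove this lemma itself; it cites Lemma~5.13 of \cite{DB20}. Your outline is the standard Aizenman--Lebowitz doubling argument transplanted to the beams setting, and that is almost certainly what the cited proof does. The overall architecture---multiplicative control on a single merge, existence of arbitrarily large covered beams when $[L]^3$ is internally filled, then first-passage at the scale $(h,k)$---is exactly right.

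Two points deserve more care. For your first input, the bound $w\le w_1+w_2+O(c)$ is immediate, and you are right that the cross-section bound $|H|\le C(|H_1|+|H_2|)$ is the technical heart; but the justification is not really ``in the spirit of Theorem~\ref{expdecay}'' (a probabilistic exponential-decay statement). What is actually needed is the deterministic fact that for $m=a+b+1$ the $\mathcal N_m^{a,b}$-closure of a finite connected seed has size linear in the seed, which follows from the ``three-of-four-arms'' structure of that rule but is a separate lemma. For your second input, there is a small misreading---the merge criterion in the coarse beams process is $\langle S_1\cup S_2\rangle_{\mathcal N_r^{a,b,c}}\ne S_1\cup S_2$, the three-dimensional closure, not the coarse two-dimensional one---and a more substantive gap: your induction claims that each new $\mathcal N_r^{a,b,c}$-infection $v$ forces a \emph{pairwise} merge, but $v$ may draw its $r$ neighbours from three or more beams, and pairwise closedness of beams does not in general imply closedness of a larger union. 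What rescues this is the product structure: since each beam $H_i\times[w_i]$ has $H_i$ finely $\mathcal N_m^{a,b}$-closed and $r\ge\max(c+1,a+b+1)$, one can show that the beams meeting $v+N_{a,b,c}$ split into an ``$e_3$-type'' set and an ``$(e_1,e_2)$-type'' set, and a single pair of opposite types already witnesses the instability. This reduction is where the argument needs to be made precise.
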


\begin{proof}
See Lemma 5.13 in \cite{DB20}.
\end{proof}

Let $\mathcal B_{h,k}=\mathcal B_{h,k}(L)$ be the collection of all connected sets of the form $H\times[w]$ contained in $[L]^3$ satisfying $|H|\le h$ and $w\le k$.
The proof of Lemma 5.2 in \cite{DB20} implies the following upper bound for $\mathcal B_{h,k}$. 
\begin{lemma}[Upper bound on the number of beams]\label{nofbeams}
	For all $h,k\le L$,
	\[|\mathcal B_{h,k}|\le L^4(3e)^{h}.\]
\end{lemma}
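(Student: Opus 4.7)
The plan is to parameterize each beam in $\mathcal{B}_{h,k}$ by the translation class of its cross-section $H$ together with its placement inside $[L]^3$, and to bound each ingredient separately. Once these two counts are in hand, the estimate $L^4(3e)^h$ follows by multiplication.

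First I would fix a canonical reference vertex for each beam, e.g.\ the lex-smallest vertex $v\in H$ together with the smallest $e_3$-coordinate $u$ occupied by $[w]$. Once the shape of $H$ (its translation class in $\Zdd$) is fixed, a beam $H\times[w]\subset[L]^3$ is uniquely determined by the triple $(v,u,w)$. There are at most $L^2$ choices for $v$, at most $L$ for $u$, and $k\le L$ for $w$, so the placement contributes at most $L^4$ possibilities once the shape of $H$ is specified.

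The heart of the argument is then to bound the number of shapes, namely the number of connected subsets of $\Zdd$ of size at most $h$ containing a fixed vertex, by $(3e)^h$. This is a standard lattice-animal count: pick a spanning tree of $H$ rooted at the reference vertex and encode it by a depth-first walk. At each forward step one has at most three previously unexplored neighbours in $\Zdd$ (the direction from which the walk entered the current vertex being excluded), giving a $3^h$ contribution, while a ballot/Catalan-style estimate on the admissible tree shapes contributes the remaining $e^h$ (via Stirling). Summing over sizes $n\le h$ only introduces a geometric factor, which is absorbed into $(3e)^h$.

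Multiplying the placement count $L^4$ by the shape count $(3e)^h$ yields $|\mathcal{B}_{h,k}|\le L^4(3e)^h$, as claimed. The main (and essentially only) obstacle is obtaining the precise branching constant: a naive bound would give $4^h$ from the degree of $\Zdd$, and one has to exploit that in a DFS traversal the parent direction is always excluded in order to sharpen this to $3^h$. Since the combinatorics is identical to that carried out in Lemma~5.2 of \cite{DB20}, I would simply mirror (or appeal to) that argument rather than rederive it from scratch.
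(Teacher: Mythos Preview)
Your proposal is correct and matches the paper's approach: the paper's ``proof'' is simply a reference to Lemma~5.2 of \cite{DB20}, and your sketch (placement contributing $L^4$, connected cross-sections of size $\le h$ rooted at a fixed vertex contributing $(3e)^h$ via the standard lattice-animal bound) is exactly the content of that lemma. Your final sentence, appealing directly to \cite{DB20}, is in fact all the paper itself says.
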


\subsection{The lower bounds}
The proofs in this section work for all values $r>a+b$. However, by \eqref{vefey}, they are useless when $r>b+c$.

\subsubsection{The case $c=a+b$}
In this section we prove the following.
\begin{prop}\label{lower3}
Under $\mathcal N_{r}^{a,b,c}$-bootstrap percolation with $r\in\{c+2,\dots,b+c\}$ and $c=a+b$, there is a constant $\gamma=\gamma(a,b)>0$
such that, if 
\[L<\exp(\gamma p^{-(r-c)}),\]
then 
$\p_p[I^{\bullet}([L]^3)]\to 0,\ as\ p\to 0.$
\end{prop}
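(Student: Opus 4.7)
My plan is to run the Aizenman--Lebowitz-type argument on the coarse beams process developed in \cite{DB20}, combining Lemma~\ref{ALlema2} (which reduces $I^\bullet([L]^3)$ to the existence of a covered coarse beam at an intermediate scale) with Theorem~\ref{expdecay} (exponential decay for clusters of the subcritical two-dimensional family $\mathcal{N}_{a+b+1}^{a,b}$). The hypothesis $c=a+b$ will enter crucially through the identity $m=a+b+1=c+1$, which is what makes the subcritical coupling described below possible.

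Setting $s := r-c \in \{2,\ldots,b\}$, I would first fix scales $h := \alpha p^{-s}/\log(1/p)$ and $k := \beta p^{-s+\eta}$ with small positive constants $\alpha,\beta,\eta$ to be chosen at the end, and set $L := \lfloor\exp(\gamma p^{-s})\rfloor$. By Lemma~\ref{ALlema2}, on the event $I^\bullet([L]^3)$ there exists a covered coarse beam $H\times[w]\subset[L]^3$ with $|H|\le\lambda h$, $w\le\lambda k$, and either $|H|\ge h$ or $w\ge k$. The central probabilistic input I would establish is the per-beam bound
\[
\p_p\bigl[I_b^{\text{\ding{86}}}(H\times[w])\bigr] \;\le\; \bigl(C(wp)^s\bigr)^{\Omega(|H|)},
\]
obtained by coupling the coarse beams process on $H\times[w]$ with coarse $\mathcal{N}_m^{a,b}$-bootstrap percolation on a projected set $\pi(A)\subset H$: a coarse box $\boxplus\subset H$ is declared infected in $\pi(A)$ when the column $\boxplus\times[w]$ contains at least $s=r-c$ initial infections inside a fixed window of $e_3$-length~$c$. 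Then $\p_p[\boxplus\in\pi(A)]\le C(wp)^s$, the events for distinct boxes are independent, and the structural claim is that whenever $H\times[w]$ is covered one has $H\subset[\pi(A)]_b$ under coarse $\mathcal{N}_m^{a,b}$-bootstrap percolation; Theorem~\ref{expdecay} applied with $\varepsilon = C(wp)^s$ then yields the bound.

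Next, I would carry out the union bound using Lemma~\ref{nofbeams}. In the case $|H|\ge h$,
\[
\p_p\bigl[\exists\text{ such beam}\bigr] \;\le\; L^4(3e)^{\lambda h}\bigl(C\lambda kp\bigr)^{\Omega(sh)} \;\le\; \exp\bigl(\gamma p^{-s} + O(h) - \Omega(s\eta h\log(1/p))\bigr),
\]
since $\lambda kp = O(p^\eta)$ and $(3e)^{\lambda h} = e^{O(h)}$. Substituting $h=\alpha p^{-s}/\log(1/p)$ makes the negative term $-\Omega(s\alpha\eta p^{-s})$, which dominates the other two for $\gamma < s\alpha\eta/2$, so this probability tends to $0$ as $p\to0$. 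The complementary case $w\ge k$ with $|H|<h$ would be handled by an ancillary argument for ``thin'' beams: the internal filling of a beam of $e_3$-length $w\ge k$ with $|H|$ small requires a chain of $\Omega(w/c)$ bridging events along $e_3$, each of cost $O(p^s)$, yielding a per-beam bound of order $\exp(-\Omega(sk\log(1/p))) = \exp(-\Omega(sp^{-s+\eta}\log(1/p)))$, which after summing over positions easily dominates $L^4$.

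\textbf{Main obstacle.} The hardest step will be the per-beam bound, and in particular the structural claim that $H\subset[\pi(A)]_b$ under $\mathcal{N}_m^{a,b}$ whenever $H\times[w]$ is covered. This requires unwinding Definition~\ref{genbeam} inductively and showing that each merge of two covered sub-beams via the $\mathcal{N}_r^{a,b,c}$-closure corresponds to an $\mathcal{N}_m^{a,b}$-closure step for the 2D footprints on the coarse lattice. The role of the hypothesis $c=a+b$ is the algebraic identity $m=c+1$, which simultaneously makes $\mathcal{N}_m^{a,b}$ subcritical on $\Zdd$ (so Theorem~\ref{expdecay} applies) and captures the counting behind the coupling: a fully infected slab of $e_3$-height $c$ contributes exactly $c$ of the $r=c+s$ neighbours required by the $\mathcal{N}_r^{a,b,c}$-rule, so the remaining $s$ neighbours must come from the $\pm e_1,\pm e_2$-directions, and it is precisely this within-slab $s$-requirement that is encoded in the projection rule defining $\pi(A)$.
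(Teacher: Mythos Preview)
Your overall strategy---run Aizenman--Lebowitz on the coarse beams process, couple the 2D footprint with coarse $\mathcal N_{a+b+1}^{a,b}$-bootstrap, and invoke Theorem~\ref{expdecay}---is exactly the paper's. The gap is in the numerical choice of scales, and it breaks the ``long thin beam'' case $w\ge k$, $|H|<h$.

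You claim a per-beam bound $\exp\bigl(-\Omega(sk\log\tfrac1p)\bigr)$ there and assert it ``easily dominates $L^4$''. But with $k=\beta p^{-s+\eta}$ one has
\[
sk\log\tfrac1p \;=\; s\beta\, p^{-s}\cdot p^{\eta}\log\tfrac1p \;=\; o\bigl(p^{-s}\bigr),
\]
since $p^{\eta}\log(1/p)\to 0$; so the bound is $\exp(-o(p^{-s}))$ and cannot absorb $L^4=\exp(4\gamma p^{-s})$ for \emph{any} fixed $\gamma>0$. Moreover, your per-slab ``bridging cost'' $O(p^s)$ is too optimistic: a slab $H\times[O(1)]$ offers $O(|H|)$ positions for an $s$-cluster, so the slab probability is $O(|H|p^s)$, which for $|H|$ near $h=\alpha p^{-s}/\log(1/p)$ is only $O(1/\log(1/p))$---even larger. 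Either way this branch fails.

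The paper's fix is to take a \emph{single} scale $h=k=\varepsilon p^{-s}/\lambda$ with $\varepsilon>0$ a fixed small constant. Then in the $w\ge k$ branch, $p^s|H|\le\varepsilon$ makes the slab cost $1-e^{-\Omega(\varepsilon)}$ a constant strictly below $1$, and raising to the power $\Omega(k)=\Omega(\varepsilon p^{-s})$ gives $\exp(-c_1 p^{-s})$. In the $|H|\ge h$ branch, the coarse infection density is $O(wp^s)\le O(\varepsilon)$, still a small constant, so Theorem~\ref{expdecay} gives $\exp\bigl(-\Omega(\varepsilon\log(1/\varepsilon)p^{-s})\bigr)$. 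Both beat $L^4(3e)^{\lambda h}=\exp\bigl((4\gamma+O(\varepsilon))p^{-s}\bigr)$ once $\gamma\ll\varepsilon\log(1/\varepsilon)$. The moral is that neither $h$ nor $k$ can be taken $o(p^{-s})$: shrinking $k$ to push the coarse density to $0$ with $p$ (your $k=\beta p^{-s+\eta}$) strengthens the $|H|\ge h$ branch but destroys the $w\ge k$ branch, and conversely your smaller $h$ would wreck the $|H|\ge h$ branch if $k$ were restored to order $p^{-s}$.
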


\begin{proof}
Set $s=r-c$ and take $L<\exp(\gamma p^{-s})$, where $\gamma>0$ is some small constant. Let us show that
$\p_p(I^{\bullet}([L]^3))$ goes to $0$, as $p\to 0$. Fix $\varepsilon>0$.

If $[L]^3$ is internally filled, by Lemma \ref{ALlema2} with 
\[h=k=\varepsilon/\lambda p^s,\]
there exists a covered (coarse) beam $S=H\times[w]\subset [L]^3$
satisfying $w,|H|\le\varepsilon/p^s$, and moreover, either $w\ge k$ or $|H|\ge h$, hence, by union bound, $\p_p[I^{\bullet}([L]^3)]$ is at most
\begin{align*}
\sum_{S\in\mathcal B_{\lambda h,\lambda k}}(\p_p[I^{\text{\ding{86}}}(S)\cap
\{w\ge k\}]+\p_p[I^{\text{\ding{86}}}(S)\cap \{|H|\ge  h\}]).
\end{align*}

To bound the first term, we use the fact that $H\times[w]$ is covered; this implies that every copy (inside $S$) of the slab $H\times [rs]$ must contain $s$ vertices in $A$ within constant distance.
Therefore, by considering the $w/rs$ disjoint slabs  (and applying the FKG inequality to the $O(|H|)$ distinct subsets $Z$ of $s$ vertices within constant distance, $Z\subset H\times [rs]$), if $\varepsilon$ is small, then there exists some $c_1=c_1(\varepsilon,r)>0$ such that

\begin{align*}
\p_p[I^{\text{\ding{86}}}(H\times[w])\cap
\{w\ge k\}]  \le
\left(1-e^{-\Omega(p^s|H|)}\right)^{k/rs} 
 =\left(1-e^{-\Omega(\varepsilon)}\right)^{k/rs}
 \le e^{-c_1/p^s}.
\end{align*}


To bound the second term we use the fact that
if $[L]^3$ is internally filled, then every copy of
$\boxplus\times [L]$
(as in Definition \ref{coarse}) should contain at least \[t:=r-(a+b)\]
vertices $v_0,v_1,\dots,v_{t-1}$ with $\|v_i-v_l\|=O(1)$ for all $i,l< t$ such that, some $l<t$ satisfies $v_0,v_1,\dots, v_l\in A$ and $\forall i>l$, $v_i$  got infected by using $r-i$
infected neighbours in $v_i+N_{a,b}\times \{0\}$, where
$N_{a,b}$ is given by \eqref{neigh3} (so, $v_i\in [\{v_0,\dots, v_{i-1} \} \cup ([L]^3\setminus (\boxplus\times [L]))]$, otherwise, there is no way to fully infect such a copy).\\
Moreover, by our choice of $t$, $\mathcal N_{r-i}^{a,b}$ is subcritical for all $i$.
Therefore, by monotonicity we can couple the process on $[L]^2\times[w]$ (wlog we are assuming that $S\subset [L]^2\times[w]$) having initial infected set
$A$, with coarse $\mathcal N_{m}^{a,b}$-bootstrap
percolation ($m=a+b+1$) on $[L/(b+1)]^2\times\{0\}\subset\Zdd$ and initial infected set
\[A':=\left\{\boxplus\in[L]^2: |A\cap (\boxplus\times [w])|_{O(1)}\ge t
\right\},\]
where the subindex $O(1)$ in the cardinality symbol means that the vertices participating in the intersection are within constant distance. \\
Now, by applying Markov's inequality and the fact that $c=a+b$ implies $t=s$,
\[\p_p(|A\cap(\boxplus\times [w])|_{O(1)}\ge t) = 
O(wp^t)\le O(\varepsilon).\]
In particular, under (coarse) $\mathcal N_{m}^{a,b}$-bootstrap percolation with initial infected set $\varepsilon$-random,  there should exist a connected component of size at least
$|H|\ge h$ inside $[L]^2$. On the other hand, there are at most
$L^2$ possible ways to locate the origin in $H$, so if $\mathcal K$ denotes the (coarse) cluster of 0, Theorem \ref{expdecay} implies

\begin{align*}
\p_p[I_b^{\text{\ding{86}}}(S)\cap \{|H|\ge h\}]
& \le \sum_{\boxplus\subset [L]^2}\p_\varepsilon(\{|\mathcal K|\ge h\}\cap \{\boxplus=0\}) 
 \le L^{2}\p_\varepsilon(|\mathcal K|\ge h)\\
& \le  e^{2\gamma/p^s}e^{-\Omega(C\varepsilon/ p^s)} 
 = e^{-\Omega(p^{-s})},  
\end{align*}
where $C=-\log\varepsilon$ and, we choose small $\varepsilon>0$ such that $C\varepsilon>0$ and $\gamma\ll C\varepsilon$ at first.
By Lemma \ref{nofbeams} we conclude that

\begin{align*}
\p_p[I^{\bullet}([L]^3)] & \le \sum_{S\in\mathcal B_{\frac{\varepsilon}{p^s},\frac{\varepsilon}{p^s}}}\{ e^{-c_1/p^s}+e^{-\Omega(p^{-s})}  \} 
 \le L^4(3e)^{\varepsilon p^{-s}}e^{-\Omega(p^{-s})}
\to 0,
\end{align*}
for $\gamma>0$ small enough, since $L<e^{\gamma p^{-s}}$.
\end{proof}

\subsubsection{The case $c>a+b$}
  In this section we prove the lower bound corresponding to our last case $c>a+b$. 
We will show the following.
\begin{prop}\label{lower4}
Under $\mathcal N_{r}^{a,b,c}$-bootstrap percolation with $r\in\{c+2,\dots,b+c\}$ and $c>a+b$, there is a constant $\gamma=\gamma(c)>0$
such that, if 
\[L<\exp(\gamma p^{-(r-c)}(\log p)^2),\]
then 
$\p_p[I^{\bullet}([L]^3)]\to 0,\ as\ p\to 0.$

\end{prop}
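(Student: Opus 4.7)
We mimic the proof of Proposition \ref{lower3}, applying Lemma \ref{ALlema2} with \emph{asymmetric} scales that exploit the surplus $t-s=c-(a+b)\ge 1$.

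Set $s:=r-c$ and $t:=r-(a+b)$. Fix positive constants $c_0$ and $q$ with $\tilde C\lambda c_0\le q<t-s$, where $\tilde C>0$ is the implicit constant in the slab estimate of Proposition \ref{lower3}, and set
\[
h := c_0\, p^{-s}\log(1/p), \qquad k := p^{-s-q}(\log p)^2.
\]
Take $L<\exp(\gamma p^{-s}(\log p)^2)$ with $\gamma$ small. By Lemma \ref{ALlema2}, on the event $I^\bullet([L]^3)$ there is a covered coarse beam $S=H\times[w]\subset [L]^3$ with $|H|\le\lambda h$, $w\le\lambda k$, and either $|H|\ge h$ or $w\ge k$; by Lemma \ref{nofbeams} the number of candidates is $L^4(3e)^{\lambda h}=\exp(O(\gamma p^{-s}(\log p)^2))$, since $h=o(p^{-s}(\log p)^2)$.

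For the event $\{w\ge k\}$, the slab bound of Proposition \ref{lower3} gives
\[
\p\bigl[I^{\text{\ding{86}}}(S)\cap\{w\ge k\}\bigr]\le (1-e^{-\Omega(p^s|H|)})^{k/rs}.
\]
With $p^s|H|\le\lambda c_0\log(1/p)$ one has $e^{-\Omega(p^s|H|)}\ge p^{\tilde C\lambda c_0}$, so the right side is at most $\exp(-p^{\tilde C\lambda c_0}k/rs)=\exp(-\Omega(p^{\tilde C\lambda c_0-s-q}(\log p)^2))$; the inequality $\tilde C\lambda c_0\le q$ then turns this into $\exp(-\Omega(p^{-s}(\log p)^2))$. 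For the event $\{|H|\ge h\}$, couple with coarse $\mathcal N_{a+b+1}^{a,b}$-bootstrap exactly as in Proposition \ref{lower3}: the coarse seed density is $p'=O(wp^t)\le O(p^{t-s-q}(\log p)^2)\to 0$, with $\log(1/p')=\Omega(\log(1/p))$ since $q<t-s$; Theorem \ref{expdecay} gives $\p_{p'}(|\mathcal K|\ge h)\le\exp(-\Omega(c_0 p^{-s}(\log p)^2))$, and after summing over the $L^2$ possible origins of the cluster, $\exp(-\Omega(p^{-s}(\log p)^2))$ provided $\gamma$ is small. Combining both terms with the counting bound concludes $\p_p(I^\bullet([L]^3))\to 0$.

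\textbf{Main obstacle.} The delicate point is arranging the three parameters $c_0$, $q$, $\gamma$ simultaneously so that $\tilde C\lambda c_0\le q<t-s$ and $\gamma$ stays small compared to $c_0(t-s-q)$. This system has a solution iff $c_0<(t-s)/(\tilde C\lambda)$, a window that is non-empty precisely thanks to the hypothesis $c>a+b$; in the boundary regime $c=a+b$ this window closes ($t=s$ forces $q=0$), consistent with only the weaker $\exp(\gamma p^{-s})$ threshold established in Proposition \ref{lower3}.
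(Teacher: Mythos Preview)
Your proof is correct and follows essentially the same approach as the paper's: apply Lemma~\ref{ALlema2} with asymmetric scales $h\asymp p^{-s}\log(1/p)$ and $k$ polynomially larger than $p^{-s}$, bound the $\{w\ge k\}$ case via the slab argument and the $\{|H|\ge h\}$ case via the coarse subcritical coupling and Theorem~\ref{expdecay}. The only cosmetic differences are that the paper fixes the surplus exponent at $1/2$ (taking $k=p^{-s-1/2}/\lambda$) rather than a generic $q<t-s$, and omits the extra $(\log p)^2$ factor you attach to $k$; neither choice affects the argument.
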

\begin{proof}
Set $s=r-c$ and take $L<\exp(\gamma p^{-s}(\log p)^2)$, where $\gamma>0$ is some small constant. Let us show that
$\p_p(I^{\bullet}([L]^3))$ goes to $0$, as $p\to 0$. Fix $\delta >0$ and set
\[h=(\delta p^{-s}\log \tfrac{1}{p})/\lambda, \ \ \ \ \ k=p^{-s-1/2}/\lambda,\]

If $[L]^3$ is internally filled, by Lemma \ref{ALlema2} with 
there exists a covered (coarse) beam $S=H\times[w]\subset [L]^3$
satisfying $w\le \lambda k,\ |H|\le\lambda  h$, and moreover, either $w\ge k$ or $|H|\ge h$, hence, 
\begin{align*}
\p_p[I^{\bullet}([L]^3)]\le 
\sum_{S\in\mathcal B_{\lambda h,\lambda k}}(\p_p[I^{\text{\ding{86}}}(S)\cap
\{w\ge k\}]+\p_p[I^{\text{\ding{86}}}(S)\cap \{|H|\ge  h\}]).
\end{align*}

To bound the first term, we use the fact that $H\times[w]$ is covered; this implies that every copy (inside $S$) of the slab $H\times [rs]$ must contain $s$ vertices in $A$ within constant distance.
Therefore,  if $\delta$ is small enough, then 
\begin{align*}
\p_p[I^{\text{\ding{86}}}(H\times[w])\cap
\{w\ge k\}]  \le
\left(1-e^{-\Omega(p^s|H|)}\right)^{k/rs} 
 \le \left(1-p^{O(\delta)}\right)^{k/rs}
 \le e^{-\Omega(p^{-s}(\log p)^2)}.
\end{align*}


To bound the second term we use the fact that
if $[L]^3$ is internally filled, then every copy of
$\boxplus\times [L]$
(as in Definition \ref{coarse}) should contain at least \[t:=r-(a+b)\]
vertices $v_0,v_1,\dots,v_{t-1}$ with $\|v_i-v_l\|=O(1)$ for all $i,l< t$ such that, some $l<t$ satisfies $v_0,v_1,\dots, v_l\in A$ and $\forall i>l$, $v_i$  got infected by using $r-i$
infected neighbours in $v_i+N_{a,b}\times \{0\}$.
Moreover, $\mathcal N_{r-i}^{a,b}$ is subcritical for all $i$.
Therefore, by monotonicity we can couple the process on $[L]^2\times[w]$ having initial infected set
$A$, with coarse $\mathcal N_{m}^{a,b}$-bootstrap
percolation on $[L/(b+1)]^2\times\{0\}\subset\Zdd$ and initial infected set
\[A':=\left\{\boxplus\in[L]^2: |A\cap (\boxplus\times [w])|_{O(1)}\ge t
\right\},\]
where the subindex $O(1)$ in the cardinality symbol means that the vertices participating in the intersection are within constant distance. \\
Now, by applying Markov's inequality and the fact that $c>a+b$ implies $t\ge s+1$,
\[\p_p(|A\cap(\boxplus\times [w])|_{O(1)}\ge t) = 
O(wp^t)= O(p^{-s-1/2+t})\le O(p^{1/2}).\]
In particular, under (coarse) $\mathcal N_{m}^{a,b}$-bootstrap percolation with initial infected set $p^{1/2}$-random,  there should exist a connected component of size at least
$|H|\ge h$ inside $[L]^2$. 
So, if $\mathcal K$ denotes the (coarse) cluster of 0, Theorem \ref{expdecay} implies for $\gamma \ll \delta$,

\begin{align*}
\p_p[I_b^{\text{\ding{86}}}(S)\cap \{|H|\ge h\}]
 \le L^{2}\p_{p^{1/2}}(|\mathcal K|\ge h)
 \le  e^{2\gamma p^{-s}(\log p)^2}e^{-\Omega(C  h)} 
 = e^{-\Omega(p^{-s}(\log\frac 1p)^2)},  
\end{align*}
where $C=-\log(p^{1/2})=\Theta(\log\frac 1p)$.
By Lemma \ref{nofbeams} we conclude that

\begin{align*}
\p_p[I^{\bullet}([L]^3)] & \le \sum_{S\in\mathcal B_{\delta p^{-s}\log \frac{1}{p},\,  p^{-s-1/2}}}e^{-\Omega(p^{-s}(\log\frac 1p)^2)}
 \le L^4(3e)^{ p^{-s-1/2}}e^{-\Omega(p^{-s}(\log\frac 1p)^2)}
\to 0,
\end{align*}
for $\gamma>0$ small enough, and we are done.
\end{proof}


\section{Proof of Proposition \ref{twoout4}}\label{boundsalfa_s}
 Now, we proceed to prove all upper bounds of Proposition \ref{twoout4} in increasing order of difficulty. Most of the proofs are analogous versions of the cases $c<a+b$ in \eqref{myP1}, thus, we will sketch some of them and only point out what are the new ideas.

 Continuing with Definition \ref{seqpattern},
given $s\ge 3$ and $t=t_s$ given by \eqref{valueoft}, we also define
\begin{equation}
\alpha_s:=  \frac{
s+(s-1)+\cdots +(s-t)}{t+2}.
\end{equation}
\end{defi}
Note that inequality \eqref{valueoft} holds for $t=1$, since $s+(s-1)\le 3(s-1)$, thus, $t_s$ is well defined for all $s\ge 3$; moreover, it is easy to see that
\[t_s:=\max\{t\in[s-1]: \alpha_s< s-t\}=\left\lceil\frac{\sqrt{9+8s}-5}{2}\right\rceil.\]
Finally, we replace to obtain
$\alpha_s=
\frac{(t_s+1)s-(t_s+1)t_s/2}{t_s+2}
,$
which is the same as (\ref{alfas}). 

\begin{rema}
By the maximality of $t=t_s$, it follows that 
$s-t-1\le \alpha_s< s-t.$
Thus, $\alpha_s$ is integer if and only if $\alpha_s=s-t-1$, which occurs if and only if
\begin{equation}\label{alfa_sinteger}
    s=\frac{(t+1)(t+4)}{2}.
\end{equation}
\end{rema}

\subsection{Cases in Proposition \ref{twoout4} (i)}

Since the cases $c\ge a+b$ are covered by Theorem \ref{extendBeam}, from now on we assume $c<a+b$,
and set

\begin{equation}
    s:=r-c\in\{3,\dots, a\}.
\end{equation}

\subsubsection*{Case $c=a+b-s+m$ and $m\in [s-1]$}
In this section we consider the families \[\mathcal N_{a+b+m}^{a,b,a+b-s+m},\]
corresponding to the case  $r=c+s=a+b+m$ with $m\in[s-1]$. We are only considering the cases $r\le a+c$, thus, we assume $a\ge s.$ Set
\[M:=\max\{\alpha_s,m\}.\]
We will show the following.
\begin{prop}\label{upperweird>2}
	Consider $\mathcal N_{a+b+m}^{a,b,a+b-s+m}$-bootstrap percolation. There exists a constant $\Gamma=\Gamma(b)>0$ 
	such that, if 
	\begin{equation}
	L=\exp\left(\Gamma p^{-M}(\log \tfrac 1p)^{2}\right),
\end{equation}
then $ \p_p\left(I^\bullet([L]^3)\right)\to 1,\ as\ p\to 0.$
\end{prop}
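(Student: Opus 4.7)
The plan is to adapt the critical-droplet strategy of Lemma \ref{lemagrow0}--\ref{lemagrow} to the regime $c=a+b-s+m$ with $m\in[s-1]$. Since $r=a+b+m$ and $s\le a\le b\le c$, all three induced two-dimensional processes $\mathcal N_{s}^{a,b}$, $\mathcal N_{a+m}^{a,c}$ and $\mathcal N_{b+m}^{b,c}$ are supercritical, so we are once again in the supercritical $e_1$-regime and the basic growth estimates of Lemma \ref{regimer>a+b} together with Corollary \ref{corosp} are available.

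I would split the argument according to whether $M=m$ or $M=\alpha_s$. When $M=m$ the bottleneck is the cost $e^{-\Omega(p^m w)}$ from Lemma \ref{regimer>a+b} of growing along $e_2$ (or $e_1$), while when $M=\alpha_s$ the bottleneck is the supercritical $e_3$-growth of Corollary \ref{corosp}, and the $e_3$-coordinate of the droplet should sit just below the $s$-pattern threshold $p^{-(t+1)}$ with $t=t_s$ chosen so that $\alpha_s<s-t$. In either case I would take a critical droplet of the form $R=[l]\times[h]\times[w]$ with $l\ge c$ small, $h$ of order $p^{-(t+1)}$ times a polylog factor, and $w$ of order $p^{-M}(\log\tfrac1p)^2$ (times lower-order factors), and verify---exactly as in the proof of Lemma \ref{lemagrow}---that conditionally on $I^\bullet(R)$ the block grows to $[L]^3$ with probability $1-o(1)$ by chaining Lemma \ref{regimer>a+b} and Corollary \ref{corosp} across increasing scales.

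The core of the argument is showing $\p_p(I^\bullet(R))\ge\exp(-Cp^{-M}(\log\tfrac1p)^2)$. I would build $R$ through a nested chain $R_1\subset R_1'\subset R_1''\subset R_2\subset\cdots\subset R_n=R$, each enlargement being a unit step in one of the three axis directions, and use
\[
\p_p(I^\bullet(R))\ge p^{|R_1|}\prod_{k}\p_p\bigl(I^\bullet(R_{k+1})\bigm|I^\bullet(R_k)\bigr),
\]
bounding each conditional factor by Corollary \ref{corosp} when the enlargement is in $e_3$ and by Lemma \ref{regimer>a+b} when it is in $e_1$ or $e_2$. The $e_3$-phase would be partitioned into dyadic bands $p^{-m'}<h\le p^{-(m'+1)}$ for $m'=0,1,\dots,s-1$, as in the proof of Proposition \ref{upper=} $(r\le a+c)$; converting the resulting products into integrals $\int z^{\alpha}[-\log(1-e^{-z^{\beta}})]\,dz$, each band contributes an exponent of order $p^{-\beta_{m'}}$ for some $\beta_{m'}$ computable from Corollary \ref{corosp}, and by optimising over $m'$ one recovers $\alpha_s=\frac{s+(s-1)+\cdots+(s-t)}{t+2}$ as the minimal such exponent.

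The main obstacle will be calibrating the intermediate dimensions so that all the bands and all three directions produce contributions of order at most $p^{-M}(\log\tfrac1p)^2$, and in particular so that the extra $(\log\tfrac1p)^2$ factor is correctly forced by (i) the polylog height $h\simeq p^{-(t+1)}(\log\tfrac1p)^{1/(t+2)}$ needed for the sub-critical bands in Corollary \ref{corosp} to become harmless when $M=\alpha_s$, and (ii) the $e_2$-direction $\log$-range needed to accommodate $w$ growing like $p^{-m}(\log\tfrac1p)^2$ when $M=m$. Once $\p_p(I^\bullet(R))\ge\exp(-Cp^{-M}(\log\tfrac1p)^2)$ is established, the standard disjoint-copies argument on $[L]^3$ with $L=\exp(\Gamma p^{-M}(\log\tfrac1p)^2)$ and $\Gamma\gg C$ yields $\p_p(I^\bullet([L]^3))\to 1$.
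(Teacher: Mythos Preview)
Your plan imports the three-direction integration scheme from the proof of Proposition~\ref{upper=} (the $c=a+b$ case), but here the paper does something much simpler and avoids any integration in the droplet-creation step. The point is that now $r-(a+b)=m<s$, so the dominant factor $(1-e^{-\Omega(p^mw)})^{2h}$ in \eqref{weirder} is already harmless once $w$ has order $p^{-m}\log\tfrac1p$. The paper therefore sets $w=\Theta(p^{-m}\log\tfrac1p)$ and pays for the \emph{entire} slab $R_c=[c]^2\times[w]\subset A$ at cost $p^{c^2w}=\exp\bigl(-O(p^{-m}(\log\tfrac1p)^2)\bigr)$, which is within budget since $m\le M$. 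From there it grows only the square face $[k]^2\to[k+1]^2$ up to $K=C_sp^{-\alpha_s}(\log\tfrac1p)^{1/(t+2)}$ using \eqref{weirder} alone; the product is simply $p^{O(K)}(1-p^{2\alpha_s-M})^{O(K^2)}\ge\exp\bigl(-C'p^{-M}(\log\tfrac1p)^2\bigr)$. No $e_3$-growth, no dyadic bands, no integrals. The exponent $\alpha_s$ enters not through any optimisation of the creation cost, as you suggest, but only through the choice of $K$: in the accompanying growth lemma, Corollary~\ref{1corols>2}(a) then makes each subsequent $e_3$-step succeed with probability at least $1-p^{C_s}$.

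Your droplet dimensions are also off. The face should be a square of side $\approx p^{-\alpha_s}$ (recall $s-t-1\le\alpha_s<s-t$), not a thin $[c]\times[p^{-(t+1)}]$, and $w$ should be the \emph{short} side ($\approx p^{-m}\log\tfrac1p$), not $\approx p^{-M}(\log\tfrac1p)^2$. With $w$ that long, any seed containing a full $w$-column already costs $\exp\bigl(-\Omega(p^{-M}(\log\tfrac1p)^3)\bigr)$, which blows the budget; and if instead you start from an $O(1)$ seed, your outline does not explain how to escape the bootstrap that growing $w$ via Corollary~\ref{corosp} requires the face to be large while growing the face via Lemma~\ref{regimer>a+b} requires $w\gg p^{-m}$. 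The paper's ``pay for a thin slab up front'' trick sidesteps this entirely.
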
 
By Lemma \ref{regimer>a+b} we have
\begin{equation}\label{weirder}
  \p_p(I^\bullet([h+1]^2\times[w])|I^\bullet([h]^2\times[w]))\ge
	\left(1-e^{-\Omega(p^{b+m}w)}\right)^{c}
	\left(1-e^{-\Omega(p^mw)}\right)^{2h}  
\end{equation}

Now, we show our candidate for critical droplet.
\begin{lemma}
Set $L=\exp\left(\Gamma p^{-M}(\log \frac 1p)^{2}\right)$, $h=C_sp^{-\alpha_s}(\log \frac 1p)^{1/(t+2)}$ for some large constant $C_s$, and
\[R_1:=[h]^2\times 
\left[C_s'p^{-m}\log \tfrac{1}{p}\right].\]
Conditionally on $I^\bullet(R_1)$,
the probability that $[L]^3$ is internally filled goes to $1$ as $p\to 0$.
\end{lemma}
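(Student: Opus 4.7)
The plan is a three-stage growth from $R_1$ to $[L]^3$, alternating between extending the depth via Corollary \ref{corosp} and extending the cross-section via Lemma \ref{regimer>a+b} (the content of \eqref{weirder}). The choice of $R_1$ is tailored so that the first stage is the delicate one and the later stages are essentially free.

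First I would hold the cross-section fixed at $[h]^2$ and push the depth from $w_0 = C_s' p^{-m}\log(1/p)$ up to $w_1 := p^{-(b+m+1)}$ by repeated application of Corollary \ref{corosp}. Since $s-t-1 \leq \alpha_s < s-t$ (the maximality in \eqref{valueoft}), for $p$ small the cross-section sits in the regime $p^{-(s-t-1)} < h \leq p^{-(s-t)}$, so the corollary applies with parameter $m'=s-t-1$, and one added $e_3$-layer succeeds with probability at least $1-\exp\!\left(-\Omega(h^{t+2}p^{(t+1)(s-t/2)})\right)$. The identity $\alpha_s(t+2) = (t+1)(s-t/2)$ which comes straight from \eqref{alfas}, combined with the definition $h=C_sp^{-\alpha_s}(\log\tfrac{1}{p})^{1/(t+2)}$, yields $h^{t+2}p^{(t+1)(s-t/2)} = C_s^{t+2}\log(1/p)$, so each added layer fails with probability at most $p^{\Omega(C_s^{t+2})}$; a union bound over the at most $p^{-(b+m+1)}$ layers gives total failure $o(1)$ for $C_s$ large.

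Second, now that $w\geq w_1 = p^{-(b+m+1)}$, both factors in \eqref{weirder} are essentially $1$: the bottleneck factor satisfies $(1-e^{-\Omega(p^{b+m}w)})^c \geq (1-e^{-\Omega(p^{-1})})^c$, and the secondary factor satisfies $(1-e^{-\Omega(p^m w)})^{2h} \geq (1-e^{-\Omega(p^{-(b+1)})})^{2h}$, both $1-o(1)$ since $h$ is only polynomial in $1/p$. Iterating to grow the cross-section from $[h]^2$ to $[H_0]^2 := [p^{-K}]^2$ (for $K$ any fixed constant) has total failure probability $O(H_0^2)\, e^{-\Omega(p^{-1})} = o(1)$.

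Third, once the cross-section is polynomial the growth becomes very cheap: Corollary \ref{corosp} applied with cross-section $[H_0]^2$ gives $e_3$-growth at cost $e^{-\Omega(p^{-\mathrm{poly}})}$ per layer, and \eqref{weirder} remains equally favorable in $e_1, e_2$, so a constant number of alternating rounds suffice to reach $[L]^3$ with $\log L = \Gamma p^{-M}(\log 1/p)^2$, provided $\Gamma$ absorbs the implicit constants. The main obstacle is the Phase 1 bookkeeping: the logarithmic correction $(\log 1/p)^{1/(t+2)}$ in the definition of $h$ is precisely tuned so that the exponent $h^{t+2}p^{(t+1)(s-t/2)}$, which would otherwise be a constant, becomes $\Theta(\log(1/p))$, converting the per-layer failure into a polynomial $p^{\Omega(C_s^{t+2})}$ that is summable over the depth range. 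This is exactly where the combinatorial optimum encoded by $\alpha_s$ and $t_s$ in \eqref{alfas} is used.
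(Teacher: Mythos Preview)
Your argument is correct and follows essentially the same three–stage route as the paper: grow the depth to $p^{-(b+m+1)}$ using the supercritical induced $e_3$-process, then enlarge the cross-section via \eqref{weirder}, then finish off. The paper compresses this into the chain $R_1\subset R_2\subset R_3\subset R_4\subset [L]^3$ with $R_2=[h]^2\times[p^{-(\bar r+1)}]$, $R_3=[p^{-2s}]^2\times[p^{-(\bar r+1)}]$, $R_4=[p^{-2s}]^2\times[L]$, and cites Corollary~\ref{1corols>2}(a) (the two-dimensional version) rather than Corollary~\ref{corosp}; since both rest on Lemma~\ref{MainLemmafor>2} and only use $s\le a\le b$, the difference is purely cosmetic. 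Two small remarks: the count of steps in your Phase~2 is $O(H_0)$ rather than $O(H_0^2)$ (harmless), and the phrase ``provided $\Gamma$ absorbs the implicit constants'' is off --- $\Gamma$ is fixed in the statement, and what you actually need is that your polynomial $H_0=p^{-K}$ be chosen with $K$ large relative to $M$ so that $L\cdot e^{-\Omega(p^{-K})}=o(1)$, which holds e.g.\ for $K=2s$.
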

\begin{proof}
	Consider the rectangles $R_2\subset R_3\subset R_4\subset R_5:= [L]^3$ containing $R_1$, defined by
	$R_2:=[h]^2\times [p^{-(\overline{r}+1)}]$, $R_3:=[p^{-2s}]^2\times [p^{-(\overline{r}+1)}]$, 
	with $\overline{r}:=b+m$, and $R_4:=[p^{-2s}]^2\times [L]$. 
	The rest of the proof is as always;
	we apply Corollary \ref{1corols>2} to deduce that
	
		\begin{align*}
	\p_p(I^\bullet(R_{2})|I^\bullet(R_1)) & \ge \left(1-e^{-\Omega(p^{\alpha_s(t+2)}h^{t+2})}\right)^{p^{-(\overline{r}+1)}} 
	= 
	\left(1-p^{C_s}\right)^{p^{-(\overline{r}+1)}}\to 1,
	\end{align*}
	if $C_s$ is large. The rest of the proof is straightforward by using (\ref{weirder}).
	\end{proof}

Now, we prove the upper bound for the critical length.  Note that $M<2\alpha_s$.
\begin{proof}[Proof of Proposition \ref{upperweird>2}]
Set $L=\exp\left(\Gamma p^{-M}(\log \tfrac 1p)^{2}\right)$, 
where $\Gamma$ is a constant to be chosen. Set $w:=m(2\alpha_s-M)p^{-m}\log \tfrac{1}{p}$ and consider the rectangle
	\[R:=\left[C_sp^{-\alpha_s}(\log \tfrac 1p)^{1/(t+2)}\right]^2\times \left[w\right]\subset[L]^3.\]
	We need to show that there exists $C'>0$ satisfying
	\begin{equation}\label{spcdweird>2}
	\p_p(I^\bullet(R))\ge \exp\left(-C' p^{-M}(\log \tfrac 1p)^{2}\right).
	\end{equation}
In fact, start with $R_c:=[c]^2\times \left[
w\right]
\subset A$, and then grow from $R_k=[k]^2\times \left[w 
\right]$ to $R_{k+1}$, for 
\[k=c,\dots, K:=C_sp^{-\alpha_s}(\log \tfrac 1p)^{1/(t+2)}\]
to obtain

\begin{align*}
   \p_p\left(I^\bullet(R)\right) & \ge
   \p_p(R_c\subset A)\prod_{k=c}^K\p_p\left(I^\bullet(R_{k+1})|I^\bullet(R_k)\right)\\
   &\ge p^{|R_c|}\prod_{k=c}^K
		\left(1-e^{-\Omega(p^{b+m}w)}\right)^{c}
	\left(1-e^{-\Omega(p^mw)}\right)^{2h}   
	\ge p^{|R_c|}p^{C_1'K}\left(1-p^{2\alpha_s-M}\right)^{C_1''K^2} \\
	&\ge e^{-\Omega(w + K)\log\frac 1p} \exp\left(-\Omega(p^{2\alpha_s-M} p^{-2\alpha_s}(\log \tfrac 1p)^{2/(t+2)})\right)
	 \ge \exp\left(-C' p^{-M}(\log \tfrac 1p)^{2}\right),
\end{align*}
with all $C's$ being positive constants depending on $s$ and $c$.
\end{proof}

\subsection{Cases in Proposition \ref{twoout4} \textup{(ii)}}
These cases correspond to the regime
\[r\le a+b.\]
This time, for a given rectangle $R$, growing along the $e_1$ and $e_2$ (harder) directions is easier, because it is enough to find a copy of some `pattern' with constant size in all faces of $R$, no matter its size.

\begin{lemma}[Regime $r\le a+b$]\label{regimer<a+b}
	Fix $h,w\ge c$. If $p$ is small enough, then
\begin{equation}\label{growmidIneq}
    \p_p\left(I^\bullet([h+1]^2\times[w])|I^\bullet([h]^2\times[w])\right)\ge
	\left(1-e^{-\Omega\big(p^{{c\choose 2}}wh\big)}\right)^{2}
\end{equation}
\end{lemma}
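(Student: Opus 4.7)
The plan is to decompose the ``L-shaped'' annulus $[h+1]^2\times[w]\setminus[h]^2\times[w]$ into the two side faces
\[F_1:=\{h+1\}\times[h]\times[w],\qquad F_2:=[h]\times\{h+1\}\times[w],\]
and the corner edge $E:=\{h+1\}\times\{h+1\}\times[w]$, and to show that $F_1$ and $F_2$ each fill with probability at least $1-\exp(-\Omega(p^{\binom{c}{2}}hw))$ from disjoint parts of $A$, after which $E$ fills automatically. The squared factor in the statement will come from the independence of these two events.

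The key reduction is that, conditional on $I^\bullet([h]^2\times[w])$, every vertex $v\in F_1$ receives $a$ infected neighbours from the $-e_1$-direction inside the block, so the bootstrap dynamics restricted to $F_1$ is driven by the effective two-dimensional update family $\mathcal N_m^{b,c}$ on the slab $[h]\times[w]$ with threshold $m:=r-a\le b$; the analogous statement holds for $F_2$ by the $e_1\leftrightarrow e_2$ symmetry. Since $m\le b\le c$, the family $\mathcal N_m^{b,c}$ has empty stable set and is highly supercritical, so a constant-size seed ignites it. I would use the staircase pattern $S$ consisting, for $j=1,\ldots,m$, of $m-j+1$ consecutive $e_3$-seeds on the $j$-th $e_2$-line; then $|S|=\binom{m+1}{2}\le\binom{c}{2}$.

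Verifying the cascade on $F_1$ is inductive on $j$. Line $1$ fills because its $m$ consecutive $e_3$-seeds match the $1$D threshold $m$ along the $e_3$-axis, the missing count $r-m=a$ being supplied by the block-neighbours. Assuming lines $1,\ldots,j-1$ are already full, each vertex on line $j$ then collects $a$ block-neighbours, $j-1$ infected $-e_2$-neighbours from the previously cascaded lines at $e_2$-distances $1,\ldots,j-1\le b$, and $m-j+1$ consecutive $-e_3$-neighbours from the seeds on its own line, for a total of $a+(j-1)+(m-j+1)=r$, so line $j$ also fills; once lines $1,\ldots,m$ are full, every other line in $[h]$ has $\ge m$ cascaded $-e_2$-neighbours available and fills automatically. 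Tiling $F_1$ by $\Omega(hw)$ disjoint constant-size windows, each independently containing a translate of $S$ with probability $\ge p^{\binom{c}{2}}$, yields
\[\p_p\bigl(F_1\text{ fills}\bigm|I^\bullet([h]^2\times[w])\bigr)\ \ge\ 1-\bigl(1-p^{\binom{c}{2}}\bigr)^{\Omega(hw)}\ \ge\ 1-\exp\bigl(-\Omega(p^{\binom{c}{2}}hw)\bigr),\]
and the symmetric bound holds for $F_2$. Since these two events depend on the disjoint initial configurations $A\cap F_1$ and $A\cap F_2$, they are independent, producing the squared factor.

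Once $F_1$ and $F_2$ are full, any vertex $(h+1,h+1,k)\in E$ collects $a$ infected $-e_1$-neighbours in $F_2$ and $b$ infected $-e_2$-neighbours in $F_1$, for a total of $a+b\ge r$, so $E$ is infected automatically, completing the proof. The main technical obstacle is the cascade bookkeeping above, particularly in the isotropic edge case $b=c$, where the naive staircase has $\binom{c+1}{2}>\binom{c}{2}$ vertices; there one has to tighten the pattern (for example, by letting $A\cap E$ provide the last missing seed on line $m$ through the corner) or invoke an alternative cascade that uses the $e_3$-axis more aggressively, in order to stay within the $\binom{c}{2}$ seed budget.
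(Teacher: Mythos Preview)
Your approach is essentially the same as the paper's: both observe that, conditional on the block $[h]^2\times[w]$ being full, the induced processes on the two new faces $F_1,F_2$ are the supercritical two-dimensional families $\mathcal N_{r-a}^{b,c}$ and $\mathcal N_{r-b}^{a,c}$ (since $r\le a+b$), and that the corner column $E$ fills automatically once both faces are full. The paper simply cites Lemma~2.4 of \cite{DB20} for the remaining details; you instead exhibit an explicit staircase seed and verify the cascade, which is a perfectly reasonable way to fill in the argument.

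There is, however, a genuine gap that you flag but do not actually resolve. In the edge case $b=c$ and $r=a+b$, the induced process on $F_1$ is $\mathcal N_c^{c,c}$ and your staircase needs $\binom{c+1}{2}$ seeds, exceeding the $\binom{c}{2}$ budget. Your proposed fixes do not work: a seed in $A\cap E$ is only an $e_2$-neighbour of the top $b$ rows of $F_1$, so it cannot help a staircase placed in a generic window, and the tiling argument requires a translation-invariant seed. More to the point, the exponent $\binom{c}{2}$ is unattainable in this edge case by \emph{any} seed: already for $c=2$ a single vertex infects nothing under $\mathcal N_2^{2,2}$, while $\binom{2}{2}=1$. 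So the lemma as literally stated is slightly too strong at this boundary. This does not affect the paper, though: the bound \eqref{growmidIneq} is only invoked in the range $c\in\{b+1,\dots,a+b-s\}$ (Lemma~\ref{mimics>2} and Proposition~\ref{upper2s>2}), where $b<c$ strictly, and there your staircase has $\binom{r-a+1}{2}\le\binom{b+1}{2}\le\binom{c}{2}$ seeds as required. When $b=c$ the paper uses different growth estimates altogether.
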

\begin{proof}
See Lemma 2.4 in \cite{DB20}  (the induced 2-dimensional processes 
$\mathcal N_{r-a}^{b,c}$ and $\mathcal N_{r-b}^{a,c}$ are supercritical).
\end{proof}
We also know the cost of growing along the (easiest) $e_3$-direction.
\begin{coro}[Supercritical process]\label{1corols>2}
Consider $\mathcal N_s^{s,s}$-bootstrap percolation.
\begin{enumerate}
    \item[\textup{(a)}] Suppose that $p^{-(s-t-1)}\le k< \varepsilon 
    p^{-(s-t)}$ (in particular, if $p^{-\alpha_s}\le k< 
    p^{-\alpha_s-\delta}$ and $0<\delta<s-t-\alpha_s$), then 
\begin{equation}
\p_p(I^{\bullet}([k]^2)) \ge 
  1-\exp\left(-\Omega\left(p^{\alpha_s(t+2)}k^{t+2}\right)\right).
\end{equation}
    \item[\textup{(b)}] If $k\ge p^{-s}$, then
    \begin{equation}
\p_p(I^{\bullet}([k]^2)) \ge 
  1-\exp\left(-\Omega\left(k\right)\right).
\end{equation}
\end{enumerate}
\end{coro}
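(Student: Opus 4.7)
The plan is to derive Corollary \ref{1corols>2} as a direct specialization of Lemma \ref{MainLemmafor>2} applied to the square $[l]\times[k]$ with $l=k$, choosing the parameter $m$ appropriately in each of the two regimes.

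For part \textup{(a)}, I would take $m:=s-t-1$ (so $m\in\{0,1,\dots,s-1\}$ since $t\in[s-1]$), and use the second bound \eqref{intfillingprob''} of the lemma. The hypothesis $k\le (2/3)p^{-(m+1)}=(2/3)p^{-(s-t)}$ of \eqref{intfillingprob''} is exactly the upper bound $k<\varepsilon p^{-(s-t)}$ (for $\varepsilon \le 2/3$), and $k=\Omega(p^{-m})$ is implied by $k\ge p^{-(s-t-1)}$. Substituting $l=k$, one gets
\[
\p_p(I^{\bullet}([k]^2)) \ge 1-\exp\!\left(-\Omega\!\left(k^{\,s-m+1}\,p^{\sum_{i=m+1}^{s}i}\right)\right),
\]
so I just need to check that $s-m+1=t+2$ and $\sum_{i=m+1}^{s}i=\sum_{j=0}^{t}(s-j)=(t+1)(s-t/2)=(t+2)\alpha_s$ by the definition \eqref{alfas} of $\alpha_s$. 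This matches the claimed exponent $p^{\alpha_s(t+2)}k^{t+2}$ exactly.

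For the parenthetical claim in \textup{(a)}, I would use the inequality $s-t-1\le \alpha_s<s-t$ recorded in the remark following \eqref{alfa_sinteger}: it gives $p^{-(s-t-1)}\le p^{-\alpha_s}\le k$, while the hypothesis $k<p^{-\alpha_s-\delta}$ with $\delta<s-t-\alpha_s$ yields $k<p^{-\alpha_s-\delta}=o(p^{-(s-t)})$ as $p\to 0$, so $k<\varepsilon p^{-(s-t)}$ eventually, and the main hypothesis of \textup{(a)} is satisfied.

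For part \textup{(b)}, I would take $m:=s$, which is allowed since $m\in[s]\cup\{0\}$. With $m=s$ the product $\prod_{i=m+1}^{s}[\,\cdot\,]$ in \eqref{intfillingprob'} is empty (equal to $1$), and the hypothesis $k=\Omega(p^{-m})=\Omega(p^{-s})$ is given. Specializing to $l=k$, Lemma \ref{MainLemmafor>2} gives
\[
\p_p(I^{\bullet}([k]^2))\ge 1-(1-\delta)^{k/s}\ge 1-\exp\!\left(-\Omega(k)\right),
\]
as required. Both parts are essentially bookkeeping — the only point that requires any care is the identity $\sum_{i=m+1}^{s}i=(t+2)\alpha_s$ in part \textup{(a)}, which is where the definition \eqref{alfas} of $\alpha_s$ was tailored, so no real obstacle is expected.
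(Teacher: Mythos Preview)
Your proposal is correct and follows exactly the paper's approach: apply Lemma~\ref{MainLemmafor>2} with $l=k$, taking $m=s-t-1$ for part~(a) and $m=s$ for part~(b). The only extra content you add is the explicit verification that $\sum_{i=s-t}^{s}i=(t+2)\alpha_s$ and the justification of the parenthetical range via $s-t-1\le\alpha_s<s-t$, both of which the paper leaves implicit.
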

\begin{proof}
We use Lemma \ref{MainLemmafor>2} with $l=k$. For (a) take $m=s-t-1$   (recall that $s-t-1\le \alpha_s <s-t$). For (b) consider $m=s$.
\end{proof}

Now, we prove the three cases in (ii).

\subsubsection*{Case $c\in\{b+1,\dots, a+b-s\}$}
In this section we consider the families
\[\mathcal N_{c+s}^{a,b,c},\]
with $c\in I_s:=\{b+1,\dots, a+b-s\}$ (here $a>s$, otherwise this case does not exist).
We have to prove the following.
\begin{prop}\label{upper2s>2}
	Fix $c\in I_s$ and  consider $\mathcal N_{c+s}^{a,b,c}$-bootstrap percolation. There exists a constant $\Gamma=\Gamma(c)>0$
	such that, if 
	\begin{equation}
 L=\exp\left(\Gamma p^{-\alpha_s}(\log \tfrac 1p)^{(t+3)/(t+2)}\right),
\end{equation}
then $ \p_p\left(I^\bullet([L]^3)\right)\to 1,\ as\ p\to 0.$
\end{prop}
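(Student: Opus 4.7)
The plan is to mimic the structure of the proof of Proposition \ref{upperweird>2}, but replacing Lemma \ref{regimer>a+b} with Lemma \ref{regimer<a+b} whenever we grow in the $e_1,e_2$-directions (since here $r\le a+b$). Fix $s=r-c\ge 3$, set $t=t_s$, and take as critical droplet
\[R := [h]^2\times[w], \qquad h := C_s\,p^{-\alpha_s}(\log\tfrac{1}{p})^{1/(t+2)},\quad w := C_s,\]
for a large constant $C_s=C_s(c)>0$ to be chosen. There are three tasks: (a) show $\p_p(I^\bullet(R))\ge \exp\bigl(-C\,p^{-\alpha_s}(\log\tfrac{1}{p})^{(t+3)/(t+2)}\bigr)$ for some constant $C>0$; (b) show that conditional on $I^\bullet(R)$ the droplet grows to fill $[L]^3$ with high probability; (c) conclude by considering $\lfloor L^3/|R|\rfloor$ disjoint translates of $R$ inside $[L]^3$ and taking $\Gamma>C$ large.

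For (a), plant $[c]^2\times[w]\subseteq A$ at the negligible cost $p^{c^2 w}=p^{O(1)}$, and then grow in the $e_1,e_2$-directions from $R_k:=[k]^2\times[w]$ to $R_{k+1}$ for $k=c,\dots,h-1$ via Lemma \ref{regimer<a+b}:
\[\p_p\bigl(I^\bullet(R_{k+1})\,\big|\,I^\bullet(R_k)\bigr) \ge \bigl(1-e^{-\Omega(p^{\binom{c}{2}}wk)}\bigr)^{2}.\]
Since $c\ge b+1\ge s+2$ we have $\binom{c}{2}\ge (s+2)(s+1)/2>\alpha_s$, so $p^{\binom{c}{2}}wk$ is polynomially small for every $k\in[c,h]$. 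Using $1-e^{-x}\sim x$ as $x\to 0$ and summing, the log of the telescoping product is at least
\[-O(h\log\tfrac{1}{p}) = -O\bigl(p^{-\alpha_s}(\log\tfrac{1}{p})^{(t+3)/(t+2)}\bigr),\]
which establishes (a).

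For (b), once $R$ is internally filled, first grow in the $e_3$-direction via Corollary \ref{1corols>2}(a): because $p^{-(s-t-1)}\le h<\varepsilon p^{-(s-t)}$, each new layer succeeds with probability $1-\exp(-\Omega(p^{\alpha_s(t+2)}h^{t+2}))=1-p^{\Omega(C_s^{t+2})}$, so taking $C_s$ large the droplet reaches $[h]^2\times[w']$ with $w'=p^{-K'}$ any prescribed polynomial, with probability tending to $1$. Then Lemma \ref{regimer<a+b} drives cheap $e_1,e_2$-growth up to $[p^{-2s}]^2\times[w']$ (now $p^{\binom{c}{2}}w'k$ diverges); Corollary \ref{1corols>2}(b) provides $e_3$-growth up to $[p^{-2s}]^2\times[L]$ with per-layer failure probability $e^{-\Omega(p^{-2s})}$, which dominates $L$ since $\alpha_s<2s$; and a final application of Lemma \ref{regimer<a+b} reaches $[L]^3$. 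Step (c) is the usual disjoint-translate argument: tile $[L]^3$ with $\lfloor L^3/|R|\rfloor$ disjoint copies of $R$ and note that the probability none is internally filled is at most $(1-\p_p(I^\bullet(R)))^{L^3/|R|}\to 0$ when $\Gamma>C$.

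The main obstacle is the droplet-formation step. In the $r>a+b$ regime treated in Proposition \ref{upperweird>2}, one takes $w$ polynomial in $p^{-1}$ so that every factor produced by Lemma \ref{regimer>a+b} is essentially $1$ after $h$ iterations, and the total cost is carried by the initial seed. Here the corresponding exponent $\binom{c}{2}$ in Lemma \ref{regimer<a+b} is too large compared to $\alpha_s$ for $p^{\binom{c}{2}}wk$ to stay bounded below throughout $k\in[c,h]$ without forcing $w\gg p^{-\alpha_s}$, which would make the initial cost $p^{c^2 w}$ exceed the target. The correct strategy is therefore to accept the small-$x$ regime and sum $|\log(p^{\binom{c}{2}}wk)|=\Theta(\log\tfrac{1}{p})$ against the $h=\Theta(p^{-\alpha_s}(\log\tfrac{1}{p})^{1/(t+2)})$ growth steps, producing the precise exponent $(t+3)/(t+2)$ in the final bound.
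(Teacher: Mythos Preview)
Your proof is correct and follows essentially the same approach as the paper's own argument: the paper takes the critical droplet $R=[h]^2\times[c]$ with the same $h=C_sp^{-\alpha_s}(\log\tfrac1p)^{1/(t+2)}$, plants $[c]^3\subset A$, grows to $R$ via Lemma~\ref{regimer<a+b} at total cost $p^{O(h)}=\exp\bigl(-O(p^{-\alpha_s}(\log\tfrac1p)^{(t+3)/(t+2)})\bigr)$, and then (Lemma~\ref{mimics>2}) grows $e_3$ via Corollary~\ref{1corols>2}(a) to $w'=p^{-\binom{c}{2}+\alpha_s-\delta}$, then $e_1,e_2$ to side $p^{-s}$, then $e_3$ to $L$ via Corollary~\ref{1corols>2}(b), then $e_1,e_2$ to $L$. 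Your choices $w=C_s$ in place of $c$, ``any prescribed polynomial'' $w'$, and intermediate side $p^{-2s}$ instead of $p^{-s}$ are cosmetic variants of the same scheme.
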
 

The following result gives us the size of a critical droplet $R_1$.
\begin{lemma}\label{mimics>2}
Set $L=\exp\left(\Gamma p^{-\alpha_s}(\log \frac 1p)^{(t+3)/(t+2)}\right)$, 
$h=C_sp^{-\alpha_s}(\log \frac 1p)^{1/(t+2)}$ for some large constant $C_s$, and 
\[R_1:=[h]^2\times 
[c].\] Conditionally on $I^\bullet(R_1)$,
the probability that $[L]^3$ is internally filled goes to $1$, as $p\to 0$.
\end{lemma}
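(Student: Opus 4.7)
The plan is to carry the critical droplet $R_1 = [h]^2 \times [c]$ up to $[L]^3$ by a chain of intermediate rectangles, alternating growth in the $e_3$-direction and in the $e_1,e_2$-directions. Concretely, fix a constant $M > \binom{c}{2}$ and define
\[
R_2 := [h]^2 \times [p^{-M}], \quad R_3 := [p^{-s}]^2 \times [p^{-M}], \quad R_4 := [p^{-s}]^2 \times [L], \quad R_5 := [L]^3,
\]
so that $R_1 \subset R_2 \subset R_3 \subset R_4 \subset R_5$ and it suffices to bound each successive conditional probability.

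First I would grow $R_1 \to R_2$ along the $e_3$-direction. For each new slab $[h]^2 \times \{w+1\}$, every vertex already has its $c$ infected neighbours in the $-e_3$-direction, so the induced 2D process is $\mathcal{N}_s^{a,b}$, which is dominated by $\mathcal{N}_s^{s,s}$ since $s \le a \le b$. By Corollary \ref{1corols>2}(a) with $k = h = C_s p^{-\alpha_s}(\log \tfrac 1p)^{1/(t+2)}$, each slab fills with probability at least
\[
1 - \exp\!\bigl(-\Omega(p^{\alpha_s(t+2)} h^{t+2})\bigr) \ = \ 1 - p^{\Omega(C_s^{t+2})}.
\]
A union bound over the $\le p^{-M}$ slabs succeeds with probability $1-o(1)$ provided $C_s$ is chosen large enough in terms of $M$.

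Second, I would grow $R_2 \to R_3$ in the $e_1$- and $e_2$-directions via Lemma \ref{regimer<a+b}: each increment fails with probability at most $2\exp(-\Omega(p^{\binom{c}{2}} w h))$, where $wh \ge p^{-M - \alpha_s + o(1)}$ and $M > \binom{c}{2}$, so the exponent is a positive power of $p^{-1}$ and the at most $p^{-s}$ increments succeed with probability $1-o(1)$. Third, grow $R_3 \to R_4$ along $e_3$: now the cross-section has side $\ge p^{-s}$, so Corollary \ref{1corols>2}(b) yields per-slab success probability $\ge 1 - \exp(-\Omega(p^{-s}))$; since $\alpha_s < s$ by definition of $t_s$, we have $\log L = O(p^{-\alpha_s}(\log\tfrac 1p)^{(t+3)/(t+2)}) \ll p^{-s}$, so the union bound over $L$ slabs still gives $1-o(1)$. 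Finally, grow $R_4 \to R_5$ laterally using Lemma \ref{regimer<a+b} again; each increment has failure probability $e^{-\Omega(p^{\binom{c}{2}} L p^{-s})}$, which easily survives $L$ applications.

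The main obstacle is the first step, where the delicate balance in the choice of $h$ is essential. The exponent $h^{t+2}$ in Corollary \ref{1corols>2}(a) is dictated by the definition of $\alpha_s$ and $t = t_s$ via \eqref{valueoft}; choosing $h = C_s p^{-\alpha_s}(\log\tfrac 1p)^{1/(t+2)}$ converts the per-slab failure into $p^{\Omega(C_s^{t+2})}$, which is exactly what is needed to absorb a polynomial-in-$p^{-1}$ number of slabs and thereby reach the scale where cheap lateral growth takes over. Once past that scale, the remaining steps are essentially automatic and follow the same template as the proof of Proposition \ref{upperweird>2}.
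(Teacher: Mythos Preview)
Your proof is correct and follows essentially the same four-step chain $R_1 \to R_2 \to R_3 \to R_4 \to [L]^3$ as the paper, using the same tools (Corollary~\ref{1corols>2} for growth in $e_3$ and Lemma~\ref{regimer<a+b} for lateral growth). The only cosmetic difference is that the paper takes the $e_3$-length of $R_2$ to be $p^{-\binom{c}{2}+\alpha_s-\delta}$ rather than your cruder $p^{-M}$ with $M>\binom{c}{2}$, but both choices work once $C_s$ is taken large enough.
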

\begin{proof}
	Consider the rectangles $R_2\subset R_3\subset R_4\subset R_5:= [L]^3$ containing $R_1$, defined by
	\[R_2:=[h]^2\times [p^{-{c\choose 2}+\alpha_s-\delta}],\ \ \ \ \ R_3:=[p^{-s}]^2\times [p^{-{c\choose 2}+\alpha_s-\delta} 
	],\ \ \ \ \ R_4:=[p^{-s}]^2\times [L].\]
As usual,
	\[\p_p(I^\bullet([L]^3)|I^\bullet(R_1))\ge\prod_{k=1}^{4} \p_p(I^\bullet(R_{k+1})|I^\bullet(R_k)).\]
	By applying (\ref{growmidIneq}) and Corollary \ref{1corols>2} (a) we obtain
	
	\begin{align*}
	\p_p(I^\bullet(R_{2})|I^\bullet(R_1)) & \ge \left(1-e^{-\Omega\left(p^{\alpha_s(t+2)}h^{t+2}\right)}\right)^{p^{-{c\choose 2}+\alpha_s-\delta}} 
	 = \left(1-e^{-\Omega\left(C_s^{t+2}\log\frac 1p\right)}\right)^{p^{-{c\choose 2}+\alpha_s-\delta}} \\
	& \ge \left(1-p^{C_s}\right)^{p^{-{c\choose 2}+\alpha_s-\delta}} 
	\to 1,
	\end{align*}
if $C_s$ is large,	and
\begin{align*}
	\p_p(I^\bullet(R_{3})|I^\bullet(R_2)) & \ge 	\left(1-e^{-\Omega\big(p^{{c\choose 2}}\cdot p^{-{c\choose 2}+\alpha_s-\delta}
	\cdot h\big)}\right)^{2p^{-s}}
	\ge \left(1-e^{-p^{-\delta/2}}\right)^{2p^{-s}}
	\to 1,
	\end{align*}
and now we can use item (b) to get
	\begin{align*}
	\p_p(I^\bullet(R_{4})|I^\bullet(R_3)) &\ge \left(1-e^{-\Omega( p^{-s})}\right)^{L}
\to 1,
	\end{align*}
since $\alpha_s<s$. Finally, by  (\ref{growmidIneq}) it is clear that $\p_p(I^\bullet(R_{5})|I^\bullet(R_4))\to 1$.
\end{proof}

The proof of Proposition \ref{upper2s>2} is straightforward.

\begin{proof}[Proof of Proposition \ref{upper2s>2}]
	Set $L=\exp\left(\Gamma p^{-\alpha_s}(\log \frac 1p)^{(t+3)/(t+2)}\right)$, where $\Gamma$ is a constant to be chosen. Consider the rectangle
	\[R:=\left[C_sp^{-\alpha_s}(\log \tfrac 1p)^{1/(t+2)}\right]^2\times [c]\subset[L]^3.\]
	 As usual, it is enough to show that there exists a constant $C'>0$ such that
	\begin{equation}\label{spcd'>2}
	\p_p(I^\bullet(R))\ge \exp\left(-C' p^{-\alpha_s}(\log \tfrac 1p)^{(t+3)/(t+2)}\right),
	\end{equation}
We fill $R$ in the same way as before: start with $[c]^3\subset A$, and then grow from $R_k=[k]^2\times[c]$ to $R_{k+1}$, for $k=c,\dots, m:=C_sp^{-\alpha_s}(\log \tfrac 1p)^{1/(t+2)}$

\begin{align*}
   \p_p\left(I^\bullet(R)\right) 
   &\ge p^{c^3}\prod_{k=c}^m
	\left(1-e^{-\Omega(p^{{c\choose 2}}k)}\right)^{2} 
	\ge p^{c^3+c^2m}
	 \ge \exp\left(-C' p^{-\alpha_s}(\log \tfrac 1p)^{(t+3)/(t+2)}\right),
\end{align*}
for $C'>c^3$, and we are done.
\end{proof}

Finally, we deal with the cases that involve integration
 of the functions \[f_d(x)=-\log\big(1-e^{-x^d}\big).\]
 We first consider the isotropic case.
\subsubsection*{Case  $c=b=a$ and $r\in\{c+3,\dots, 2c\}$}
In this section, we consider $\mathcal N_{c+s}^{c,c,c}$-bootstrap percolation with $3\le s\le c$. 
We will prove the following.
\begin{prop}\label{upperccc>2}
	Consider $\mathcal N_{c+s}^{c,c,c}$-bootstrap percolation.  There exists a constant $\Gamma>0$
	such that, if
	\begin{equation}
	L=\exp\left(\Gamma p^{-\alpha_s}\right),
	\end{equation}
	then $	\p_p\left(I^\bullet([L]^3)\right)\to 1,\  as\ p\to 0.$
\end{prop}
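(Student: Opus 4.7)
The proof follows closely the scheme used in Proposition \ref{upper=} for the case $r \le a+c$, simplified by the isotropy $a=b=c$ which lets us treat all three axis directions on equal footing. Set $L := \exp(\Gamma p^{-\alpha_s})$ for a constant $\Gamma$ to be chosen large, fix a small $\varepsilon > 0$, and take as critical droplet the cube
\[R := \bigl[p^{-\alpha_s(1+\varepsilon)}\bigr]^3 \subset [L]^3.\]
Partitioning $[L]^3$ into $\sim L^3/|R|$ disjoint copies of $R$ and applying the usual union bound, it suffices to prove: \textup{(i)} conditional on $I^\bullet(R)$, the event $I^\bullet([L]^3)$ holds with probability $1 - o(1)$; and \textup{(ii)} $\p_p(I^\bullet(R)) \ge \exp(-Cp^{-\alpha_s})$ for some constant $C > 0$. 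Then choosing $\Gamma > C$ completes the argument.

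Assertion \textup{(i)} is established exactly as in Lemma \ref{lemagrow}, applying Corollary \ref{corosp} symmetrically in each of the three axis directions (by isotropy, the same bound holds in every direction). Once the droplet reaches side length at least $p^{-s}$, Corollary \ref{1corols>2}\textup{(b)} gives a per-step probability of the form $1-\exp(-\Omega(\text{side}))$, which comfortably survives the growth up to side length $L$.

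For \textup{(ii)} we build $R$ in stages, mimicking the computation in Proposition \ref{upper=}. Let $l := p^{-\varepsilon}$, $l_k := k l$, and consider the nested cubes $R_k := [l_k]^3$ for $k=1,\ldots,n$, with $l_n$ of order $p^{-\alpha_s(1+\varepsilon)}$. Take $R_1 \subset A$ (contributing $p^{|R_1|}$, whose logarithm is $O(p^{-3\varepsilon}\log(1/p)) = o(p^{-\alpha_s})$ provided $3\varepsilon < \alpha_s$), and then grow $R_k \to R_{k+1}$ by three single-axis extensions of length $l$. By Corollary \ref{corosp}, for the unique integer $m = m(k)$ with $p^{-m} < l_k \le p^{-(m+1)}$,
\[\p_p\bigl(I^\bullet(R_{k+1}) \mid I^\bullet(R_k)\bigr) \ge \left(1 - \exp\bigl(-\Omega\bigl(l_k^{\,s-m+1}\,p^{\sum_{i=m+1}^s i}\bigr)\bigr)\right)^{3l}.\]
Grouping the factors by regime $m \in \{0,1,\ldots,s-1\}$ and performing for each $m$ the change of variables $z := l_k\,p^{\beta_m}$ with $\beta_m := \sum_{i=m+1}^s i\,/\,(s-m+1)$, the logarithm of the total product becomes $-\sum_m \Omega(p^{-\beta_m})\int_0^\infty [-\log(1-e^{-z^{s-m+1}})]\,dz$, a sum of convergent integrals.

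The main obstacle is verifying that $\max_{0 \le m \le s-1}\beta_m = \alpha_s$, with the maximum attained precisely at the critical regime $m = s - t - 1$ (where $t = t_s$). A direct computation gives
\[\beta_m = \frac{(s-m)(s+m+1)}{2(s-m+1)},\]
and combined with the defining property $\alpha_s < s - t$ of $t = t_s$, this identifies the maximum as claimed (and recovers the formula $\alpha_s = (t+1)(2s-t)/(2(t+2))$ at the critical $m$). Consequently, the dominant contribution to the total exponent is $-\Omega(p^{-\alpha_s})$, yielding \textup{(ii)}. Unlike in Propositions \ref{upper2s>2} and \ref{upperweird>2}, the three-fold symmetry here eliminates any step of growing a thin slab over a long $e_3$-direction, which is precisely why no logarithmic factor appears in the final bound.
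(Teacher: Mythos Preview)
Your proof is correct and follows essentially the same route as the paper: grow a cube through the successive regimes governed by Lemma~\ref{MainLemmafor>2}/Corollary~\ref{corosp}, integrate the per-step costs, and check that the dominant exponent is $\alpha_s$. Your unified $\beta_m$ computation (verifying $\max_m \beta_m = \beta_{s-t-1} = \alpha_s$, with $\beta_{s-t-2}=\alpha_s$ exactly in the integer case $s=(t+1)(t+4)/2$) streamlines what the paper does by case analysis---the paper takes the droplet at the larger scale $[\varepsilon p^{-(s-t)}]^3$, uses a trivial per-step bound $p^{C_s}$ up to side $p^{-(s-t-2)}$, and then integrates explicitly only over the two critical regimes $m=s-t-2$ and $m=s-t-1$---but the substance is the same.
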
 
The induced process in all three directions is coupled by $\mathcal N_s^{s,s}$-bootstrap percolation, and recall that (Corollary \ref{1corols>2}) for $k\ge p^{-s}$
    \begin{equation}\label{intfill>last}
\p_p(I^{\bullet}([k]^2)) \ge 
  1-\exp\left(-\Omega\left(k\right)\right).
\end{equation}
This time we need to use the full strength of Lemma \ref{MainLemmafor>2}, which corresponds to all sizes $k\ge p^{-(s-t)}$.

\begin{coro}\label{2corol>2}
Consider $\mathcal N_s^{s,s}$-bootstrap percolation and $m\in \{s-t,\dots,s-1\}$. Suppose that $\varepsilon p^{-m}\le k< \varepsilon p^{-(m+1)}$, then 
\begin{equation}
\p_p(I^{\bullet}([k]^2)) \ge 
  1-\exp\left(-\Omega\left(p^{-\delta(s,m)/2}\right)\right),
\end{equation}
where $\delta(s,m):=-m^2+(2s+3)m-s(s+1)>0$.
\end{coro}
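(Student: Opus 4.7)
The plan is to apply Lemma \ref{MainLemmafor>2} in its sharp regime \eqref{intfillingprob''} to the square $[k]^2$, i.e., with $l=k$. The hypothesis $\varepsilon p^{-m}\le k<\varepsilon p^{-(m+1)}$ supplies both conditions needed by \eqref{intfillingprob''} (namely $k=\Omega(p^{-m})$ and $k\le(2/3)p^{-(m+1)}$, provided we fix $\varepsilon<2/3$). Substituting $l=k$ yields
\begin{equation*}
\p_p(I^{\bullet}([k]^2))\ge 1-\exp\!\left(-\Omega\!\left(k^{\,s-m+1}\,p^{\sum_{i=m+1}^{s} i}\right)\right).
\end{equation*}

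Next, since $s-m+1\ge 1$, the quantity $k^{s-m+1}p^{\sum_{i=m+1}^s i}$ is minimized (in our range for $k$) at the lower endpoint $k=\varepsilon p^{-m}$, so it suffices to estimate it there. Using $\sum_{i=m+1}^{s}i=(s-m)(s+m+1)/2$, the exponent of $p$ becomes
\begin{equation*}
-m(s-m+1)+\tfrac{(s-m)(s+m+1)}{2}=\tfrac{m^{2}-(2s+3)m+s(s+1)}{2}=-\tfrac{\delta(s,m)}{2},
\end{equation*}
which is exactly the claimed decay rate. The constant $\varepsilon^{\,s-m+1}$ can be absorbed into the $\Omega(\cdot)$.

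The only genuine check—where one has to reconcile the statement with the definition of $t_s$—is positivity of $\delta(s,m)$ throughout $m\in\{s-t,\dots,s-1\}$. Writing $m=s-j$ with $j\in\{1,\dots,t\}$ and expanding gives $\delta(s,s-j)=2s-j(j+3)$, so we need $j(j+3)<2s$ for every such $j$. Since $\delta(s,s-j)$ is decreasing in $j$, this reduces to the case $j=t$, which is precisely the defining inequality \eqref{valueoft} (the condition $(|S_0|+\cdots+|S_t|)/(t+2)<s-t$ rearranges to $t(t+3)<2s$). Hence $\delta(s,m)>0$ for all allowed $m$, and the corollary follows.

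No real obstacle is expected: the result is just Lemma \ref{MainLemmafor>2} specialized to squares, rewritten in a form that makes the exponent explicit as a function of $s$ and $m$ so that later integrations (of the functions $f_d$) in the proof of Proposition \ref{upperccc>2} can be carried out cleanly dyadic-scale by dyadic-scale.
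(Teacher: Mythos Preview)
Your proof is correct and follows essentially the same approach as the paper: apply Lemma~\ref{MainLemmafor>2} with $l=k$, compute the resulting exponent of $p$, and verify $\delta(s,m)>0$ on the given range of $m$. The only cosmetic difference is that the paper checks positivity via the quadratic formula (rewriting $\delta(s,m)>0$ as $2m>2s+3-\sqrt{9+8s}$ and invoking the closed form of $t_s$), whereas you substitute $m=s-j$ and invoke the combinatorial form of \eqref{valueoft}; the two are equivalent.
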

\begin{proof}
By setting $l=k$ and applying Lemma \ref{MainLemmafor>2} we get
\begin{align*}
\p_p(I^{\bullet}([k]^2)) & \ge 
1-\exp\left(-\Omega\left( p^{-sm+m^2+m}p^{s(s+1)/2-m(m+1)/2}\right)\right) =   1-\exp\left(-\Omega\left(p^{-\delta(s,m)/2}\right)\right).
\end{align*}
The inequality $\delta(s,m)>0$ is equivalent to
\[2s+3-\sqrt{9+8s}<2m,\]
which holds whenever $m\ge s-t$.
\end{proof}

Now, we can set the size of a rectangle that will grow w.h.p.
\begin{lemma}
	Set $L=\exp(\Gamma p^{-\alpha_s})$
	and $R_1:=[\varepsilon p^{-(s-t)}]^3$. Conditionally on $I^\bullet(R_1)$,
	the probability that $[L]^3$ is internally filled goes to $1$ as $p\to 0$.
\end{lemma}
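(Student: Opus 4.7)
The plan is to grow $R_1$ to $[L]^3$ through a telescoping chain of cubes $R_1 \subset R_2 \subset \cdots \subset R_N = [L]^3$, where the side length of $R_j$ increases through the critical scales $p^{-(s-t)}, p^{-(s-t+1)}, \dots, p^{-s}$, and finally to $L$. At each step we add one layer to one of the six faces of the current cube; the probability of success of this step factors through the probability that the newly added $2$-dimensional face of the cube is internally filled under the induced process, which, because $c\ge s$, is dominated by $\mathcal N_s^{s,s}$-bootstrap percolation. This lets me apply Corollaries~\ref{1corols>2} and~\ref{2corol>2} directly to each face.

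First I partition the growth into two phases. In the mesoscopic phase the side length $k$ of the current cube satisfies $\varepsilon p^{-m}\le k<\varepsilon p^{-(m+1)}$ for some $m\in\{s-t,\dots,s-1\}$: by Corollary~\ref{2corol>2}, the probability that any given new face is internally filled is at least $1-\exp(-\Omega(p^{-\delta(s,m)/2}))$ with $\delta(s,m)>0$. Adding all layers from $\varepsilon p^{-m}$ up to $\varepsilon p^{-(m+1)}$ contributes at most $O(p^{-(m+1)})$ faces, so by a union bound the combined failure probability along this sub-phase is bounded by $O(p^{-(m+1)})\exp(-\Omega(p^{-\delta(s,m)/2}))\to 0$, since a negative power of $p$ beats $\log(1/p)$. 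Summing over the finitely many values $m=s-t,\dots,s-1$ handles all mesoscopic steps.

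In the macroscopic phase the side length is at least $p^{-s}$, and we must grow to $L$. Here Corollary~\ref{1corols>2}(b) applies, giving that each new face of side $k\ge p^{-s}$ is internally filled with probability at least $1-e^{-\Omega(k)}\ge 1-e^{-\Omega(p^{-s})}$. The number of such layers required is $O(L)$, so the total failure probability is bounded by $L\cdot e^{-\Omega(p^{-s})}$, which tends to zero precisely because $\log L=\Gamma p^{-\alpha_s}$ with $\alpha_s<s-t\le s$, so $\log L \ll p^{-s}$.

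Combining the two phases by a final union bound yields $\p_p\bigl([L]^3\subset \langle A\rangle\,\big|\,I^\bullet(R_1)\bigr)\to 1$. The only delicate point is choosing the widths of the intermediate cubes so that, when a new face is added, it actually sees enough infected neighbours from the previous cube to couple the induced $2$-dimensional rule with $\mathcal N_s^{s,s}$; this is handled by insisting that every cube in the chain have side at least $c$ (which is automatic from $R_1$ onward since $\varepsilon p^{-(s-t)}\gg c$), and that we add one slab at a time in only one of the three coordinate directions. The main technical obstacle is keeping track of the mesoscopic sub-phases simultaneously: the worst of the bounds $p^{-\delta(s,m)/2}$ governs the effective rate, but the positivity of $\delta(s,m)$ for $m\ge s-t$ (established in Corollary~\ref{2corol>2}) is exactly what makes the telescoping argument succeed, and it is the reason why $\varepsilon p^{-(s-t)}$ is the correct starting scale.
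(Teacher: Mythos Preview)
Your proof is correct and follows essentially the same approach as the paper's own proof: a telescoping product of conditional probabilities split into a mesoscopic phase (side lengths between $\varepsilon p^{-(s-t)}$ and $p^{-s}$, handled scale by scale via Corollary~\ref{2corol>2}) and a macroscopic phase (side lengths $\ge p^{-s}$, handled via Corollary~\ref{1corols>2}(b)). The paper's argument is just a terser version of yours, writing the bound directly as a product $\bigl(1-e^{-\Omega(p^{-s})}\bigr)^L\prod_{m=s-t}^{s-1}\bigl(1-e^{-\Omega(p^{-\delta(s,m)/2})}\bigr)^{p^{-(m+1)}}$ and observing that each factor tends to $1$.
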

\begin{proof}
	By (\ref{intfill>last}) and Corollary \ref{2corol>2} we have
	\begin{align*}
	\p_p\left(I^\bullet([L]^3)|I^\bullet(R_1)\right) & \ge 
		  \left( 1-e^{-\Omega\left(p^{-s}\right)}\right)^L
		  \prod_{m=s-t}^{s-1} \prod_{h=\varepsilon p^{-m}}^{p^{-(m+1)}} \p_p(I^\bullet([h+1]^3)|I^\bullet([h]^3))
\\
	  & \ge 	  \exp\left(-2Le^{-\Omega\left(p^{-s}\right)}\right)
	  \prod_{m=s-t}^{s-1} \left( 1-e^{-\Omega(p^{-\delta})}\right)^{p^{-(m+1)}},
	\end{align*}
where $\delta=\delta(s,m)/2>0$, and every factor goes to 1, as $p\to 0$. 
\end{proof}
Now, we are ready to show the upper bound.
\begin{proof}[Proof of Proposition \ref{upperccc>2}]
	Set $L=\exp(\Gamma p^{-\alpha_s})$, where $\Gamma$ is a constant to be chosen. Consider the rectangle
	\[R:=[\varepsilon p^{-(s-t)}]^3 \subset[L]^3.\]
As before, we only need to show that there is a constant $C'>0$, such that
\begin{equation}
	\p_p(I^\bullet(R))\ge \exp(-C' p^{-\alpha_s}).
	\end{equation}
Recall that
\[s-t-1\le \alpha_s<s-t.\]
It is enough to consider the (hardest) case $s-t-1=\alpha_s$, since we can use the same idea to deduce the case $s-t-1<\alpha_s$ (indeed, fewer steps are needed).

Remarkably, when $\alpha_s=s-t-1$ we can apply Lemma \ref{MainLemmafor>2} one more time for $m=s-t-2$ to get the lower bound needed to obtain right exponents. More precisely, since $\alpha_s$ is integer, by \eqref{alfa_sinteger} we know that
\[s=\frac{(t+1)(t+4)}{2},\]
thus, under $\mathcal N_s^{s,s}$-bootstrap percolation, if $\varepsilon p^{-(s-t-2)}=K_1 \le k < K_2= \varepsilon p^{-(s-t-1)}$,

\begin{align*}
\p_p(I^{\bullet}([k]^2)) & \ge 
  1-\exp\left(-\Omega\left(k^{s-m+1}p^{\sum_{i=s-t-1}^si}\right)\right)
 = 1-\exp\left(-\Omega\left( k^{t+3}p^{\frac 12(t+1)(t+2)(t+3)}\right)\right)\\
& \ge   1-\exp\left(-\Omega\left( k^{t+3}p^{\alpha_s(t+3)}\right)\right).
\end{align*}
While, by Corollary \ref{1corols>2} we already computed the following matching ratio in the exponents: if $\varepsilon p^{-(s-t-1)}= K_2 \le k <K_3= \varepsilon p^{-(s-t)}$,
\[
\p_p(I^{\bullet}([k]^2)) 
 \ge  1-\exp\left(-\Omega\left( k^{t+2}p^{\alpha_s(t+2)}\right)\right).
\]
On the other hand, by the discussion we had in the heuristics, we know that the is a lower bound which holds for $all$ values of $k$, namely,

\begin{align*}
    \p_p(I^{\bullet}([k]^2)) &
 \ge 1-\exp\left(-\Omega(p^{|B_s|}k^2)\right) \ge p^{C_s},
\end{align*}
for some large constant $C_s>0$.
This implies for $R_1:=[p^{-(s-t-2)}]^3$  that
\[\p_p(I^\bullet(R_1))\ge \p_p([c]^3\subset A)\prod_{h=c}^{K_1} p^{3C_s}\ge p^{c^3+3C_sp^{-(s-t-2)}}.\]

Finally, by setting $R_2:=[p^{-(s-t-1)}]^3$ we obtain 
	\begin{align*}
	\p_p(I^\bullet(R)) & \ge \p_p(I^\bullet(R_1))\p_p(I^\bullet(R_2)|I^\bullet(R_1))\p_p(I^\bullet(R)|I^\bullet(R_2))\\
	& \ge \p_p(I^\bullet(R_1)) \prod_{k=K_1}^{K_2}\left(1-e^{-\Omega\left( k^{t+3}p^{\alpha_s(t+3)}\right)}\right)^3\prod_{k=K_2}^{K_3} \left(1-e^{-\Omega\left( k^{t+2}p^{\alpha_s(t+2)}\right)}\right)^3 \\
	& \ge  \p_p(I^\bullet(R_1))\exp\left(3
	\int_{0}^{\infty}\log(1-e^{-\Omega\left( z^{t+3}p^{\alpha_s(t+3)}\right)})\,dz\right)\\
	&  \hspace{6cm} \times  \exp\left(3
	\int_{0}^{\infty}\log(1-e^{-\Omega\left( z^{t+2}p^{\alpha_s(t+2)}\right)})\,dz\right)
	\end{align*}

	\begin{align*}
	\\
	&\ge
	e^{-p^{-\alpha_s+1/2}} \exp\left(\tilde{C}p^{-\alpha_s}
	\left(
	\int_{0}^{\infty}\log(1-e^{-y^{t+3}})\,dy
+
\int_{0}^{\infty}\log(1-e^{-y^{t+2}})\,dy
	\right)\right)\\
	& \ge \exp\left(-C'p^{-\alpha_s}\right),
	\end{align*}
	for some constants $\tilde{C},C'>0$. 
\end{proof}

\subsubsection*{Case $c=b>a$}
In this section we cover the last case $c=b>a$. Consider the families
\[\mathcal N_{c+s}^{a,c,c}.\]
The corresponding upper bound goes as follows.
\begin{prop}
	Consider $\mathcal N_{c+s}^{a,c,c}$-bootstrap percolation with $c>a$. There exists a constant $\Gamma>0$ such that, if 
	\begin{equation}
	L=\exp\left(\Gamma p^{-\alpha_s}(\log \tfrac{1}{p})^{(t+1)/(t+2)}\right),
\end{equation}
then  $\p_p(I^\bullet([L]^3))\to 1,\ as\ p\to 0.$
\end{prop}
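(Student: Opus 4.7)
The plan is to follow the same three-step scheme used in Propositions \ref{upper2s>2} and \ref{upperccc>2}: first, identify a critical droplet $R_1$ of suitable (anisotropic) dimensions; second, show that conditionally on $I^\bullet(R_1)$ the cube $[L]^3$ is internally filled with probability tending to $1$; and third, lower-bound $\p_p(I^\bullet(R_1))$ by $\exp\bigl(-C' p^{-\alpha_s}(\log \tfrac 1p)^{(t+1)/(t+2)}\bigr)$. Taking $\Gamma$ large and tiling $[L]^3$ with disjoint copies of $R_1$ then gives the conclusion.

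Since the only anisotropy is along $e_1$ (both other widths equal $c$), I would take a mildly anisotropic droplet of the form
\[R_1 := [\varepsilon p^{-(s-t)}] \times [h]^2,\]
where the cross-sectional side $h$ is slightly larger than the isotropic scale $\varepsilon p^{-(s-t)}$ by a factor of order $(\log \tfrac 1p)^{1/(t+2)}$, tuned so that the final $\log$-exponent comes out to $(t+1)/(t+2)$. The two induced planar processes on the faces perpendicular to $e_2$ and $e_3$ are both $\mathcal N_s^{a,c}$; since $s\le a\le c$, these are dominated by $\mathcal N_s^{s,s}$, so Lemma \ref{MainLemmafor>2} and Corollaries \ref{1corols>2} and \ref{2corol>2} apply to them unchanged. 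The face perpendicular to $e_1$ is governed by the supercritical family $\mathcal N_{c+s-a}^{c,c}$ and is handled by Lemma \ref{regimer<a+b}.

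The growth step $I^\bullet(R_1) \Rightarrow I^\bullet([L]^3)$ will follow the template of Proposition \ref{upperccc>2}: first stretch $R_1$ in the $e_3$-direction via Corollary \ref{2corol>2} while the cross-section lies in the range $\varepsilon p^{-(s-t)}\le k<\varepsilon p^{-s}$; then switch to Corollary \ref{1corols>2}(b) once $k\ge\varepsilon p^{-s}$; and finally close up the remaining $e_1$- and $e_2$-extent using Lemma \ref{regimer<a+b}. For the lower bound on $\p_p(I^\bullet(R_1))$, I would grow iteratively from $[c]^3 \subset A$ through the successive cubic scales $k=c,\dots,\varepsilon p^{-(s-t)}$ and then through the inflated rectangular scales up to $R_1$, applying Lemma \ref{MainLemmafor>2} at each dyadic scale $\varepsilon p^{-m}\le k<\varepsilon p^{-(m+1)}$ for $m=s-t-1,\dots,s-1$. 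As in Proposition \ref{upperccc>2}, the resulting products telescope into finite integrals of the form $\int_0^\infty \log\bigl(1-e^{-z^{t+2}}\bigr)\, dz$, producing the $p^{-\alpha_s}$ prefactor.

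The main obstacle is pinning down the precise exponent $(t+1)/(t+2)$ on $\log \tfrac 1p$. My expectation is that this factor arises from the cost of extending the cross-section of $R_1$ beyond the isotropic scale $\varepsilon p^{-(s-t)}$: the $e_1$-side $\varepsilon p^{-(s-t)}$ is too short for the $\mathcal N_s^{a,c}$-induced growth on the $e_1\times e_2$- and $e_1\times e_3$-faces to succeed with constant probability, so the $e_2$- and $e_3$-sides must be inflated by a factor $(\log \tfrac 1p)^{1/(t+2)}$. A careful bookkeeping of this extra cost (in the spirit of Lemma \ref{mimics>2} combined with the FKG estimate used in Proposition \ref{upperccc>2}) should contribute exactly $(\log \tfrac 1p)^{1-1/(t+2)}=(\log \tfrac 1p)^{(t+1)/(t+2)}$ multiplying $p^{-\alpha_s}$ in the exponent, strictly smaller than the $(\log \tfrac 1p)^{(t+3)/(t+2)}$ factor in the $b<c$ case and strictly larger than the $0$-exponent in the fully isotropic case $c=b=a$, reflecting the intermediate anisotropy of the present regime.
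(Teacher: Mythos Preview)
Your three-step scheme is right, but the droplet shape and the mechanism producing the exponent $(t+1)/(t+2)$ are both different from the paper's, and your version has a genuine gap in the cost estimate.

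In the paper the critical droplet is
\[
R=\bigl[p^{-\alpha_s}(\log \tfrac 1p)^{-1/(t+2)}\bigr]\times\bigl[\varepsilon p^{-s}\bigr]^2,
\]
so the $e_1$-side is \emph{shorter} than the isotropic scale (not longer), while the $e_2,e_3$-sides are pushed all the way to $\varepsilon p^{-s}$. The exponent $(t+1)/(t+2)$ arises from the $e_1$-growth cost alone: the induced $e_1$-face process is $\mathcal N_{c+s-a}^{c,c}$, which (since $c>a$) is strictly harder than $\mathcal N_s^{s,s}$, and at the scales in play each $e_1$-step has probability only $p^{O(1)}$. Hence the total $e_1$-cost is $p^{O(l_0)}=\exp\bigl(-O(l_0\log\tfrac 1p)\bigr)=\exp\bigl(-O(p^{-\alpha_s}(\log\tfrac 1p)^{(t+1)/(t+2)})\bigr)$, and the choice $l_0=p^{-\alpha_s}(\log\tfrac 1p)^{-1/(t+2)}$ is made precisely to hit this target. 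The $e_2,e_3$-growth (handled by the rectangular version of Corollary~\ref{2corol>2}, valid for $w^{1-\gamma}\le l\le w$) contributes only a smaller term.

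Your plan instead grows cubically to $[\varepsilon p^{-(s-t)}]^3$. But this requires about $p^{-(s-t)}$ steps in the $e_1$-direction while the $e_2,e_3$-face remains at scale $\le\varepsilon p^{-(s-t)}$, which is far below the critical scale $p^{-\alpha_{c+s-a}}$ of the harder $e_1$-face family (for generic $c-a$). The best available lower bound from Lemma~\ref{MainLemmafor>2} then gives each $e_1$-step probability only $p^{\Omega(1)}$, so the $e_1$-cost is at most $\exp\bigl(-\Omega(p^{-(s-t)}\log\tfrac 1p)\bigr)$. Since $s-t>\alpha_s$ strictly, this exceeds the target $\exp\bigl(-O(p^{-\alpha_s}(\log\tfrac 1p)^{(t+1)/(t+2)})\bigr)$ by a genuine polynomial factor in the exponent. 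Inflating the $e_2,e_3$-sides by $(\log\tfrac 1p)^{1/(t+2)}$ does nothing to repair this: it affects the $e_2,e_3$-cost, which was never the bottleneck. The fix is exactly to shrink the $e_1$-extent to $p^{-\alpha_s}(\log\tfrac 1p)^{-1/(t+2)}$ and enlarge the $e_2,e_3$-extent to $\varepsilon p^{-s}$, as the paper does.
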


The proof is a combination of all ideas we have already seen, thus, we will only sketch it.
\begin{proof}[Sketch of the proof]
By following the proof of Corollary \ref{2corol>2}, we can see that under $\mathcal N_s^{s,s}$-bootstrap percolation, if $m\ge s-t$, then there exists a constant $\gamma>0$ such that, for all $\varepsilon p^{-m}\le w< \varepsilon p^{-(m+1)}$ and $w^{1-\gamma}\le l\le w$,
\begin{equation}
\p_p(I^{\bullet}([l]\times[w])) \ge 
  1-\exp\left(-\Omega\left(p^{-\delta/2}\right)\right),
\end{equation}
where $\delta= \delta(s,m,\gamma):=-m^2+(2s+3-\gamma)m-s(s+1)>0$ (apply continuity as $\gamma\to 0$).
This implies that the rectangle
\[R=\left[p^{-\alpha_s}(\log \tfrac 1p)^{-1/(t+2)}\right]\times\left[\varepsilon p^{-(r-c)}\right]^2\]
is internally filled with probability at least

\begin{equation}
	\p_p(I^\bullet(R))\ge \exp(-C p^{-\alpha_s}(\log \tfrac{1}{p})^{(t+1)/(t+2)}),
	\end{equation}
and $R$ can grow with high probability.
\end{proof}

\section{Future work}
In dimension $d=3$, a problem which remains open is the determination of the threshold for $c+3\le r\le b+c$ and $c<a+b$, and we think that the order of the critical length shoud be given by Proposition \ref{twoout4} and Conjecture \ref{targetgeneralc>b}.
We believe that the techniques used in \cite{DB20} and here can be adapted to cover these cases 
(though significant technical obstacles remain).

For dimensions $d\ge 4$, it is an open problem to determine $ L_c(\mathcal N_r^{a_1,\dots,a_d},p)$ for all values $a_1\le \cdots \le a_d$ and all $r\in\{a_d+1,\dots,a_d+ a_{d-1}\}$.
\section*{Acknowledgements}
The author would like to thank Janko Gravner and Rob Morris for their stimulating conversations on this project, and their many invaluable suggestions. 





\bibliographystyle{plain}
\bibliography{References}
\end{document}